\newcommand\myitem[1][]{%
  \item[#1]\def\@currentlabel{#1}%
}
\newcommand{\abs}[1]{\ensuremath{\left|#1\right|}}
\newcommand{\halmaz}[1]{\ensuremath{\left\{\,#1\,\right\}}}
\newcommand{\halmazvonal}[2]{\ensuremath{\left\{\,#1\mid #2\,\right\}}}
\newcommand{\cpx}[1]{\ensuremath{\#_G\left( #1 \right)}}
\newcommand{\Green}{\mathcal}
\theoremstyle{plain}
\newtheorem{thm}{Theorem}
\newtheorem{lem}[thm]{Lemma}
\newtheorem{cor}[thm]{Corollary}
\theoremstyle{definition}
\newtheorem{dfn}[thm]{Definition}
\newtheorem{prob}{Problem}
\begin{document}
\title[Maximal subgroups and complexity of the flow semigroup]{The maximal subgroups and the complexity of the flow semigroup of finite (di)graphs}

\dedicatory{Dedicated to John Rhodes on the occasion of his 80th birthday.}

\author{G\'{a}bor Horv\'{a}th}
\address{Institute of Mathematics, 
University of Debrecen,
Pf.~400,
Debrecen,
4002,
Hungary}
\email{ghorvath@science.unideb.hu}

\author{Chrystopher L.~Nehaniv}
\address{Royal Society / Wolfson Foundation Biocomputation Research Laboratory,
Centre for Computer Science and Informatics Research, 
University of Hertfordshire,
College Lane, Hatfield,
Hertfordshire AL10 9AB, United Kingdom}
\email{C.L.Nehaniv@herts.ac.uk}

\author{K\'{a}roly Podoski}
\address{Alfr\'{e}d R\'{e}nyi Institute of Mathematics, 
13--15 Re\'{a}ltanoda utca, 
Budapest, 
1053,
Hungary}
\email{pcharles@renyi.mta.hu}

\thanks{
The research was partially supported by the European Council under the European Union's Seventh Framework Programme (FP7/2007-2013)/ERC under grant agreements no.~318202 and no.~617747,
by the MTA R\'{e}nyi Institute Lend\"{u}let Limits of Structures Research Group, 
the first author was partially supported by the Hungarian 
National Research, Development and
Innovation Office (NKFIH) grant no.~K109185
\ and grant no.~FK124814,
and the third author was funded by the National Research, Development and Innovation Office (NKFIH) Grant No. ERC\_HU\_15 118286.
}

\date{30 July 2017}

\subjclass[2010]{20M20, 05C20, 05C25, 20B30}

\keywords{Rhodes's conjecture, flow semigroup of digraphs, Krohn--Rhodes complexity, complete invariants for graphs, invariants for digraphs, permutation groups}

\begin{abstract}
The flow semigroup, introduced by John Rhodes, 
is an invariant for digraphs and a complete invariant for graphs. 
After collecting together previous partial results,
we refine and prove Rhodes's conjecture on the structure of the maximal groups in the flow semigroup 
for finite, antisymmetric, strongly connected digraphs. 

Building on this result, 
we investigate and fully describe the structure and actions of the maximal subgroups of the flow semigroup acting on all but $k$ points for all finite digraphs and graphs for all $k\geq 1$.  
A linear algorithm (in the number of edges) is presented to 
determine 
these so-called `defect $k$ groups' for any finite (di)graph.

Finally, 
we prove that the complexity of the flow semigroup of a 2-vertex connected (and strongly connected di)graph with $n$ vertices is $n-2$, 
completely confirming Rhodes's conjecture for such (di)graphs. 
\end{abstract}

\maketitle

\section{Introduction}\label{sec:intro}

John Rhodes in \cite{wildbook} introduced the {\em flow semigroup}, an invariant for graphs and digraphs
(that is, 
isomorphic flow semigroups correspond to isomorphic digraphs). 
In the case of graphs, this is a complete invariant determining the graph up to isomorphism.
The flow semigroup is the semigroup of transformations of the vertices generated by elementary collapsings corresponding to
the edges of the (di)graph.  
An elementary collapsing corresponding to the directed edge $uv$ is a map on the vertices moving $u$ to $v$ and acting as the identity on all other vertices. 
(See Section~\ref{sec:digraphs} for all the precise definitions.) 

 A maximal subgroup of this semigroup  for a finite (di)graph $D=(V_D, E_D)$ acts by permutations on all but $k$  of its vertices  ($1 \leq k \leq \abs{V_D}-1$)  and is called
a ``defect $k$ group''.  
The set of defect $k$ groups of a (di)graph is also an invariant. 
For each fixed $k$, they are all
isomorphic to each other in the case of (strongly) connected (di)graphs.   
Rhodes formulated a conjecture on the structure of these groups for 
strongly connected digraphs whose edge relation is anti-symmetric 
in \cite[Conjecture~6.51i (2)--(4)]{wildbook}.
We show that his conjecture was correct, and we prove it here in sharper form. 
Moreover, extending this result, 
we fully determine the defect $k$ groups for all finite graphs and digraphs.

Rhodes further conjectured \cite[Conjecture~6.51i (1)]{wildbook} 
that the Krohn--Rhodes complexity of the flow semigroup of a strongly connected, antisymmetric digraph $D$ on $n$ vertices is $n-2$. 
We confirm this conjecture 
when the digraph is 2-vertex connected, 
and bound the complexity in the remaining cases. 

The structure of the argument is as follows. 
First, 
a maximal group in the flow semigroup of a digraph $D$ is the direct product of maximal groups of the flow semigroups of its strongly connected components. 
Thus one needs only to consider strongly connected digraphs. 
It turns out, 
that if $D$ is a strongly connected digraph, 
then the defect $k$ group (up to isomorphism) does not depend on the choice of the vertices it acts on. 
Furthermore, 
for a strongly connected digraph, 
its flow semigroup is the same as the flow semigroup of the simple graph obtained by ``forgetting'' the direction of the edges. 
This is detailed in Section~\ref{sec:digraphs} and is based on \cite[p.~159--169]{wildbook}. 
Thus, 
one only needs to consider the defect $k$ groups of the flow semigroup for simple connected graphs. 

In Section~\ref{sec:prelim} we list some useful lemmas and determine the defect $k$ group of a cycle. 
In Section~\ref{sec:defect1} we prove that the defect 1 group of arbitrary simple connected graph is the direct product of the defect 1 groups of its 2-vertex connected components. 
The defect 1 group of an arbitrary 2-vertex connected graph $\Gamma$ has been determined by Wilson \cite{wilson}. 
He proved that the defect 1 group is either $A_{n-1}$ or $S_{n-1}$, 
unless $\Gamma$ is a cycle or the exceptional graph displayed in Figure~\ref{fig:exceptionalgraph}. 

\begin{figure}
\begin{center}
\begin{tikzpicture}[-,>=stealth',shorten >=1pt,auto,node distance=2cm, thin, 
 main node/.style={circle,draw},
 rectangle node/.style={rectangle,draw},
 empty node/.style={}]

  \node[rectangle node] (v)  {$v$};
  \node[main node] (4) [above left of=v] {$4$};
  \node[main node] (3) [left of=4] {$3$};
  \node[main node] (2) [below left of=3] {$2$};
  \node[main node] (5) [below right of=2] {$5$};
  \node[main node] (6) [right of=5] {$6$};
  \node[main node] (1) [left of=v] {$1$};

  \path[every node/.style={font=\sffamily\small}]
    (v) edge node {} (1)
    (1) edge node {} (2)
    (2) edge node {} (3)
    (3) edge node {} (4)
    (4) edge node {} (v)
    (5) edge node {} (2)
    (5) edge node {} (6)
    (6) edge node {} (v);
\end{tikzpicture}
\caption{Exceptional graph}\label{fig:exceptionalgraph}
\end{center}
\end{figure}
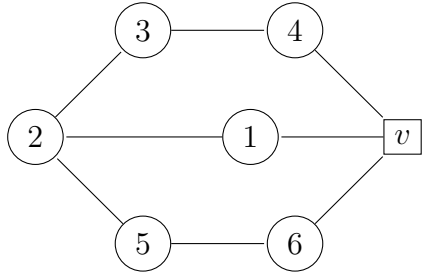

In particular, 
Rhodes's conjecture 
(as phrased for strongly connected, 
antisymmetric digraphs in \cite[Conjecture~6.51i~(2)]{wildbook}) 
about the defect 1 group holds, 
and more generally: 
the defect 1 group of the flow semigroup of a simple connected graph is indeed the product of cyclic, 
alternating and symmetric groups of various orders. 
A straightforward linear algorithm is given
to determine the direct components of the defect 1 group of an arbitrary connected graph
(see Section~\ref{sec:algorithm}). 

In Section~\ref{sec:defectk} we determine the defect $k$ groups ($k \geq 2$) of arbitrary graphs by
considering the so-called maximal $k$-subgraphs
(maximal subgraphs for which the defect $k$ group is the full symmetric group) 
and prove that the defect $k$ group of a graph is the direct product of the defect $k$ groups of the maximal $k$-subgraphs (i.e.\ of full symmetric groups).
In Section~\ref{sec:algorithm} we provide a linear algorithm (in the number of edges of $\Gamma$) to determine the maximal $k$-subgraphs of an arbitrary connected graph. 
Finally, 
in Section~\ref{sec:cpx} we confirm \cite[Conjecture~6.51i (1)]{wildbook} about the Krohn--Rhodes complexity of digraphs when the digraph is 2-vertex connected, 
and we prove some bounds on the complexity of the flow semigroup in the remaining cases. 
(See Section~\ref{sec:cpx} for the definition of Krohn--Rhodes complexity.)

We have collected all these results into the following main theorem. 

\begin{thm}\label{thm:main}\ 
\begin{enumerate}
\item\label{it:digraph}
Let $D$ be a digraph, 
then every maximal subgroup of $S_D$ is (isomorphic to) the direct product of maximal subgroups of $S_{D_i}$, 
where the $D_i$ are the strongly connected components of $D$. 
\item\label{it:stronglyG_k}
Let $D$ be a strongly connected digraph. 
Let $V_k, V_k' \subseteq D$ be subsets of nodes such that $\abs{V_k} = \abs{V_k'}=k$. 
Let $G_{k, V_k}, G_{k, V_k'}$ be the defect $k$ groups acting on $V \setminus V_k$ and $V \setminus V_k'$, respectively. 
Then $G_{k, V_k} \simeq G_{k, V_k'}$
as permutation groups. 
\myitem[(\getrefnumber{it:stronglyG_k}\textsuperscript{r})]\label{it:stronglyS_D}
Let $D$ be a strongly connected digraph, 
and $\Gamma_D$ be the graph obtained from $D$  by forgetting the direction of the edges in $D$.
Then $S_D = S_{\Gamma_D}$. 
\item\label{it:graph}
Let $\Gamma$ be a simple connected graph of $n$ vertices, 
and let $\Gamma_1, \dots , \Gamma_m$ be its 2-vertex connected components. 
Then the defect 1 group of $\Gamma$ is the direct product of the defect 1 groups of $\Gamma_i$ ($1\leq i\leq m$). 
\item\label{it:2-vertex}
Let $\Gamma$ be a 2-vertex connected simple graph with $n \geq 2$ vertices. 
Then the defect 1 group of $\Gamma$ is isomorphic (as a permutation group) to
\begin{enumerate}
\item\label{it:defect1cycle}
the cyclic group $Z_{n-1}$ if $\Gamma$ is a cycle; 
\item
$S_5 \simeq PGL_2(5)$ acting sharply 3-transitively on 6 points, 
if $\Gamma$ is the exceptional graph (see Figure~\ref{fig:exceptionalgraph});
\item 
$S_{n-1}$ or $A_{n-1}$, otherwise, 
where the defect 1 group is $A_{n-1}$ if and only if $\Gamma$ is bipartite.
\end{enumerate}
\myitem[(\getrefnumber{it:2-vertex}\textsuperscript{c})]\label{it:compl2vertex}
Let $\Gamma$ be a 2-vertex connected simple graph with $n \geq 2$ vertices. 
Then the complexity of $S_\Gamma$ is $\cpx{S_{\Gamma}}=n-2$. 
\myitem[(\getrefnumber{it:2-vertex}\textsuperscript{cc})]\label{it:compl2edge}
Let $\Gamma$ be a 2-edge connected simple graph with $n \geq 2$ vertices. 
Then for the complexity of $S_\Gamma$ we have $n-3 \leq \cpx{S_{\Gamma}} \leq n-2$. 
\item\label{it:defectk}
Let $k \geq 2$, 
$\Gamma$ be a simple connected graph of $n$ vertices, 
$n > k$. 
\begin{enumerate}
\item\label{it:defectkcycle}
If $\Gamma$ is a cycle, 
then its defect $k$ group is the cyclic group $Z_{n-k}$. 
\item\label{it:defectkmain}
Otherwise, let
$\Gamma_1, \dots , \Gamma_m$ be the maximal $k$-subgraphs of $\Gamma$, 
and let $\Gamma_i$ have $n_i$ vertices. 
Then the defect $k$ group of $\Gamma$ is the direct product of the defect $k$ groups of $\Gamma_i$ ($1 \leq i\leq m$), 
thus it is isomorphic (as a permutation group) to 
\[
S_{n_1-k} \times \dots \times S_{n_m-k}. 
\]
\end{enumerate}
\end{enumerate}
\end{thm}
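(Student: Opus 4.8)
The plan is to assemble the statement from a sequence of reductions, each handled in its own section, so that the final theorem becomes a bookkeeping exercise once the component results are in place. I would begin with the reduction machinery of parts~\ref{it:digraph}, \ref{it:stronglyG_k}, and~\ref{it:stronglyS_D}. For part~\ref{it:digraph}, the idea is that a maximal subgroup sits around an idempotent whose image meets each strongly connected component $D_i$ in a fixed set; since elementary collapsings can only move a vertex ``forward'' along an edge and no vertex can return to a component it has left, the bijective action attached to the idempotent must permute the surviving vertices within each $D_i$ independently, giving the direct product. For part~\ref{it:stronglyS_D}, strong connectivity lets one simulate the reverse collapsing of a directed edge $uv$ by following a directed path from $v$ back to $u$, so the generators with or without orientation produce the same semigroup; this is the point at which the problem passes from digraphs to the underlying simple graph, following \cite[p.~159--169]{wildbook}. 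Part~\ref{it:stronglyG_k} then follows because strong connectivity supplies transformations carrying any $k$-subset of vertices to any other, conjugating one defect $k$ group onto another as permutation groups.

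With the digraph case reduced to simple connected graphs, I would turn to the defect~$1$ statements~\ref{it:graph} and~\ref{it:2-vertex}. For part~\ref{it:graph}, I would use the block decomposition of a connected graph into its $2$-vertex connected components glued at cut vertices: after removing a single vertex, the permutations achievable inside distinct blocks are independent, so the defect~$1$ group splits as the advertised direct product. Part~\ref{it:2-vertex} is essentially Wilson's classification \cite{wilson} of the groups of graph puzzles on $2$-connected graphs, which yields $Z_{n-1}$ for a cycle, the sporadic $PGL_2(5)$ for the exceptional graph, and otherwise $S_{n-1}$ or $A_{n-1}$; the bipartite dichotomy comes from the parity of the cycles used to generate the moves, with bipartite graphs forcing even permutations and hence $A_{n-1}$.

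For the defect $k$ groups with $k \geq 2$ in part~\ref{it:defectk}, the cycle case~\ref{it:defectkcycle} is a direct computation analogous to the defect~$1$ cycle. The substance is~\ref{it:defectkmain}, where I would introduce the maximal $k$-subgraphs --- the maximal subgraphs realizing the full symmetric group at defect $k$ --- and prove that the defect $k$ group is the direct product of the symmetric groups they carry. The key new phenomenon compared with defect~$1$ is that, for $k \geq 2$, the extra ``free'' vertices always allow the construction of an odd permutation, so the alternating group never occurs and every factor is a full symmetric group; establishing that the maximal $k$-subgraphs act independently and jointly exhaust the group is the main combinatorial content here.

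The hardest part, I expect, will be the complexity statements~\ref{it:compl2vertex} and~\ref{it:compl2edge}. The upper bound $\cpx{S_\Gamma} \leq n-2$ should follow from an explicit Krohn--Rhodes style decomposition that peels off one group layer per defect level, using the descending chain of defect $k$ groups supplied by the earlier parts. The lower bound is where the real work lies: one must show that the chain of nontrivial groups occurring at defects $1, 2, \dots, n-2$ cannot be compressed, i.e.\ that any admissible wreath product decomposition requires at least $n-2$ nonaperiodic factors. Here the large alternating and symmetric factors from parts~\ref{it:2-vertex} and~\ref{it:defectk} are essential, since their presence obstructs any collapse of adjacent layers. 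In the $2$-edge connected case~\ref{it:compl2edge}, the absence of $2$-vertex connectivity can cost one group layer in the lower bound argument, which is exactly why the estimate weakens to $n-3 \leq \cpx{S_\Gamma} \leq n-2$.
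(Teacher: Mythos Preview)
Your treatment of parts~\ref{it:digraph}--\ref{it:2-vertex} and~\ref{it:defectkcycle} is essentially the paper's: the reductions to the undirected, strongly connected, and then $2$-vertex connected case are all cited from \cite{wildbook}, part~\ref{it:graph} is indeed proved by observing that every cycle through the defect point lies in a single block (the paper states this via Lemma~\ref{lem:cyclepermutation}), and part~\ref{it:2-vertex} is Wilson's theorem.

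For part~\ref{it:defectkmain} you have the right objects (maximal $k$-subgraphs) and the right inclusion (each contributes a symmetric factor), but you are missing the obstruction that bounds the defect~$k$ group from above. The paper's key lemma here is a \emph{bridge} argument (Lemma~\ref{lem:bridge}): if two pieces of the graph are joined only by a path of length at least~$k$, then no defect~$k$ permutation can move a vertex from one side to the other. This is what forces the group to be no larger than the direct product of the symmetric groups on the maximal $k$-subgraphs; without it you have only the lower inclusion. Your remark about odd permutations explains why alternating factors do not appear, but not why the group splits.

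The complexity items~\ref{it:compl2vertex} and~\ref{it:compl2edge} are where your plan departs substantively from the paper and where there is a genuine gap. For the upper bound the paper does not build a decomposition at all; it simply observes $S_\Gamma \subseteq K_n$ and quotes $\cpx{K_n}=n-2$ from \cite{wildbook}. More importantly, your lower bound idea---that the chain of defect groups ``cannot be compressed'' because the factors are large symmetric or alternating groups---is not a technique that yields Krohn--Rhodes lower bounds: complexity is not monotone in the subgroup lattice in that way, and large simple factors alone do not force many layers. The paper instead uses a specific tool from \cite{ComplexityLowerBounds}: it builds a noncombinatorial $T_1$-subsemigroup $T=\langle G_1\cup\{e_{uv}e_{xy}\}\rangle$, applies the inequality $\cpx{T}\geq 1+\cpx{EG(T)}$, and then shows (using the $2$-transitivity of $G_1$ from part~\ref{it:2-vertex}) that the idempotent-generated subsemigroup $EG(T)$ maps onto the flow semigroup of a graph on $n-1$ vertices, closing an induction. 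For item~\ref{it:compl2edge} the paper does not rerun this argument; it uses part~\ref{it:defectk} to get that the defect~$2$ group is $S_{n-2}$ and then a short lemma that this alone forces $\cpx{S_\Gamma}\geq n-3$ via the full transformation semigroup $F_{n-2}$. Your sketch would need to be replaced by this machinery to go through.
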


Our main contribution to Theorem~\ref{thm:main} are items~(\ref{it:graph}), \ref{it:compl2vertex}, \ref{it:compl2edge} and (\ref{it:defectk}). 
Items~(\ref{it:digraph}), (\ref{it:stronglyG_k})~and~\ref{it:stronglyS_D} (among some basic definitions and notations) are detailed in Section~\ref{sec:digraphs} and are based on \cite[p.~159--169]{wildbook}. 
In Section~\ref{sec:prelim} we list some useful lemmas and determine the defect $k$ group of a cycle. 
Item~(\ref{it:graph}) is proved in Section~\ref{sec:defect1}, 
while item~(\ref{it:2-vertex}) has already been proved by Wilson \cite{wilson}. 
Then in Section~\ref{sec:defectk} we prove item~(\ref{it:defectk}). 
In Section~\ref{sec:algorithm} we provide a linear algorithm (in the number of edges of $\Gamma$) to determine the maximal $k$-subgraphs of an arbitrary connected graph to help putting item~(\ref{it:defectk}) more into context. 
Finally, 
items~\ref{it:compl2vertex}~and~\ref{it:compl2edge} are proved in Section~\ref{sec:cpx}. 

East, Gadouleau and Mitchell \cite{mitchell} are currently looking into other properties of flow semigroups. 
In particular, 
they provide a linear algorithm (in the number of vertices of a digraph) for whether or not the flow semigroup contains a cycle of length $m$ for a fixed positive integer $m$. 
Furthermore, 
they classify all those digraphs whose flow semigroups have any of the following properties: 
inverse, completely regular, commutative, simple, 0-simple, 
a semilattice, a rectangular band, 
congruence-free, 
is $\Green{K}$-trivial or $\Green{K}$-universal, where $\Green{K}$ is any of Green's $\Green{H}$-, $\Green{L}$-, $\Green{R}$-, or $\Green{J}$-relation, 
and when the flow-semigroup has a left, right, or two-sided zero. 

Rhodes's original conjecture \cite[Conjecture~6.51i]{wildbook} is about strongly connected, 
antisymmetric digraphs. 
By \cite{Robbins1939} 
a strongly connected antisymmetric digraph becomes a 2-edge connected graph after forgetting the directions. 
Therefore Theorem~\ref{thm:main} almost completely settles Rhodes's conjecture \cite[Conjecture 6.51i]{wildbook}. 
To completely settle the last remaining part of Rhodes's conjecture \cite[Conjecture 6.51i (1)]{wildbook}, 
one should find the complexity of the flow semigroups for the rest of the 2-edge connected graphs.

\begin{prob}
Determine the complexity of $S_{\Gamma}$ for a 2-edge connected graph $\Gamma$ which
is not 2-vertex connected.
\end{prob}

The smallest such graph is the ``bowtie'' graph:

\begin{prob}
Let $\Gamma$ be the  graph with vertex set $\halmaz{u,v,w,x,y}$ and edge set 
$\halmaz{uv,vw,wu,wx,xy,yw}$.
Determine the complexity of $S_{\Gamma}$. 
\end{prob}

Ultimately, the goal is the determine the complexity for all flow semigroups.
\begin{prob}
Determine the complexity of $S_{\Gamma}$ for an arbitrary finite graph (or digraph) $\Gamma$.
\end{prob}

\section{Flow semigroup of digraphs}\label{sec:digraphs}

For notions in graph theory we refer to \cite{GraphTheory, HandbookOfCombinatorics}, 
in group theory to \cite{Robinson}
in permutation groups to \cite{Cameron, DixonMortimer}, 
in semigroup theory to \cite{CliffordPreston1, CliffordPreston2}. 

A {\em semigroup} is a set with a binary associative multiplication. 
A {\em transformation} on a set $X$ is  a function $s \colon X\rightarrow X$.  
It {\em operates} (or {\em acts}) {\em on} $X$ by mapping each $x\in X$ to some $x\cdot s \in X$.   Here we write $x\cdot s$ or $xs$ for transformation $s$ applied to $x\in X$.  
A {\em transformation semigroup} $S$  is a set of transformations $s \in S$ on some set $X$ such that $S$ is closed under (associative)
 function composition.   Also, $S$ itself is then said to operate or {\em to act on} the set $X$. Note that in this paper functions act on the right,  therefore transformations are multiplied from left to right.  Denoting by $ss'$ the transformation of $X$ obtained by first applying $s$ and then $s'$, 
we have $x \cdot ss' = (x \cdot s) \cdot s'$.
If a semigroup element $s$ acts on a set $X$, 
and for some $Y \supseteq X$ the action of $s$ is not defined on $Y \setminus X$, 
then we may consider $s$ acting on $Y$, 
as well, 
with the identity action on $Y \setminus X$.

 A {\em permutation group} is a nonempty transformation semigroup $G$ that contains only permutations and such that
that if  $g \in G$ then the inverse permutation $g^{-1}$ is also in $G$.   Furthermore, 
for a set $Y \subseteq  X$ and a transformation $s$ on $X$ define
\[
Ys = \halmazvonal{ys}{y \in Y}. 
\]
A {\em subgroup} $G$ of a transformation semigroup $S$ is a subset of $S$ whose transformations satisfy the (abstract) group axioms. It is not hard to show that
 if $S$ is a transformation semigroup acting on $X$, then
$G$ contains a (unique) idempotent $e^2=e$ (which does not generally act as the identity map on $X$), and
furthermore distinct elements  of 
$G$ when restricted to $X e$ are distinct,  permute $Xe$, and comprise a permutation group acting on $Xe$ (see \cite[p.~49]{wildbook}).

A {\em digraph} $(V, E)$ is a set of \emph{nodes} (or \emph{vertices}) $V$, and a binary relation $E \subseteq V\times V$.
An element  $e=(u,v) \in E$ is called a {\em directed edge} from node $u$ to node $v$, and also denoted $uv$. A {\em loop-edge} is an edge from a vertex to itself. 
A {\em graph} $(V,E)$ is a set of nodes $V$ and a symmetric binary relation $E \subseteq V \times V$.
If $(u,v) \in E$, 
then $uv$ is called an (undirected) edge. Such a graph is called {\em simple} if it has no loop-edges.
\emph{In this paper we consider only digraphs without loop-edges and simple graphs}.
A \emph{walk} is a sequence of vertices $\left( v_1, \dots, v_n \right)$ such that $v_iv_{i+1}$ is a (directed) edge  for all $1\leq i\leq n-1$.
By \emph{cycle} we will mean a simple cycle,
that is a closed walk with no repetition of 
vertices except for the starting and ending vertex.
A \emph{path} is a walk with no repetition of 
vertices. 
A (di)graph $\Gamma = (V, E)$ is (strongly) connected if there is a path from $u$ to $v$ for all distinct $u, v \in V$. 
By \emph{subgraph} $\Gamma' = (V', E') \subseteq \Gamma$ we mean a graph for which $V' \subseteq V$,
$E' \subseteq E$.
If $\Gamma'$ is an \emph{induced subgraph},
that is $E'$ consists of all edges from $E$ with both endpoints in $V'$,
then we explicitly indicate it.
A strongly connected component of a digraph $\Gamma$ is a maximal strongly connected subgraph of $\Gamma$.

\bigskip

For a digraph $D = (V_D, E_D)$ without any loop-edges, the \emph{flow semigroup} $S = S_D$ is the semigroup of transformations acting on $V_D$ defined by 
\[
S = S_D = \left<e_{uv} \mid uv \in E_D \right>,
\]
where $e_{uv}$ is the \emph{elementary collapsing} corresponding to the directed edge $uv \in E_D$, 
that is, 
for every $x \in V_D$ we have 
\[
x\cdot e_{uv} = xe_{uv} = 
\begin{cases}
v, & \text{if } x=u, \\
x, & \text{otherwise.}
\end{cases}
\]
Thus, the flow semigroup of a (di)graph $D$ is generated by idempotents (elementary collapsings) corresponding to the edges of $D$. 
The flow semigroup $S_D$ is also called the {\em Rhodes semigroup of the (di)graph}.

\bigskip

A maximal subgroup of $S_D$ is a subgroup that is not properly contained in any other subgroup of $S_D$.
In order to determine the maximal subgroups of $S_D$,
one can make several reductions by \cite[Proposition~6.51f]{wildbook}.
First, 
one only needs to consider the maximal subgroups of $S_{D_i}$ for the strongly connected components $D_i$ of $D$. 
Strongly connected components are maximal induced subgraphs such that any vertex can be reached from any other vertex by a directed path.

\begin{lem}[{\cite[Proposition~6.51f (1)]{wildbook}}]
Let $D$ be a digraph, 
then every maximal subgroup of $S_D$ is (isomorphic to) the direct product of maximal subgroups of $S_{D_i}$, 
where the $D_i$ are the strongly connected components of $D$. 
\end{lem}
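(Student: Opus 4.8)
The plan is to analyze how the generators $e_{uv}$ of $S_D$ interact across distinct strongly connected components. The key structural observation is that an elementary collapsing $e_{uv}$ corresponding to an edge $uv$ either lies within a single strongly connected component (when $u$ and $v$ belong to the same component), or crosses between components. Since strongly connected components can be topologically ordered (the condensation of $D$ is acyclic), a crossing edge $uv$ always goes from an ``earlier'' component to a ``later'' one, never back. I would first establish this ordering and note that any product of generators can move vertices only ``forward'' between components, so no element of $S_D$ can realize a nontrivial permutation that mixes vertices of distinct components.

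Next I would identify an arbitrary maximal subgroup $G$ of $S_D$ with its idempotent $e$, recalling from the preliminaries that $G$ acts faithfully as a permutation group on the image $V_D e$. The central claim is that $V_D e$ decomposes as a disjoint union $\bigcup_i (V_{D_i} \cap V_D e)$, and that $G$ respects this decomposition: each $g \in G$ permutes the vertices lying in $V_{D_i}$ among themselves, because the forward-only nature of the generators forbids any permutation from carrying a vertex of $D_i$ into a different component $D_j$ (such a move could never be undone by the inverse $g^{-1}$, contradicting that $g$ is a permutation of $V_D e$). From this I would obtain a natural injective homomorphism $G \hookrightarrow \prod_i \mathrm{Sym}(V_{D_i} \cap V_D e)$ by restricting each $g$ to the component pieces.

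I would then show this embedding has image exactly a product of maximal subgroups of the $S_{D_i}$, and that it is surjective onto such a product. The forward direction gives that the projection of $G$ to each factor is a subgroup $G_i$ of (the permutation action of) $S_{D_i}$; since the component restrictions act independently—a generator inside $D_i$ acts as the identity on all other components—one can reassemble elements, so $G \simeq \prod_i G_i$ and each $G_i$ is forced to be maximal by maximality of $G$. The main obstacle I expect is proving that the component actions are genuinely independent, i.e.\ that any combination of maximal-subgroup elements from the individual components can be simultaneously realized by a single element of $S_D$; this requires carefully tracking how a word in the generators acts component-by-component and verifying that choices in one component do not constrain another, which hinges on the fact that intra-component generators fix vertices outside their component and that the idempotent $e$ can be taken compatibly across components.
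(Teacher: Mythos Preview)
The paper does not give its own proof of this lemma; it is quoted from \cite[Proposition~6.51f(1)]{wildbook} and used as a black box, so there is no argument in the paper to compare your proposal against.

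Your outline follows the natural route, and the first half is solid: since the condensation of $D$ is acyclic, every element of $S_D$ sends a vertex of $D_i$ into some $D_j$ with $j \geq i$ in a topological linear extension, and a downward induction on $i$ shows that any $g \in G$ permutes each $A_i = V_D e \cap V_{D_i}$ separately. This yields the embedding $G \hookrightarrow \prod_i \operatorname{Sym}(A_i)$.

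The second half, which you correctly flag as the obstacle, is where a full proof must do its work, and your sketch does not yet supply it. Two specific points deserve attention. First, you assert that the projection $G_i$ is a subgroup of $S_{D_i}$, but an element $g \in G$ is a word that may involve cross-component collapsings, and it is not automatic that its restriction to $A_i$ is realized by an element of $S_{D_i}$; nor is it automatic that $A_i$ is the image of an idempotent of $S_{D_i}$, since $e$ may send vertices of $D_i$ into later components. Second, for surjectivity onto $\prod_i G_i$ and for maximality of each $G_i$, you want to produce, for each $i$, elements of $G$ acting as a prescribed $g_i$ on $A_i$ and trivially on the other $A_j$. This uses that $S_{D_i}$ embeds in $S_D$ (its generators fix $V_D \setminus V_{D_i}$ pointwise), but you still have to reconcile the idempotent of such an embedded subgroup with the idempotent $e$ of $G$. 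Your phrase ``the idempotent $e$ can be taken compatibly across components'' names exactly the issue without resolving it.
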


This is (\ref{it:digraph}) of Theorem~\ref{thm:main}. 
An element $s \in S$ is \emph{of defect $k$} if $\abs{V_D s} = \abs{V_D}-k$. 
Let $V_k=\halmaz{ v_1,v_2,\dots , v_k} \subseteq V_D$.
The \textit{defect $k$ group} $G_{k,V_k}$ associated to $V_k$ (called the \emph{defect set})
is generated by all elements of $S$ restricted to $V_D\setminus V_k$ which permute the elements of $V_D\setminus V_k$
and move elements of $V_k$ to elements of $V_D\setminus V_k$:
\[
G_{k, V_k}=\left< s\restriction_{V_D\setminus V_k}\, : \, s\in S, (V_D\setminus V_k)s=V_D\setminus V_k, V_k s \subseteq V_D\setminus V_k \right>,
\]
where $s\restriction_{V_D\setminus V_k}$ denotes the restriction of the transformation $s$ onto the set $V_D\setminus V_k$. 
Now, $G_{k, V_k}$ is a permutation group acting on $V_D\setminus V_k$. 
For this reason $V_D\setminus V_k$ is called the \emph{permutation set} of $G_{k, V_k}$, 
and the elements of $G_{k, V_k}$ are sometimes called \emph{defect $k$ permutations}.
Furthermore, 
if the defect set contains only one vertex $v$,
then by abuse of notation we write \emph{defect $v$} or \emph{defect point $v$} instead of defect $\halmaz{v}$. 
In general, the defect $k$ group $G_{k, V_k}$ can depend on the choice of $V_k$. 
However, 
by \cite[Proposition~6.51f (2)]{wildbook} it turns out that if the graph is strongly connected
then the defect $k$ group $G_k$ is unique up to isomorphism.

\begin{lem}[{\cite[Proposition~6.51f (2)]{wildbook}}]\label{lem:stronglyconnected}
Let $D$ be a strongly connected digraph. 
Let $V_k, V_k' \subseteq V_D$ be subsets of nodes such that $\abs{V_k} = \abs{V_k'}=k$. 
Then the action of $G_{k, V_k}$ on $V_D \setminus V_k$ is equivalent to that of $G_{k, V_k'}$ on $V_D \setminus V_k'$.
That is, 
$G_{k, V_k} \simeq G_{k, V_k'}$
as permutation groups.
\end{lem}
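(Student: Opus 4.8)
The plan is to prove the assertion in its stronger, permutation-equivalence form: for any two defect sets $V_k,V_k'$ I would construct a bijection $\phi\colon V_D\setminus V_k\to V_D\setminus V_k'$ between the permutation sets together with a group isomorphism $G_{k,V_k}\to G_{k,V_k'}$ intertwining the two actions through $\phi$. Since permutation equivalence is transitive, the first step is to reduce to the case where $V_k$ and $V_k'$ differ in a single vertex. Because $D$ is strongly connected and $\abs{V_D\setminus V_k}=n-k\geq 1$, there is always a vacant vertex, and it is elementary that the (unlabelled) configuration $V_k$ can be transformed into $V_k'$ by repeatedly moving one vertex of the current defect set to an adjacent vacant vertex. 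This yields a chain $V_k=W_0,W_1,\dots,W_r=V_k'$ in which consecutive sets differ in exactly one element and the two differing vertices are joined by an edge, so it suffices to prove $G_{k,W_i}\simeq G_{k,W_{i+1}}$ for each $i$.

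So assume $V_k=U\cup\halmaz{a}$ and $V_k'=U\cup\halmaz{b}$ with $a\neq b$, $a,b\notin U$, and $a,b$ adjacent. The key observation is that the generators of $G_{k,V_k}$ are precisely the restrictions to $V_D\setminus V_k$ of those $s\in S$ whose image is exactly $V_D\setminus V_k$ and which permute $V_D\setminus V_k$; being a nonempty set of permutations of $V_D\setminus V_k$ closed under composition, these restrictions already form a group, so no generation is needed. I would then produce a \emph{connecting element} $t\in S$ that maps $V_D\setminus V_k$ bijectively onto $V_D\setminus V_k'$ (necessarily sending $b\mapsto a$ and fixing the common part $V_D\setminus(V_k\cup V_k')$), together with a return element $t'\in S$ with image $V_D\setminus V_k$ realizing the reverse bijection, chosen so that $tt'$ and $t't$ restrict to the identity on $V_D\setminus V_k$ and on $V_D\setminus V_k'$ respectively. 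Setting $\phi=t\restriction_{V_D\setminus V_k}$, the assignment $s\mapsto t'st$ carries each transformation $s$ witnessing a generator of $G_{k,V_k}$ to one witnessing a generator of $G_{k,V_k'}$: a direct computation using $V_k's\subseteq V_D\setminus V_k$ shows $\Ran(t'st)=V_D\setminus V_k'$ and $V_k'(t'st)\subseteq V_D\setminus V_k'$, and on restrictions the map is exactly conjugation by $\phi$. Hence $\phi^{-1}G_{k,V_k}\phi\subseteq G_{k,V_k'}$, and the symmetric construction (interchanging $a$ and $b$) gives the reverse inclusion, so $\phi$ realizes the desired permutation equivalence.

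The crux — and the only place where strong connectivity is genuinely used — is the construction of $t$ and $t'$. When the edge between $a$ and $b$ is present in both directions (in particular for the underlying undirected graph) one simply takes $t=e_{ba}$ and $t'=e_{ab}$, and every verification above is immediate. For a general strongly connected digraph, however, the reverse edge need not exist, so a single elementary collapsing cannot realize the bijection $b\mapsto a$ on the permutation set; I expect this to be the main obstacle. The remedy is to use strong connectivity to obtain directed paths in \emph{both} directions between $a$ and $b$ and to assemble from the associated elementary collapsings an element of $S$ of defect exactly $k$ that carries $V_D\setminus V_k$ bijectively onto $V_D\setminus V_k'$. Phrased invariantly, this is the statement that the idempotent retractions $e,f\in S$ with images $V_D\setminus V_k$ and $V_D\setminus V_k'$ lie in a common $\mathcal{D}$-class, strong connectivity being exactly what guarantees the mutual reachability needed for $e\mathrel{\mathcal{D}}f$. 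Once this is established, the standard semigroup fact that $\mathcal{D}$-equivalent idempotents have not merely isomorphic but permutation-equivalent maximal subgroups on their images — the connecting element furnishing simultaneously the bijection $\phi$ and the isomorphism $h\mapsto t'ht$ — completes the argument, and matching the generator-based definition of $G_{k,V_k}$ with this $\mathcal{H}$-class description is routine.
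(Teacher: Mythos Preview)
The paper does not supply its own proof of this lemma: it is quoted verbatim from \cite[Proposition~6.51f~(2)]{wildbook} and then used without further argument. So there is nothing to compare against, and your proposal stands or falls on its own merits.

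Your overall strategy is correct and standard. The reduction to the case $\abs{V_k\triangle V_k'}=2$ with the two differing vertices adjacent is valid (the ``token sliding'' connectivity for $k<n$ tokens on a connected graph is an easy induction on the distance between a chosen $a\in A\setminus B$ and $b\in B\setminus A$), and the conjugation argument with $t=e_{ba}$, $t'=e_{ab}$ goes through exactly as you describe. Your remark that the generating set is already a group is fine: it is a nonempty finite semigroup of permutations, hence a group; and your identification of $G_{k,V_k}$ with the restriction of the $\mathcal{H}$-class of an idempotent retraction onto $V_D\setminus V_k$ is also correct (for any witness $s$ one has $es\in H_e$ with $(es)\!\restriction=s\!\restriction$).

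Where you go astray is in identifying the ``main obstacle''. You worry that for a directed edge $a\to b$ the reverse collapsing $e_{ba}$ need not lie in $S_D$, and then reach for Green's relations to repair this. But the paper has already disposed of this issue: Lemma~\ref{lem:reverseedge} (which \emph{is} proved in the paper) says precisely that in a strongly connected digraph every edge lies on a directed cycle, so both $e_{ab}$ and $e_{ba}$ belong to $S_D$ whenever $a$ and $b$ are adjacent in either direction. Equivalently, $S_D=S_{\Gamma_D}$ and you may work in the undirected graph throughout. Once you invoke that lemma, your ``simple case'' $t=e_{ba}$, $t'=e_{ab}$ already covers the general situation, and the $\mathcal{D}$-class machinery --- while correct --- is unnecessary. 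The cleanest write-up is: reduce to $S_{\Gamma_D}$ via Lemma~\ref{lem:reverseedge}, run the token-sliding reduction in the connected undirected graph $\Gamma_D$, and conjugate by the single elementary collapsing.
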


This is (\ref{it:stronglyG_k}) of Theorem~\ref{thm:main}.
By Lemma~\ref{lem:stronglyconnected}, 
we may write $G_k$ instead of $G_{k, V_k}$ without any loss of generality.
Furthermore, 
the case of strongly connected graphs can be reduced to the case of simple graphs. 
Let $\Gamma=(V, E)$ be a simple (undirected) graph, 
we define $S_{\Gamma}$ by considering $\Gamma$ as a directed graph where every edge is directed both ways. 
Namely, 
let $D_{\Gamma}=(V,E_D)$ be the directed graph on vertices $V$ such that both $uv \in E_D$ and $vu \in E_D$ if and only if the undirected edge $uv \in E$.  
Then let $S_{\Gamma} = S_{D_\Gamma}$. 

Furthermore, 
for every digraph $D = (V_D, E_D)$, 
one can associate an undirected graph $\Gamma$ by ``forgetting'' the direction of edges in $D$. 
Precisely, 
let $\Gamma_D=(V_D,E)$ be the undirected graph such that $uv \in E$ if and only if $uv \in E_D$ or $vu \in E_D$. 
The following lemma due to Nehaniv and Rhodes shows that if a digraph $D$ is strongly connected then the semigroup
$S_D$ corresponding to $D$ and the semigroup $S_{\Gamma_D}$ corresponding to the simple graph $\Gamma_D$ are the same. 
Moreover, Lemma~\ref{lem:reverseedge} immediately implies that the transformation semigroup $S_D$ is an
invariant for digraphs and a complete invariant for (simple) graphs: That is, isomorphic digraphs have the isomorphic flow semigroups, and graphs are isomorphic if and only if their flow semigroups are isomorphic as transfromation semigroups.

\begin{lem}[{\cite[Lemma~6.51b]{wildbook}}]\label{lem:reverseedge}
Let $D$ be an arbitrary digraph. 
Then
\[
e_{ab} \in S_D \Longleftrightarrow
\begin{cases} 
a \to b \text{ is an edge in $D$, or } \\
b \to a \text{ is an edge in a directed cycle in $D$.} 
\end{cases} 
\]
In particular, 
if $D$ is strongly connected then $S_D = S_{\Gamma_D}$. 
\end{lem}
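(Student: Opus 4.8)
The plan is to prove the biconditional first; the final assertion then follows with little extra work. For the ``in particular'' statement, suppose $D$ is strongly connected and $a\to b$ is an edge. Strong connectivity gives a directed path from $b$ back to $a$, so $a\to b$ lies on a directed cycle, and the ($\Leftarrow$) direction of the biconditional yields $e_{ba}\in S_D$. Thus both elementary collapsings of every edge lie in $S_D$, so $S_{\Gamma_D}\subseteq S_D$; the reverse inclusion $S_D\subseteq S_{\Gamma_D}$ is immediate because every generator of $S_D$ is already a generator of $S_{\Gamma_D}$. Hence $S_D=S_{\Gamma_D}$.

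For the ($\Leftarrow$) direction the case in which $a\to b$ is an edge is trivial, as then $e_{ab}$ is a generator. So suppose $b\to a$ lies on a directed cycle $c_0\to c_1\to\dots\to c_{m-1}\to c_0$ with $c_0=b$ and $c_1=a$. I would exhibit the single product
\[
Q = e_{c_0c_1}\, e_{c_{m-1}c_0}\, e_{c_{m-2}c_{m-1}} \cdots e_{c_1c_2},
\]
that is, the collapsing of the edge $c_0\to c_1$ followed by the collapsings of all remaining cycle edges in the \emph{reverse} cyclic order. A direct computation shows that $Q$ fixes every vertex off the cycle, sends both $c_0$ and $c_1$ to $c_2$, and sends $c_i\mapsto c_{i+1}$ for $2\le i\le m-1$ (indices mod $m$). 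Thus $Q$ has image $V_D\setminus\halmaz{c_1}$ and permutes this image, acting as the $(m-1)$-cycle $(c_0\,c_2\,c_3\cdots c_{m-1})$ on the cycle vertices and as the identity off the cycle. Consequently $Q^{m-1}$ is idempotent and equals the identity on $V_D\setminus\halmaz{c_1}$, while tracking $c_1\mapsto c_2\mapsto\dots\mapsto c_0$ gives $c_1 Q^{m-1}=c_0$. Hence $Q^{m-1}=e_{c_1c_0}=e_{ab}\in S_D$. The only labor here is the bookkeeping that verifies the action of $Q$.

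For the ($\Rightarrow$) direction I would show that $e_{ab}\in S_D$ forces one of the edge conditions. First, following the trajectory of the point $a$ through any product $g_1\cdots g_n=e_{ab}$, each move of the token runs along an edge, so the successive positions of $a$ trace a directed walk from $a$ to $b$; hence there is a directed path $a\leadsto b$. It therefore suffices to produce a direct edge $a\to b$ or $b\to a$, since the latter together with $a\leadsto b$ yields a directed cycle through $b\to a$. The key structural observation is: if $\mu\in S_D$ has rank $\abs{V_D}-1$ and $\mu=\mu' g$ with $g$ a generator, then comparing images forces the source of $g$ to be the unique point absent from the image of $\mu$. Applied to a shortest product for $e_{ab}$ (whose absent point is $a$), this shows the last generator is some $e_{aq_1}$, giving an edge $a\to q_1$; if $q_1=b$ we are done, and otherwise, since the product was shortest, deleting this generator must leave a product representing the rank-$(\abs{V_D}-1)$ ``chain'' map that sends $a\mapsto b$, $q_1\mapsto a$, and fixes everything else.

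I would then run an induction on the length of a shortest product, with inductive hypothesis that membership in $S_D$ of any chain map $a\mapsto b,\ q_1\mapsto a,\ \dots,\ q_k\mapsto q_{k-1}$ (all other points fixed, along a directed path $a\to q_1\to\dots\to q_k$) forces an edge $a\to b$ or $b\to a$. Peeling the last generator collapses the defect point $q_k$ along an edge $q_k\to r$, and a case analysis on $r$ either exhibits one of the required edges directly or produces a strictly shorter chain map for the same pair $(a,b)$, to which the hypothesis applies; the base case is a one-letter product, which is a single generator and hence the edge $a\to b$ itself. The hard part will be exactly this case analysis: checking that each way the chain can close up either terminates in an edge $b\to a$ or shortens the product. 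This is where the global constraint that $e_{ab}$ fixes all intermediate vertices must be turned into the purely local conclusion that $a$ and $b$ are adjacent.
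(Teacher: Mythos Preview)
Your argument for ($\Leftarrow$) and for the ``in particular'' clause is correct and coincides with the paper's: your product $Q$ is precisely the bracketed expression there, and $Q^{m-1}=e_{ab}$ is the same identity.

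For ($\Rightarrow$), however, you are working on the wrong end of the word, and the induction you sketch has a genuine gap. Peeling the \emph{last} generator $e_{q_k r}$ does not always return a chain map for the pair $(a,b)$. If $r=b$, the residual $\mu'$ sends \emph{both} $a$ and $b$ to $q_k$ (and still $q_i\mapsto q_{i-1}$), which is not of your chain form; if $r=a$, the residual sends $q_1\mapsto q_k$ and cycles the $q_i$ among themselves with $a\mapsto b$, again not a chain. So the inductive hypothesis as stated does not apply in these cases, and the deferred ``case analysis'' cannot be completed without substantially enlarging the class of maps you induct over. Your closing sentence correctly identifies this as the hard part, but it is not merely bookkeeping.

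The paper avoids all of this with a one-line kernel argument, peeling from the \emph{left}. Write $e_{ab}=e_{uv}\,s$ with $e_{uv}$ the first factor. Since $u\,e_{uv}=v\,e_{uv}$, the points $u$ and $v$ have the same image under $e_{uv}s=e_{ab}$; but $e_{ab}$ identifies only the pair $\{a,b\}$, so $\{u,v\}=\{a,b\}$, and hence $a\to b$ or $b\to a$ is an edge of $D$. Combined with your (correct) trajectory observation that any product representing $e_{ab}$ traces a directed walk from $a$ to $b$, this finishes the forward implication immediately. The moral: for a rank-$(\abs{V_D}-1)$ element, the unique nontrivial fibre is read off from the \emph{first} generator, not the last.
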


\begin{proof}
Let $b \to a \to u_1 \to \dots \to  u_{n-1} \to b$ be a directed cycle in $D$.
Then an easy calculation shows that
\[
e_{ab} = \left(e_{ba}e_{u_{n-1}b}e_{u_{n-2}u_{n-1}} \dots e_{u_{1}u_2} e_{a u_1} \right)^{n}. 
\]
For the other direction, 
assume $e_{ab} = e_{uv} s$ for some $s \in S_D$.
Then $e_{uv}s$ moves $u$ and $v$ to the same vertex, 
while $e_{ab}$ moves only $a$ and $b$ to the same vertex. 
Thus $\halmaz{a,b} = \halmaz{u,v}$. 
\end{proof}

This is \ref{it:stronglyS_D} of Theorem~\ref{thm:main}. 
Therefore, in the following
we only consider simple, connected, undirected graphs $\Gamma = (V, E)$, 
that is no self-loops or multiple edges are allowed.
Furthermore, 
$\Gamma$ is 2-edge connected if removing any edge does not disconnect $\Gamma$. 
Rhodes's conjecture \cite[Conjecture~6.51i (2)--(4)]{wildbook} is about strongly connected, 
antisymmetric digraphs. 
Note 
that by \cite{Robbins1939} 
a strongly connected antisymmetric digraph becomes a 2-edge connected graph after forgetting the directions. 

Let us fix some notation.
The letters $k$, $l$, $m$ and $n$ will denote nonnegative integers.
The number of vertices of $\Gamma$ is usually denoted by $n$,
while $k$ will denote the size of the defect set.
Usually we denote the defect $k$ group of a graph $\Gamma$ by $G_k$ or $G_{\Gamma}$,
depending on the context.
We try to heed the convention of using $u$, $v$, $w$, $x$, $y$ as vertices of graphs,
$V$ as the set of vertices,
$E$ as the set of edges.
Furthermore,
the flow semigroup is mostly denoted by $S$,
its elements are denoted by $s$, $t$, $g$, $h$, $p$, $q$.
The cyclic group of $m$ elements is denoted by $Z_m$. 

We will need the notion of an open ear, 
and open ear decomposition. 

\begin{dfn}
Let $\Gamma$ be an arbitrary graph, 
and let $\Gamma'$ be a proper subgraph of $\Gamma$.
A path $(u, c_1, \dots, c_m, v)$ is called a \emph{$\Gamma'$-ear} (or \emph{open ear}) with respect to $\Gamma$, 
if $u, v\in\Gamma'$, $u\neq v$, 
and either $m=0$ and the edge $uv \notin \Gamma'$, 
or $c_1, \dots, c_m \in \Gamma \setminus \Gamma'$.
An \emph{open ear decomposition} of a graph is a partition of its set of edges into a sequence of subsets, 
such that the first element of the sequence is a cycle, 
and all other elements of the sequence are open ears of the union of the previous subsets in the sequence.
\end{dfn}

A connected graph $\Gamma$ with at least $k$ vertices is \emph{$k$-vertex connected}
if removing any $k-1$ vertices does not disconnect $\Gamma$. 
By~\cite{Whitney1932} a graph is 2-vertex connected if and only if it is a single edge or it has an open ear decomposition.

\section{Preliminaries}\label{sec:prelim}

Let $\Gamma = (V, E)$ be a simple, connected (undirected) graph, 
and for every $1\leq k\leq \abs{V}-1$, let $G_k$ denote its defect $k$ group for some $V_k \subseteq V$, 
$\abs{V_k} = k$. 
Let $S = S_{\Gamma}$ be the flow semigroup of $\Gamma$. 
The following is immediate. 

\begin{lem}[{\cite[Fact~6.51c]{wildbook}}]\label{lem:sTisdefectk}
Let $s\in S$ be of defect $k$. 
If $se_{uv}$ is of defect $k$, as well, 
then $u \notin Vs$ or $v \notin Vs$. 
\end{lem}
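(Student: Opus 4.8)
The plan is to compute the image $V(se_{uv}) = (Vs)e_{uv}$ directly and determine exactly when postcomposing with $e_{uv}$ shrinks it. Since $e_{uv}$ fixes every vertex of $\Gamma$ except $u$, which it sends to $v$, the only way the collapsing can identify two distinct points of $Vs$ is by mapping $u$ onto $v$; this can lower the cardinality only if both $u$ and $v$ already lie in $Vs$. So the whole argument reduces to a cardinality count on the set $Vs$ under the single map $e_{uv}$.

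First I would handle the case $u \notin Vs$. Here $e_{uv}$ acts as the identity on $Vs$, since $u$ is its only moved point, so $\abs{V(se_{uv})} = \abs{Vs} = n-k$ and $se_{uv}$ is automatically of defect $k$; the desired conclusion $u \notin Vs$ then holds trivially. Next, suppose $u \in Vs$, and write $W = Vs$. Then
\[
We_{uv} = \left( W \setminus \halmaz{u} \right) \cup \halmaz{v},
\]
because $u \mapsto v$ while every other vertex of $W$ is fixed. If $v \notin W$, this is a union of two disjoint parts, so $\abs{We_{uv}} = \abs{W} = n-k$, the defect is unchanged, and the conclusion $v \notin Vs$ holds. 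If instead $v \in W$, then $v \in W \setminus \halmaz{u}$ already, whence $We_{uv} = W \setminus \halmaz{u}$ and $\abs{We_{uv}} = \abs{W} - 1 = n-k-1$, so $se_{uv}$ is of defect $k+1$ rather than $k$.

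Collecting the cases, $se_{uv}$ fails to be of defect $k$ precisely when both $u \in Vs$ and $v \in Vs$. Taking the contrapositive yields the statement: if $se_{uv}$ is again of defect $k$, then $u \notin Vs$ or $v \notin Vs$. I do not expect any genuine obstacle here, as the result is essentially a one-step cardinality count for the action of a single collapsing on $Vs$; the only point requiring care is correctly tracking whether $v$ is already present in the image when $u$ is collapsed onto it, which is what separates the defect-preserving from the defect-increasing case.
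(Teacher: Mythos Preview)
Your proof is correct. The paper does not give a proof of this lemma at all, stating only that it ``is immediate''; your direct cardinality count on $(Vs)e_{uv}$ is exactly the obvious verification the paper is alluding to.
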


Furthermore, 
it is not too hard to see that every defect 1 permutation arises from the permutations generated by cycles (in the graph) containing the defect point. 

\begin{lem}[{\cite[Proposition~6.51e]{wildbook}}]\label{lem:cyclepermutation}
Let $\Gamma$ be a connected graph, 
and let $G_{1}$ denote its defect 1 group, 
such that the defect point is $v \in V$. 
Then 
\[
G_{1} = \left< \left( u_1, \dots , u_k \right) \text{ as permutation} \mid (u_1, \dots , u_k, v) \text{ is a cycle in } \Gamma \right>. 
\]
\end{lem}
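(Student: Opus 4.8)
The statement asserts $G_1 = H$, where $H := \langle\, (u_1,\dots,u_k) \mid (u_1,\dots,u_k,v) \text{ is a cycle in } \Gamma \,\rangle$, and the plan is to prove the two inclusions separately. For $H \subseteq G_1$ it suffices to check that each generating rotation lies in $G_1$. Given a cycle $(u_1,\dots,u_k,v)$ with edges $vu_1, u_1u_2, \dots, u_{k-1}u_k, u_kv$, I would form the explicit product
\[
p = e_{u_k v}\, e_{u_{k-1} u_k}\cdots e_{u_1 u_2}\, e_{v u_1} \in S,
\]
and verify by a direct trace (applying factors left to right, as functions act on the right) that $u_i p = u_{i+1}$ for $i<k$, that $u_k p = u_1$ and $vp = u_1$, and that $xp = x$ for every other vertex $x$. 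Consequently $Vp = V\setminus\{v\}$, so $p$ is of defect $1$ with defect point $v$, it satisfies $(V\setminus\{v\})p = V\setminus\{v\}$, and $p\restriction_{V\setminus\{v\}} = (u_1,\dots,u_k)$. Since $G_1$ is a group containing each such $p$, we get $H \subseteq G_1$; this direction is a routine computation.

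For $G_1 \subseteq H$, I would take $\sigma \in G_1$ realized by a defect-$1$ element $s \in S$ with defect point $v$, i.e.\ $Vs = V\setminus\{v\}$ and $s\restriction_{V\setminus\{v\}} = \sigma$, and write $s = e_{a_1 b_1}\cdots e_{a_N b_N}$. The first factor already drops the image to size $\abs{V}-1$, creating a \emph{hole} at $a_1$; and because the product stays of defect $1$, Lemma~\ref{lem:sTisdefectk} forces each subsequent factor $e_{a_i b_i}$ either to act trivially on the current image (a redundant step, which may be deleted) or to slide the hole along an edge, rerouting the preimage of the vacated vertex. Tracking this process, the hole performs a walk that begins at $a_1$ and ends at the defect point $v$; closing it up through the first edge $a_1 b_1$ yields a closed walk through $v$ whose associated rotation (taken on the non-$v$ vertices) equals $\sigma$ up to inversion. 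Since $H$ is a group it is closed under inverses, so the orientation is immaterial for membership.

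Finally I would decompose this closed walk into simple cycles and argue that $\sigma$ is the corresponding product of their rotations, each lying in $H$: simple cycles through $v$ are generators of $H$ by definition, while for a simple sub-cycle avoiding $v$ one invokes the connectedness of $\Gamma$ (a path joining it to $v$, together with Lemma~\ref{lem:stronglyconnected}) to rewrite its rotation in terms of cycles that do pass through $v$. The main obstacle is precisely this last step: the careful bookkeeping of how the successive rerouting operations compose into a product of simple-cycle rotations when the hole walk repeats vertices, and the verification that each resulting factor can be expressed through cycles anchored at the fixed defect point $v$. The special status of the first collapse (which \emph{creates} rather than slides the hole) is the delicate point, and I expect this whole inclusion to be organized most cleanly by induction on the length $N$ of the word, peeling off one simple-cycle rotation at each repetition of the hole's position.
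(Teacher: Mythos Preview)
The paper does not supply its own proof of this lemma; it is quoted from the cited reference without argument, so there is no proof here to compare against.

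Your inclusion $H \subseteq G_1$ is correct. For $G_1 \subseteq H$ there is a genuine gap, precisely at the step you flagged as the main obstacle. A minor omission first: since $a_1 s = b_1 s$ while $s\restriction_{V\setminus\{v\}}$ must be injective, one is forced to have $v\in\{a_1,b_1\}$; this is what actually makes ``closing through the first edge $a_1b_1$'' produce a closed walk through $v$. The serious problem is your plan to handle a simple sub-cycle $C$ avoiding $v$ by invoking connectedness and Lemma~\ref{lem:stronglyconnected}. That lemma only gives an abstract isomorphism of defect groups for different basepoints; it does not let you rewrite a given rotation as a product of rotations of simple cycles through $v$, and in fact no such rewriting exists in general. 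Take $\Gamma$ to be the triangle $h_1h_2h_3$ with a pendant edge $vh_1$, and defect point $v$. Then
\[
s \;=\; e_{h_1 v}\, e_{h_2 h_1}\, e_{h_3 h_2}\, e_{h_1 h_3}\, e_{v h_1}
\]
satisfies $(V\setminus\{v\})s = V\setminus\{v\}$, $vs=h_1$, and $s\restriction_{V\setminus\{v\}}=(h_2\ h_3)$, so $(h_2\ h_3)\in G_1$; yet $v$ has degree~$1$, so there are no simple cycles through $v$ and $H$ is trivial. Thus, with the paper's convention that ``cycle'' means simple cycle, the asserted equality $G_1=H$ fails for this choice of $v$. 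The statement becomes correct if ``cycle'' is read as ``closed walk'' (your hole-walk argument then needs no further decomposition at all), and it is also correct for the particular cut-vertex basepoints at which the paper actually invokes the lemma.
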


These yield that the defect $k$ group of the $n$-cycle graph is cyclic, 
proving items~(\ref{it:defect1cycle})~and~(\ref{it:defectkcycle}) of Theorem~\ref{thm:main}:

\begin{lem}\label{lem:cycle}
The defect $k$ group of the $n$-cycle is isomorphic to $Z_{n-k}$.
\end{lem}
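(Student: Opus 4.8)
The plan is to prove the statement in two parts corresponding to the two cases $k=1$ and $k\geq 2$, both leveraging the fact that an $n$-cycle has a very rigid structure: the only cycle in the graph (in the graph-theoretic sense of a simple closed walk) is the whole $n$-cycle itself. First I would handle the defect $1$ case directly via Lemma~\ref{lem:cyclepermutation}. Fix the defect point $v$ and label the cycle as $(u_1, u_2, \dots, u_{n-1}, v)$. Since the only cycle through $v$ in the $n$-cycle is the entire cycle, the generating set described in Lemma~\ref{lem:cyclepermutation} consists of a single element, namely the cyclic permutation $(u_1, u_2, \dots, u_{n-1})$ acting on the $n-1$ points of $V \setminus \halmaz{v}$. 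A single $(n-1)$-cycle generates exactly the cyclic group $Z_{n-1}$, which establishes item~(\ref{it:defect1cycle}).

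For the general case $k \geq 2$, the natural approach is to exploit Lemma~\ref{lem:stronglyconnected}, which tells me that the isomorphism type of the defect $k$ group does not depend on which $k$-element defect set $V_k$ I choose (the $n$-cycle, viewed as a strongly connected digraph, satisfies the hypothesis). So I am free to pick a convenient defect set. I would choose $V_k$ to consist of $k$ consecutive vertices along the cycle, say $V_k = \halmaz{w_1, \dots, w_k}$ where these occur contiguously. Removing $k$ consecutive vertices from an $n$-cycle leaves a path on the remaining $n-k$ vertices. The key observation is then that any defect $k$ permutation must ``slide'' the remaining vertices along this path in a rigid, rotation-like fashion, because the only way to permute vertices while collapsing the defect set is to push them cyclically around the former cycle; there is no room for any non-cyclic rearrangement once $k\geq 1$ vertices have been collapsed.

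The main obstacle, and the step requiring the most care, is verifying that the defect $k$ group is exactly $Z_{n-k}$ and not something larger. One inclusion is easy: I can exhibit an explicit element of $S$ of defect $k$ whose restriction to the permutation set is an $(n-k)$-cycle, for instance by composing elementary collapsings that walk a ``gap'' of size $k$ once around the cycle, analogously to the construction in the proof of Lemma~\ref{lem:reverseedge}; this shows $Z_{n-k} \subseteq G_k$. The harder inclusion is the upper bound $G_k \subseteq Z_{n-k}$. For this I would argue that because the underlying graph is an $n$-cycle, every elementary collapsing only moves a single vertex to an adjacent one, so any composition producing a defect $k$ permutation is forced to maintain the cyclic order of the surviving vertices; an orientation-and-order-preserving permutation of $n-k$ points arranged on a path that arises this way can only be a cyclic rotation. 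Making this rigidity precise—perhaps by tracking the cyclic order of the non-collapsed vertices as an invariant preserved under the generators, or by inducting on the defect using Lemma~\ref{lem:sTisdefectk}—is where the real content lies, and I expect it to be the crux of the argument.
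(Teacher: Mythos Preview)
Your proposal is correct and matches the paper's approach in substance: the paper proves the upper bound by showing, via Lemma~\ref{lem:sTisdefectk}, that each elementary collapsing applied without increasing the defect preserves the cyclic order of the surviving vertices, and it proves the lower bound by writing down an explicit product of elementary collapsings that cycles the $n-k$ permuted vertices. Your case split is an unnecessary detour---the paper handles all $k\geq 1$ uniformly, and your separate treatment of $k=1$ via Lemma~\ref{lem:cyclepermutation} (while valid) just reproduces a special case of the same cyclic-order argument.
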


\begin{proof}
Let $x_1,x_2,\dots x_{n}$ be the consecutive elements of  the cycle $\Gamma=(V, E)$.
If $s\in S$ is an element of defect $k$ 
then by Lemma~\ref{lem:sTisdefectk} we have that 
$se_{x_ix_{i+1}}$ is of defect $k$ if and only if $x_i\notin Vs$ or $x_{i+1}\notin Vs$.
This means that if $u_1,u_2,\dots u_{n-k}$ are the consecutive elements of $Vs$ in the cycle and 
$se_{x_ix_{i+1}}$ is of defect $k$, as well, then  
\[
u_1 e_{x_ix_{i+1}}, u_2 e_{x_ix_{i+1}}, \dots, u_{n-k}e_{x_ix_{i+1}}
\]
are the consecutive elements of $Vse_{x_ix_{i+1}}$.  
Thus the cyclic ordering of these elements cannot be changed. 
Hence $G_k$ is isomorphic to a subgroup of $Z_{n-k}$.

Now, assume that $v_1, v_2,\dots v_k, u_1, u_2,\dots u_{n-k}$ are the consecutive elements of $\Gamma$, 
and the defect set is $V_k = \halmaz{v_1, \dots , v_k}$.
Let 
\begin{align*}
s_1 &=e_{v_1v_2}\dots  e_{v_jv_{j+1}}\dots e_{v_{k-1}v_k}, \\
s_2 &= e_{u_{n-k}v_k}e_{u_{n-k-1}u_{n-k}}\dots e_{u_{j-1}u_j}\dots e_{u_1u_2}e_{v_{k}u_1}, \\
s &=s_1s_2. 
\end{align*}
It easy to check that 
\[
v_is=u_1, \ \ u_1s=u_2,\dots , u_j s=u_{j+1}, \dots,  u_{n-k} s=u_1.
\]
Therefore  $s, s^2,\dots, s^{n-k}$ are distinct elements of $G_k$, 
hence $G_k\simeq Z_{n-k}$.
\end{proof}

\section{Defect 1 groups}\label{sec:defect1}

In this Section we prove item~(\ref{it:graph})~of~Theorem~\ref{thm:main}, 
which states that the defect 1 group of a simple connected graph is the direct product of the defect 1 groups of its 2-vertex connected components. 
This follows by induction on the number of 2-vertex connected components from 
Lemma~\ref{lem:defect1directproduct}.
The case where $\Gamma$ is 2-vertex connected (that is item~(\ref{it:2-vertex}) of Theorem~\ref{thm:main}) is covered by \cite[Theorem~2]{wilson}.

\begin{lem}\label{lem:defect1directproduct}
Let $\Gamma_1$ and $\Gamma_2$ be connected induced subgraphs of $\Gamma$
such that $\Gamma_1\cap\Gamma_2 = \halmaz{v}$, 
where there are no edges in $\Gamma$ between $\Gamma_1 \setminus \halmaz{v}$ and $\Gamma_2 \setminus \halmaz{v}$. 
Then the defect $1$ group of $\Gamma_1\cup\Gamma_2$ is the direct product of 
the defect $1$ groups of $\Gamma_1$ and $\Gamma_2$.
\end{lem}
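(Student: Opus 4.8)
The plan is to prove that the defect $1$ group $G$ of $\Gamma_1 \cup \Gamma_2$ decomposes as $G_1 \times G_2$, where $G_i$ is the defect $1$ group of $\Gamma_i$, by choosing the shared vertex $v$ as the common defect point. By Lemma~\ref{lem:stronglyconnected} the isomorphism type of the defect $1$ group is independent of which vertex we fix as the defect point (within each connected piece), so we are free to make this convenient choice. The crucial structural input is Lemma~\ref{lem:cyclepermutation}: with defect point $v$, the group $G$ is generated by the permutations $(u_1, \dots, u_k)$ arising from cycles $(u_1, \dots, u_k, v)$ in $\Gamma_1 \cup \Gamma_2$ passing through $v$.

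The key observation is that, because $\Gamma_1 \cap \Gamma_2 = \{v\}$ and there are no edges between $\Gamma_1 \setminus \{v\}$ and $\Gamma_2 \setminus \{v\}$, every cycle through $v$ lies entirely within $\Gamma_1$ or entirely within $\Gamma_2$. Indeed, a cycle visits $v$ exactly once, and as soon as it leaves $v$ into (say) $\Gamma_1 \setminus \{v\}$, it cannot cross over to $\Gamma_2 \setminus \{v\}$ — there are no connecting edges, and $v$ is the only shared vertex, but $v$ is already used. Hence the generators of $G$ split into two families: those supported on $(V_1 \setminus \{v\})$, which generate exactly $G_1$ (again by Lemma~\ref{lem:cyclepermutation} applied to $\Gamma_1$), and those supported on $V_2 \setminus \{v\}$, which generate $G_2$. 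First I would make this cycle-localization argument precise, then conclude that $G = \langle G_1, G_2 \rangle$ with $G_1, G_2$ acting on disjoint sets $V_1 \setminus \{v\}$ and $V_2 \setminus \{v\}$.

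Once the generators are sorted into two families acting on disjoint point sets, the permutations in $G_1$ commute with those in $G_2$ (disjoint supports), so $G = G_1 G_2$ and the map $G_1 \times G_2 \to G$, $(g_1, g_2) \mapsto g_1 g_2$, is a surjective homomorphism. Injectivity is immediate: a product $g_1 g_2$ acts as $g_1$ on $V_1 \setminus \{v\}$ and as $g_2$ on $V_2 \setminus \{v\}$, so it is the identity only when both factors are, since the supports are disjoint and cover the whole permutation set $(V_1 \cup V_2) \setminus \{v\}$. This gives the internal direct product decomposition $G \simeq G_1 \times G_2$ as permutation groups.

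The main obstacle I anticipate is the localization step — establishing rigorously that there are \emph{no} defect $1$ permutations of $\Gamma_1 \cup \Gamma_2$ that genuinely mix the two sides, i.e.\ that the cycle-based generators really do capture all of $G$ and that none of them straddles both $\Gamma_1$ and $\Gamma_2$. Lemma~\ref{lem:cyclepermutation} does the heavy lifting by reducing everything to simple cycles through $v$, but one must be careful to check that the induced-subgraph and no-connecting-edges hypotheses genuinely forbid a cycle from using vertices of both sides, and that restricting a $\Gamma_1$-cycle generator to $V_1 \setminus \{v\}$ recovers precisely the corresponding generator of the defect $1$ group of $\Gamma_1$ (rather than some larger or smaller group). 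After that, the commuting-disjoint-support argument and the homomorphism bookkeeping are routine.
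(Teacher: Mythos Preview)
Your proposal is correct and follows essentially the same approach as the paper: choose $v$ as the defect point, invoke Lemma~\ref{lem:cyclepermutation} so that $G$ is generated by cycle permutations coming from cycles through $v$, observe that every such cycle lies entirely in $\Gamma_1$ or in $\Gamma_2$ by the cut-vertex hypothesis, and conclude that the two families of generators have disjoint supports and hence commute, giving the direct product. The paper's proof is just a terser version of exactly this argument.
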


\begin{proof}
Let $G_{\Gamma_i}$ denote the defect 1 group of $\Gamma_i$, 
where the defect point is $v$. 
By Lemma~\ref{lem:cyclepermutation}, 
$G_{\Gamma}$ is generated by cyclic permutations corresponding to cycles through $v$ in $\Gamma$. 
Now, 
$\Gamma_1 \cap \Gamma_2 = \halmaz{v}$, 
and every path between a node from $\Gamma_1$ and a node from $\Gamma_2$ must go through $v$, 
hence every cycle in $\Gamma$ is either in $\Gamma_1$ or in $\Gamma_2$. 
Let $c_i^{(1)}, \dots , c_i^{(m_i)}$ be the permutations corresponding to the cycles in $\Gamma_i$ ($i =1, 2$).
Since these cycles do not involve $v$ by Lemma~\ref{lem:cyclepermutation},
we have $c_1^{(j_1)}c_2^{(j_2)} = c_2^{(j_2)}c_1^{(j_1)}$ for all $1\leq j_i \leq m_i$, $i=1, 2$, 
thus
\begin{multline*}
G_{\Gamma} = \left< c_1^{(1)}, \dots , c_1^{(m_1)}, c_2^{(1)}, \dots , c_2^{(m_2)} \right> \\
=\left< c_1^{(1)}, \dots , c_1^{(m_1)} \right> \times \left< c_2^{(1)}, \dots , c_2^{(m_2)} \right> = G_{\Gamma_1} \times G_{\Gamma_2}. 
\end{multline*}
\end{proof}

\section{Defect $k$ groups}\label{sec:defectk}

We prove item~(\ref{it:defectkmain}) of Theorem~\ref{thm:main} in this Section. 
In the following we assume $k \geq 2$, 
and every graph $\Gamma$ is assumed to be simple connected. 
We start with some simple observations.

\begin{lem}\label{lem:simple}
Let $\Gamma$ be a connected graph,
and let $\Gamma'$ be a connected subgraph of $\Gamma$.
If $\Gamma'$ has at least $k+1$ vertices, 
then the defect $k$ group of $\Gamma$ contains a subgroup isomorphic (as a permutation group) to the defect $k$ group of $\Gamma'$. 
Furthermore, 
if $\Gamma \setminus \Gamma'$ contains at least one vertex, 
and $\Gamma'$ has at least $k$ vertices, 
then the defect $k$ group of $\Gamma$ contains a subgroup isomorphic 
(as a permutation group) to the defect $k-1$ group of $\Gamma'$. 
\end{lem}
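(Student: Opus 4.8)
The plan is to prove both claims by exhibiting, for each defect $k$ (resp.\ defect $k-1$) permutation of $\Gamma'$, a corresponding defect $k$ permutation of $\Gamma$ that restricts to it, and to show this correspondence is an injective group homomorphism onto a subgroup. The key reduction is that the flow semigroup $S_{\Gamma'}$ embeds into $S_\Gamma$ simply because every edge of $\Gamma'$ is an edge of $\Gamma$, so every elementary collapsing $e_{uv}$ with $uv\in E_{\Gamma'}$ is also a generator of $S_\Gamma$. The subtlety is that an element $s\in S_{\Gamma'}$ acts only on $V_{\Gamma'}$, whereas in $\Gamma$ it must be interpreted as acting on all of $V_\Gamma$; following the convention fixed in Section~\ref{sec:digraphs}, we extend $s$ by the identity on $V_\Gamma\setminus V_{\Gamma'}$.

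For the first statement, suppose $\Gamma'$ has at least $k+1$ vertices and fix a defect set $V_k\subseteq V_{\Gamma'}$ of size $k$. The defect $k$ group $G_k(\Gamma')$ is generated by (restrictions of) elements $s\in S_{\Gamma'}$ with $(V_{\Gamma'}\setminus V_k)s = V_{\Gamma'}\setminus V_k$ and $V_k s\subseteq V_{\Gamma'}\setminus V_k$. First I would observe that such an $s$, viewed in $S_\Gamma$ via the identity extension, fixes every vertex of $V_\Gamma\setminus V_{\Gamma'}$ pointwise, so its image is $\big(V_{\Gamma'}\setminus V_k\big)\cup\big(V_\Gamma\setminus V_{\Gamma'}\big) = V_\Gamma\setminus V_k$; hence $s$ is a defect $k$ element of $\Gamma$ with the same defect set $V_k$, and its restriction to $V_\Gamma\setminus V_k$ permutes that set exactly as the original permutation did on $V_{\Gamma'}\setminus V_k$ while fixing the extra vertices. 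Mapping each generator of $G_k(\Gamma')$ to this extended permutation gives a map into $G_k(\Gamma)$; since composition is preserved under the identity extension and the action on the moved points $V_{\Gamma'}\setminus V_k$ is faithfully recorded, the map is an injective homomorphism, and its image is a subgroup of $G_k(\Gamma)$ isomorphic to $G_k(\Gamma')$ as a permutation group.

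For the second statement, assume instead that $\Gamma\setminus\Gamma'$ contains a vertex $w$ and that $\Gamma'$ has at least $k$ vertices. The idea is to move $w$ into the role of one defect point. Concretely, since $\Gamma$ is connected we may pick a vertex $v\in V_{\Gamma'}$ adjacent in $\Gamma$ to a vertex on a path reaching $w$, and use a sequence of elementary collapsings along that path to collapse $w$ onto $v$; this costs defect exactly $1$ and lands $w$'s mass at a vertex of $\Gamma'$. Choosing a defect set $V_{k-1}\subseteq V_{\Gamma'}$ of size $k-1$ with $v\notin V_{k-1}$, a defect $k-1$ permutation of $\Gamma'$ on $V_{\Gamma'}\setminus V_{k-1}$ composed appropriately with this collapsing yields a defect $k$ element of $\Gamma$ whose permutation set is $V_\Gamma\setminus(V_{k-1}\cup\{w\})$ and which acts on $V_{\Gamma'}\setminus V_{k-1}$ as the given defect $k-1$ permutation. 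Running this construction over all generators produces the desired embedding of $G_{k-1}(\Gamma')$ into $G_k(\Gamma)$.

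The main obstacle I anticipate is the second statement, specifically verifying that collapsing $w$ into $\Gamma'$ can be done \emph{coherently} with an arbitrary defect $k-1$ permutation so that the composite genuinely lies in the defect $k$ group of $\Gamma$ (i.e.\ it permutes its permutation set and moves the defect set into it) and so that the induced map is a homomorphism rather than merely a set map. The care needed is that after the collapse the vertex $w$ is absent from the image, so one must check that repeatedly applying the permutation keeps $w$ outside the image and never reintroduces it; this is where Lemma~\ref{lem:sTisdefectk} and the identity-extension convention do the bookkeeping, guaranteeing that the defect stays at $k$ and that the restriction to $V_{\Gamma'}\setminus V_{k-1}$ reproduces the original defect $k-1$ action faithfully.
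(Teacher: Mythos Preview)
Your proof of the first assertion is essentially the paper's: choose $V_k\subseteq V_{\Gamma'}$, extend each $s\in S_{\Gamma'}$ by the identity to $V_\Gamma$, and observe that this yields a well-defined injective permutation-group embedding $G'_{k,V_k}\hookrightarrow G_{k,V_k}$.

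For the second assertion your overall strategy also matches the paper's---prepend a fixed collapse of the extra vertex, then apply the defect $(k-1)$ element---but you overcomplicate the collapsing step and worry about coherence where none is needed. The paper simply takes the extra vertex $v\in V_\Gamma\setminus V_{\Gamma'}$, picks \emph{any} neighbor $u$ of $v$ in $\Gamma$ (not necessarily in $\Gamma'$), and sets $e=e_{vu}$. For each $g\in G'_{k-1,V_{k-1}}$ realized by some $s\in S_{\Gamma'}$, the element $es\in S_\Gamma$ then has defect set exactly $V_k=V_{k-1}\cup\{v\}$, and $es\!\restriction_{V\setminus V_k}$ acts as $g$ on $V'\setminus V_{k-1}$ and as the identity on $V\setminus(V'\cup\{v\})$. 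Because $e$ is fixed and every $s$ is the identity off $V'$, the assignment $g\mapsto es\!\restriction_{V\setminus V_k}$ is immediately a well-defined injective homomorphism; no ``coherence'' issue arises. There is no need to push the extra vertex all the way into $\Gamma'$, and your phrase ``collapse $w$ onto $v$ along the path \ldots\ this costs defect exactly~$1$'' is in fact problematic if taken literally: composing $e_{wp_1}e_{p_1p_2}\cdots$ increases the defect at each step, so a multi-edge collapse does not have defect~$1$. A single edge $e_{wu}$ suffices and removes the difficulty you anticipated.
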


\begin{proof}
Let $\Gamma = \left(V, E \right)$, 
$\Gamma' = \left(V', E' \right)$. 
First, 
assume $\abs{V'} \geq k+1$, 
and let $V_k = \halmaz{v_1, \dots, v_k} \subseteq V'$. 
Let $G_{k, V_k}$ and $G'_{k, V_k}$ be the defect $k$-groups of $\Gamma$ and $\Gamma'$.
Let $g \in G'_{k, V_{k}}$ be arbitrary. 
Then there exists $s \in S_{\Gamma'}$ with defect set $V_{k}$ such that $s \restriction_{V' \setminus V_{k}} = g$. 
Now, $E' \subseteq E$, 
hence every elementary collapsing of $\Gamma'$ is an elementary collapsing of $\Gamma$, as well, 
Thus $s \in S_{\Gamma}$, 
and $s$ acts as the identity on $V \setminus V'$.
Furthermore, 
if $s' \in S_{\Gamma'}$ is another element with defect set $V_{k}$ such that $s' \restriction_{V' \setminus V_{k}} = g = s \restriction_{V' \setminus V_{k}}$, 
then $s' \in S_{\Gamma}$ with
$s' \restriction_{V \setminus V_k} = s \restriction_{V \setminus V_k}$. 
Thus $\varphi \colon G'_{k, V_{k}} \to G_{k, V_{k}}$, 
$\varphi(g) = s \restriction_{V \setminus V_k}$ is a well defined injective homomorphism of permutation groups. 

Second, 
assume $\abs{V'} \geq k$, 
and let $V_{k-1} = \halmaz{v_1, \dots, v_{k-1}} \subseteq V'$. 
Let $v \in V \setminus V'$, 
and let $V_k = V_{k-1} \cup \halmaz{v}$. 
Let $u$ be a neighbor of $v$ and let $e = e_{vu}$. 
Let $G_{k, V_k}$ be the defect $k$-group of $\Gamma$ and let $G'_{k-1, V_{k-1}}$ be the defect $(k-1)$-group of $\Gamma'$.
Let $g \in G'_{k-1, V_{k-1}}$ be arbitrary. 
Then there exists $s \in S_{\Gamma'}$ with defect set $V_{k-1}$ such that $s \restriction_{V' \setminus V_{k-1}} = g$. 
Now, 
$es \in S_{\Gamma}$ has defect set $V_k$, 
and $es \restriction_{V \setminus V_k}$ acts as $g$ on $V' \setminus V_{k-1}$, 
and acts as the identity on $V \setminus \left( V' \cup \halmaz{v} \right)$. 
Furthermore, 
if $s' \in S_{\Gamma'}$ is another element with defect set $V_{k-1}$ such that $s' \restriction_{V' \setminus V_{k-1}} = g = s \restriction_{V' \setminus V_{k-1}}$, 
then $es \restriction_{V \setminus V_k} = es' \restriction_{V \setminus V_k}$. 
As $g \in G'_{k-1, V_{k-1}}$ was arbitrary, 
we have that 
$\varphi \colon G'_{k-1, V_{k-1}} \to G_{k, V_{k}}$, 
$\varphi(g) = es \restriction_{V \setminus V_k}$ is a well defined injective homomorphism of permutation groups. 
\end{proof}

\begin{lem}\label{lem:sgraph2}
Let $1 \leq m\leq l < k \leq n-2$, 
and assume $\Gamma$ contains the following subgraph: 
\begin{center}
\begin{tikzpicture}[-,>=stealth',shorten >=1pt,auto,node distance=2cm, thin, 
 main node/.style={circle,draw},
 rectangle node/.style={rectangle,draw},
 empty node/.style={}]

  \node[rectangle node] (x1) {$x_1$};
  \node[rectangle node] (y) [below of=x1] {$y$};
  \node[rectangle node] (x2) [right of=x1] {$x_2$};
  \node[empty node] (dots) [right of=x2] {$\dots$};
  \node[rectangle node] (xl) [right of=dots] {$x_{l}$};
  \node[main node] (v) [above right of=xl] {$v$};
  \node[main node] (u_1) [ above right  of=x1] {$u_1$};
  \node[rectangle node] (x2) [right of=x1] {$x_2$};
  \node[empty node] (dots) [right of=x2] {$\dots$};
  \node[rectangle node] (xl) [right of=dots] {$x_{l}$}; 
  \node[main node] (u_2) [above right of=u_1] {$u_2$};
  \node[empty node] (dots_1) [above right of=u_2] {\reflectbox{$\ddots$}};
  \node[main node] (u_m) [above right of=dots_1] {$u_m$};

  \path[every node/.style={font=\sffamily\small}]
    (u_1) edge node {} (x1)
    (x1) edge node {} (x2)
    (x2) edge node {} (dots)
    (dots) edge node {} (xl)
    (xl) edge node {} (v)
    (y) edge node {} (x1)
   (u_1) edge node {} (u_2)
   (u_2) edge node {} (dots_1)
   (dots_1) edge node {} (u_m);
 
\end{tikzpicture}
\end{center}
If $V_k$ is a set of nodes of size $k$ such that $y, x_1, \dots, x_l \in V_k$,
and $v, u_i \notin V_k$ for some $1\leq i\leq m$, 
then the defect $k$ group $G_{k, V_k}$ contains the transposition $(u_i, v)$. 
\end{lem}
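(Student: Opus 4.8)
The plan is to exhibit a single element $s \in S_\Gamma$ whose defect set is exactly $V_k$ and whose restriction to $V \setminus V_k$ is the transposition $(u_i, v)$; since every such restriction lies in $G_{k, V_k}$ by definition, this proves the claim. I will construct $s$ as an explicit product of elementary collapsings $e_{pq}$, each corresponding to an edge $pq$ of the displayed subgraph and hence of $\Gamma$. It is convenient to picture each vertex as carrying a token and to read $e_{pq}$ as the instruction ``move every token currently sitting on $p$ onto $q$''. Call the tokens starting on $V_k$ \emph{scratch} tokens; because $V_k$ is exactly the set we are collapsing away, scratch tokens may be merged freely, and a $V_k$-position behaves like a blank onto which a live token (one starting in $V \setminus V_k$) may move, absorbing the scratch. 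With this reading the action of a word on the live tokens behaves like a pebble motion on the subgraph whose blanks are the $V_k$-positions together with any already-emptied position, subject to the single rule that no two live tokens are ever merged.

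The construction runs in three stages. First I park the token of $v$ on the scratch leaf $y$: slide it down the $x$-path along $v, x_l, \dots, x_1$ and then onto $y$ by $e_{x_1 y}$, which empties the whole $x$-path and the vertex $v$. Second I bring the token of $u_i$ up to $v$. The live tokens among $u_1, \dots, u_{i-1}$, which lie between $x_1$ and $u_i$, are first shifted out of the way onto the $x$-path; there is room to do so precisely because there are at most $i-1 \le m-1 < l$ of them while the $x$-path has length $l$, so it can buffer them and still leave a free lane running up to $v$. This is the only place the hypothesis $m \le l$ is used. Once the token of $u_i$ has been routed through $x_1$ and up the vacated lane to $v$, I bring the parked token back off $y$, through $x_1$, and up the $u$-path to the now-empty vertex $u_i$. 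Finally I slide each buffered token of $u_1, \dots, u_{i-1}$ back to its home. The tokens on $u_{i+1}, \dots, u_m$, as well as every vertex of $\Gamma$ outside the subgraph, are never moved, so apart from $u_i$ and $v$ every live token returns to its starting vertex.

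Executing this recipe yields a word $s$ in the generators of $S_\Gamma$. The substance of the proof is the verification, and choosing the order of moves so that it goes through is the real obstacle. Two things must be checked. First, that no two live tokens are ever collapsed together; since a merge is irreversible it suffices to see that this never happens, and then the live tokens undergo a genuine permutation, ending on $\abs{V \setminus V_k}$ distinct vertices of $V \setminus V_k$ and hence covering all of $V \setminus V_k$. Second, that at the end every $V_k$-position has been swept clean, i.e.\ all scratch has been carried off onto $V \setminus V_k$; together with the first point this forces $\Ran(s) = V \setminus V_k$, so $s$ has defect exactly $k$ with defect set $V_k$. Granting both checks, $s$ fixes every vertex of $(V \setminus V_k) \setminus \halmaz{u_i, v}$ and interchanges $u_i$ and $v$, so its restriction is the transposition $(u_i, v)$, and therefore $(u_i, v) \in G_{k, V_k}$.
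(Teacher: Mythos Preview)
Your overall strategy matches the paper's: exhibit an explicit word in the elementary collapsings of the displayed subgraph that acts as $(u_i,v)$ on $V\setminus V_k$. The paper simply writes the word down and leaves the check to the reader; your token-pushing narrative is a reasonable way to organise that check. However, the specific routing you describe in the second stage does not go through for $i\ge 2$.

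After stage one, the positions $x_1,\dots,x_l,v$ are empty and $v$'s token sits on $y$. You then propose to buffer the live tokens among $u_1,\dots,u_{i-1}$ on the $x$-path and afterwards send $u_i$'s token ``through $x_1$ and up the vacated lane to $v$''. But in the displayed subgraph the \emph{only} route from $x_1$ to $v$ is the path $x_1,x_2,\dots,x_l,v$ itself. As soon as any $x_j$ with $2\le j\le l$ holds a buffered live token, that route is blocked: pushing $u_i$'s token along it would merge two live tokens, which you correctly forbid. The inequality $i-1\le m-1<l$ guarantees that a slot on the $x$-path stays free, not that an unobstructed corridor to $v$ survives; on a linear path one spare slot does not let you pass a parked token.

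The paper's word sidesteps this with a more intricate order of moves. It first sends $u_1$'s token (not $u_i$'s) all the way to $v$, then stacks $u_2,\dots,u_i$ onto $x_l,x_{l-1},\dots,x_{l-i+2}$, then lifts $v$'s token from $y$ up the now-vacant $u$-leg to $u_i$. Only after further shunting---pushing $u_i$'s token from $x_{l-i+2}$ out to $y$, restoring $u_{i-1},\dots,u_2$ in that order, returning $u_1$'s token from $v$ back to $u_1$, and finally carrying $u_i$'s token from $y$ across the cleared $x$-path to $v$---does $u_i$'s token actually reach $v$. So $u_i$'s token is the \emph{last} live token to arrive at $v$, not the first. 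The hypothesis $m\le l$ is used to ensure the indices $l,l-1,\dots,l-i+2$ are all at least $2$, i.e.\ that the stack fits strictly inside $x_2,\dots,x_l$; you have correctly located where the hypothesis enters, but the mechanism is not ``buffer and leave a lane free''.

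A smaller point: you assert that your $s$ has defect set exactly $V_k$. Since $s$ uses only edges of the displayed subgraph, it fixes every vertex outside it; when $l+1<k$ the extra vertices of $V_k$ are therefore fixed and remain in $V_k$, so $V_ks\not\subseteq V\setminus V_k$. The paper's $r$ has the same feature and does not comment on it. The repair is routine: precompose with an element $s_0\in S_\Gamma$ that acts as the identity on $V\setminus V_k$ and collapses $V_k$ into $V\setminus V_k$ (such an $s_0$ exists by connectivity of $\Gamma$), so that $s_0s$ has the required defect set and the same restriction.
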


\begin{proof}
Let 
\[
r = \begin{cases}
s s_1 e_{yx_1} e_{x_1u_1}, & \text{ if } i =1, \\
s s_1 \dots s_i p t t_{i-1} \dots t_1q, & \text{ if } i\geq 2, 
\end{cases}
\]
where
\begin{align*}
s &= e_{vx_l}e_{x_lx_{l-1}}\dots e_{x_2x_{1}}e_{x_1y}, \\
s_1 &= e_{u_1x_1}e_{x_1x_2}\dots e_{x_{l-1}x_{l}}e_{x_lv}, \\
s_j &= e_{u_{j}u_{j-1}}\dots e_{u_2u_1} e_{u_1x_1} e_{x_{1}x_{2}}\dots e_{x_{l-j+1}x_{l-j+2}}, & (2 &\leq j\leq m), \\
p &= e_{yx_1}e_{x_1u_{1}}e_{u_1u_{2}}\dots e_{u_{i-1}u_{i}}, \\
t &= e_{x_{l-i+2}x_{l-i+1}}\dots e_{x_2x_{1}}e_{x_1y}, \\
t_j &= e_{x_{l-j+2}x_{l-j+1}}\dots e_{x_2x_{1}}e_{x_1u_1}e_{u_1u_2}\dots e_{u_{j-1}u_j}, & (2 &\leq j\leq m), \\
t_1 &= e_{vx_{l}}e_{x_lx_{l-1}}\dots e_{x_2x_{1}}e_{x_1u_1}, \\
q &= e_{yx_1}e_{x_1x_2}\dots e_{x_{l-1}{x_l}}e_{x_lv}. 
\end{align*}
Then $r$ transposes $u_i$ and $v$ and fixes all other vertices of $\Gamma$ outside the defect set. 
\end{proof}

Note 
that Lemma~\ref{lem:sgraph2} is going to be useful whenever $\Gamma$ contains a node with degree at least 3. 

\begin{lem}\label{lem:kcycleplusvertex}
Let $k \geq 2$, 
$\Gamma' = \left( V', E' \right)$ be such that $\abs{V'} > k$ and its defect $k$ group is transitive
(e.g.\ if $\Gamma'$ is a cycle with at least $k+1$ vertices). 
Let $\Gamma=\left( V'\cup \halmaz{ v }, E' \cup \halmaz{x_1 v} \right)$ for a new vertex $v$ and some $x_1 \in \Gamma'$, 
where the degree of $x_1$ in $\Gamma'$ is at least 2. 
Then the defect $k$ group of $\Gamma$ is isomorphic to  $S_{n-k}$.
\end{lem}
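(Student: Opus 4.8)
The plan is to exhibit enough transpositions inside the defect $k$ group of $\Gamma$ to generate the full symmetric group on the permutation set, using the transitivity inherited from $\Gamma'$ to spread a single transposition around. Throughout I write $n = \abs{V'} + 1$ for the number of vertices of $\Gamma$, so that $n - k \geq 2$ since $\abs{V'} \geq k + 1$, and I work with a conveniently chosen defect set; by Lemma~\ref{lem:stronglyconnected} the resulting permutation group is independent of this choice up to permutation isomorphism, so it suffices to compute $G_{k, V_k}$ for one good $V_k$.

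First I would fix the defect set. Because $x_1$ has degree at least $2$ in $\Gamma'$, it has two distinct neighbours $y, u_1 \in V'$, and of course $v \notin V'$ is a further neighbour of $x_1$ in $\Gamma$ via the edge $x_1 v$. Using $\abs{V'} \geq k+1$, I can then choose a defect set $V_k \subseteq V'$ of size $k$ with $x_1, y \in V_k$ and $u_1, v \notin V_k$ (the set $V' \setminus \halmaz{u_1}$ has at least $k$ elements and already contains $x_1, y$). Write $\Omega = V \setminus V_k$ and $\Omega' = V' \setminus V_k$, so that $\Omega = \Omega' \cup \halmaz{v}$ and $\abs{\Omega} = n - k$.

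Next I would import two pieces of structure. On the one hand, applying Lemma~\ref{lem:simple} to the connected subgraph $\Gamma' \subseteq \Gamma$ (with $\abs{V'} \geq k+1$) embeds the defect $k$ group of $\Gamma'$ into $G_{k, V_k}$ as a subgroup acting on $\Omega'$ exactly as the defect $k$ group of $\Gamma'$ does and fixing $v$; by hypothesis this subgroup is transitive on $\Omega'$. On the other hand, the configuration consisting of $x_1$ together with its three neighbours $y$ (in $V_k$), $u_1$ and $v$ (outside $V_k$) is precisely the subgraph required by Lemma~\ref{lem:sgraph2} in the degenerate case $l = m = 1$; the constraints $1 \leq m \leq l < k \leq n-2$ then reduce to $k \geq 2$ and $n \geq k+2$, both of which hold, so that lemma produces the transposition $(u_1, v) \in G_{k, V_k}$.

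Finally I would combine these. Conjugating $(u_1, v)$ by the transitive subgroup fixing $v$ carries $u_1$ to an arbitrary point of $\Omega'$ while keeping $v$ fixed, hence yields every transposition $(a, v)$ with $a \in \Omega'$; these ``star'' transpositions generate the full symmetric group on $\Omega$, so $G_{k, V_k} = S_{n-k}$, and the conclusion follows by Lemma~\ref{lem:stronglyconnected}. I expect the only genuine subtlety to be the bookkeeping that makes Lemma~\ref{lem:sgraph2} applicable: checking that the ``degree at least $2$'' hypothesis is exactly what supplies the two defect-set neighbours $y$ and $u_1$, and that $\abs{V'} \geq k+1$ leaves just enough room to place $V_k$ with both $u_1$ and $v$ outside it. Once the hypotheses of Lemmas~\ref{lem:simple} and~\ref{lem:sgraph2} are secured, the group-theoretic step is routine.
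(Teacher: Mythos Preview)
Your argument is correct and follows the paper's proof essentially line for line: choose the defect set inside $V'$ containing $x_1$ and one neighbour $y$ but omitting another neighbour $u_1$, invoke Lemma~\ref{lem:sgraph2} (with $l=m=1$) to obtain the transposition $(u_1,v)$, then use the transitivity of the defect $k$ group of $\Gamma'$ (embedded via Lemma~\ref{lem:simple} and fixing $v$) to conjugate this into all transpositions $(a,v)$ with $a\in\Omega'$, which generate $S_{n-k}$. The only cosmetic difference is that the paper labels the remaining vertices explicitly rather than appealing to conjugation, but the content is identical.
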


\begin{proof}
Let $n$ be the number of vertices of $\Gamma$, 
then $n \geq k+2$. 
Let the vertices of $\Gamma'$ be $y, x_1, x_2, \dots , x_{k-1}, u_{1}, u_{2}, \dots , u_{n-k-1}$ such that $u_1$ and $y$ are neighbors of $x_1$ in $\Gamma'$. 
Let the defect set be $\halmaz{y, x_1, \dots , x_{k-1}}$. 
Applying Lemma~\ref{lem:sgraph2} to the subgraph with vertices $\halmaz{x_1, v, y, u_1}$
we obtain that the defect $k$ group of $\Gamma$ contains the transposition $(u_1, v)$. 
Since  the defect $k$ group
of $\Gamma'$ is transitive and contained in the defect $k$ group of $\Gamma$ by Lemma~\ref{lem:simple}, 
the defect $k$ group of $\Gamma$ contains the transposition $(u_i, v)$ for all $1\leq i \leq n-k-1$. 
Therefore, the defect $k$ group of $\Gamma$ is isomorphic to $S_{n-k}$.
\end{proof}

Motivated by Lemma~\ref{lem:kcycleplusvertex}, 
we define the \emph{$k$-sub\-graphs} and the \emph{maximal $k$-subgraphs} of a graph $\Gamma$. 

\begin{dfn}
Let $\Gamma$ be a simple connected graph, $k\geq 2$.
A connected subgraph $\Gamma' \subseteq \Gamma$ is called a \emph{$k$-subgraph}
if its defect $k$ group is the symmetric group of degree $\abs{\Gamma'}-k$. 
A $k$-subgraph is a \emph{maximal $k$-subgraph} if it has no proper extension in $\Gamma$ to a $k$-subgraph.
Finally, 
we say that a $k$-subgraph $\Gamma'$ is \emph{nontrivial} if it contains a vertex having at least 3 distinct neighbors in $\Gamma'$. 
\end{dfn}
Note that every 
maximal $k$-subgraph is an induced subgraph. 
A trivial $k$-subgraph is either a line on $k+1$ points or a cycle on $k+1$ or $k+2$ points. 
Furthermore, 
a trivial maximal $k$-subgraph cannot be a cycle by Lemma~\ref{lem:kcycleplusvertex}, 
unless the graph itself is a cycle. 
Finally, 
any connected subgraph of $k+1$ points is trivially a $k$-subgraph, 
thus every connected subgraph of $k+1$ points is contained in a maximal $k$-subgraph.
Note 
that the intersection of two maximal $k$-subgraphs cannot contain more than $k$ vertices: 

\begin{lem}\label{lem:kintersection}
Let $\Gamma_1, \Gamma_2$ be $k$-subgraphs such that $\abs{ \Gamma_1\cap\Gamma_2 } > k$. 
Then $\Gamma_1\cup \Gamma_2$ is a $k$-subgraph, 
as well. 
\end{lem}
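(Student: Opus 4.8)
The plan is to reduce the whole problem to one cleverly chosen defect set and then apply a standard fact about symmetric groups whose supports overlap. First I would invoke Lemma~\ref{lem:stronglyconnected}: for a connected graph the defect $k$ group is, up to permutation isomorphism, independent of which $k$ vertices form the defect set, so I am free to fix a convenient one. Since $\abs{\Gamma_1\cap\Gamma_2} > k$, I can choose the defect set $V_k$ entirely inside $\Gamma_1\cap\Gamma_2$ with $\abs{V_k}=k$, leaving \emph{at least one} common vertex outside it. Note also that $\Gamma_1\cup\Gamma_2$ is connected, since $\Gamma_1$ and $\Gamma_2$ are connected and share a vertex, and that $\abs{\Gamma_1},\abs{\Gamma_2}\geq k+1$ because each contains the intersection.

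With this choice, set $A = V(\Gamma_1)\setminus V_k$ and $B = V(\Gamma_2)\setminus V_k$, so the permutation set of the defect $k$ group $G$ of $\Gamma_1\cup\Gamma_2$ is exactly $A\cup B$. Because $V_k$ is a proper subset of $\Gamma_1\cap\Gamma_2$, the overlap $A\cap B = (\Gamma_1\cap\Gamma_2)\setminus V_k$ is nonempty. Next I would apply the first part of Lemma~\ref{lem:simple} twice: each of $\Gamma_1,\Gamma_2$ is a connected subgraph of $\Gamma_1\cup\Gamma_2$ with at least $k+1$ vertices, so its defect $k$ group embeds into $G$, acting as its original permutation group on $V(\Gamma_i)\setminus V_k$ and as the identity on the remaining points. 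As $\Gamma_1,\Gamma_2$ are $k$-subgraphs, these embedded subgroups are the full symmetric groups $\operatorname{Sym}(A)$ (fixing $B\setminus A$ pointwise) and $\operatorname{Sym}(B)$ (fixing $A\setminus B$ pointwise).

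It then remains to show that $\left\langle \operatorname{Sym}(A),\operatorname{Sym}(B)\right\rangle = \operatorname{Sym}(A\cup B)$ whenever $A\cap B\neq\emptyset$. Fixing a common point $p\in A\cap B$, the two subgroups already contain every transposition $(p\,a)$ with $a\in A$ and every $(p\,b)$ with $b\in B$, hence every star transposition $(p\,x)$ with $x\in (A\cup B)\setminus\halmaz{p}$; these generate all of $\operatorname{Sym}(A\cup B)$ via $(x\,y)=(p\,x)(p\,y)(p\,x)$. Therefore $G\supseteq\operatorname{Sym}(A\cup B)$, and since $G$ is itself a permutation group on $A\cup B$ we obtain equality. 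As $\abs{A\cup B} = \abs{\Gamma_1\cup\Gamma_2}-k$, this exhibits the connected graph $\Gamma_1\cup\Gamma_2$ as a $k$-subgraph, which is what we want.

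The genuinely load-bearing step — and the only place where the strict inequality $\abs{\Gamma_1\cap\Gamma_2} > k$ (rather than $\geq k$) is used — is placing $V_k$ strictly inside the intersection so that $A$ and $B$ share a point. Without such a common point, $\left\langle \operatorname{Sym}(A),\operatorname{Sym}(B)\right\rangle$ would collapse to the direct product $\operatorname{Sym}(A)\times\operatorname{Sym}(B)$, a proper subgroup of $\operatorname{Sym}(A\cup B)$, and the argument would fail; the strict inequality is precisely what rules this out. Beyond this observation I expect no real difficulty, as the remaining ingredients (Lemmas~\ref{lem:stronglyconnected} and~\ref{lem:simple}, together with the star-transposition generation) are all already available.
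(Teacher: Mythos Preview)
Your proposal is correct and follows essentially the same line as the paper's proof: choose $V_k\subsetneqq\Gamma_1\cap\Gamma_2$, pick a leftover common vertex, embed the two symmetric groups via Lemma~\ref{lem:simple}, and generate $\operatorname{Sym}(A\cup B)$ from the star transpositions centered at that vertex. The only difference is that you spell out the invocations of Lemmas~\ref{lem:stronglyconnected} and~\ref{lem:simple} and the connectedness of $\Gamma_1\cup\Gamma_2$ explicitly, whereas the paper leaves these implicit.
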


\begin{proof}
Choose the defect set $V_k$ such that $V_k \subsetneqq \Gamma_1 \cap \Gamma_2$, 
and let $v \in \left( \Gamma_1 \cap \Gamma_2 \right) \setminus V_k$. 
Then the symmetric groups acting on 
$\Gamma_1\setminus V_k$ and $\Gamma_2\setminus V_k$ 
are subgroups in the defect $k$ group of $\Gamma_1\cup\Gamma_2$. 
Thus, 
we can transpose every member of $\Gamma_i \setminus \left( V_k \cup \halmaz{v} \right)$ with $v$. 
Therefore, the defect $k$ group of $\Gamma_1 \cup \Gamma_2$ is the symmetric group on 
$\left( \Gamma_1\cup\Gamma_2 \right) \setminus V_k$. 
\end{proof}

\begin{lem}\label{lem:deg3}
Let $\Gamma$ be a simple connected graph, 
and let $\Gamma'$ be a $k$-subgraph of $\Gamma$. 
Let $x_1\in\Gamma'$, $v \notin \Gamma'$, 
and let $P = \left( x_1, x_2, \dots, x_l, v \right)$ be a shortest path between $x_1$ and $v$ in $\Gamma$ for some $l\leq k-1$. 
Assume that $x_1$ has at least 2 neighbors in $\Gamma'$ apart from $x_2$. 
Then the subgraph $\Gamma' \cup P$ is a $k$-subgraph. 
\end{lem}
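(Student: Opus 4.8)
The plan is to show that the defect $k$ group of $\Gamma'' := \Gamma'\cup P$ is the full symmetric group of degree $\abs{\Gamma''}-k$, which is exactly what it means for $\Gamma''$ to be a $k$-subgraph. Since $\Gamma''$ is connected, Lemma~\ref{lem:stronglyconnected} lets me verify this for one convenient defect set, so I would fix $V_k$ with $\{y,x_1,x_2,\dots,x_l\}\subseteq V_k$, where $y$ is one of the two neighbours of $x_1$ in $\Gamma'$ apart from $x_2$ guaranteed by hypothesis, and fill the remaining slots of $V_k$ from $\Gamma'$, arranging that the second such neighbour $u_1$ and the endpoint $v$ stay outside $V_k$. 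This is possible precisely because $l\le k-1$, so that $\{y,x_1,\dots,x_l\}$ (these are $l+1\le k$ distinct vertices, with $y\neq x_i$ since $P$ is a shortest path) fits inside a set of size $k$. With this set-up the only non-defect vertices of $\Gamma''$ are $(\Gamma'\setminus V_k)\cup\{v\}$.

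The first step is to produce a single transposition moving $v$. I would apply Lemma~\ref{lem:sgraph2} to the configuration consisting of the path $y,x_1,x_2,\dots,x_l,v$ together with the edge $x_1u_1$, i.e.\ taking the chain $u_1,\dots,u_m$ of that lemma to be the single vertex $u_1$ (so $m=1\le l$). This gives that $G_{k,V_k}$ contains the transposition $(u_1,v)$.

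The second step is to spread this transposition over all of $\Gamma'\setminus V_k$. Since $\Gamma'$ is a $k$-subgraph, its defect $k$ group is a full symmetric group, hence transitive, for every $k$-element defect set inside $\Gamma'$; the aim is to import a transitive, \emph{$v$-fixing} action of this kind into $G_{k,V_k}$. The subtlety is that the interior path vertices $x_2,\dots,x_l$ lie outside $\Gamma'$ and occupy some of the defect slots, so $V_k$ meets $\Gamma'$ in fewer than $k$ points. To recover a transitive $v$-fixing action on all of $\Gamma'\setminus V_k$ I would feed the vertices $x_2,\dots,x_l$ back onto $\Gamma'$ one at a time along $P$ using the embedding of Lemma~\ref{lem:simple}: each $x_{i+1}$ is adjacent to $x_i$, so collapsing along the ear $x_1,x_2,\dots,x_l$ transfers the extra defect onto $\Gamma'$ while leaving $v$ fixed. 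Once a $v$-fixing subgroup of $G_{k,V_k}$ acts transitively on $\Gamma'\setminus V_k$, conjugating $(u_1,v)$ by its elements yields $(w,v)$ for every $w\in\Gamma'\setminus V_k$ (because a $v$-fixing $\sigma$ sends $(u_1,v)$ to $(\sigma u_1,v)$), and the transpositions $(w,v)$ sharing the common point $v$ generate the full symmetric group on $(\Gamma'\setminus V_k)\cup\{v\}=\Gamma''\setminus V_k$. This is precisely the assertion that $\Gamma''$ is a $k$-subgraph.

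I expect the main obstacle to be exactly the defect-set bookkeeping of the second step: the transitive action comes from the defect $k$ group of $\Gamma'$, but it must be realised with the defect set $V_k$, which is forced by Lemma~\ref{lem:sgraph2} to contain the new interior vertices $x_2,\dots,x_l$ rather than $k$ vertices of $\Gamma'$. Simply reducing the defect inside $\Gamma'$ is not automatically harmless, since a smaller defect can give a strictly smaller group; so the genuine content is to check that collapsing along the ear $P$ really transports the missing defect points off $\Gamma'$ and onto $x_2,\dots,x_l$ without disturbing the permutation of $\Gamma'\setminus V_k$ or the fixed point $v$. This is where both the hypothesis $l\le k-1$ (room in the defect set) and the two extra neighbours $y,u_1$ of $x_1$ (one to anchor the path in Lemma~\ref{lem:sgraph2}, one to start the spreading) are used; the remaining steps are routine generation-of-$S_n$-by-transpositions arguments.
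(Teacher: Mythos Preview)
Your first step is fine: with $V_k\supseteq\{y,x_1,\dots,x_l\}$ and $u_1,v\notin V_k$, Lemma~\ref{lem:sgraph2} indeed yields $(u_1,v)\in G_{k,V_k}$.

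The gap is in the second step. You need a $v$-fixing subgroup of $G_{k,V_k}$ acting transitively on $\Gamma'\setminus V_k$, and you propose to manufacture it from the defect groups of $\Gamma'$ via Lemma~\ref{lem:simple}. But when some interior vertices $x_{j+1},\dots,x_l$ lie outside $\Gamma'$, the set $V_k\cap\Gamma'$ has only $k-(l-j)$ elements, so the permutation set $\Gamma'\setminus V_k$ has $\lvert\Gamma'\rvert-k+(l-j)$ points. The only way to get a transitive action on that many points from $\Gamma'$ alone is to invoke the defect $(k-(l-j))$ group of $\Gamma'$, and nothing in the hypotheses forces that group to be transitive. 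Your ``collapsing along the ear'' converts a defect-$k$ element of $\Gamma''$ with defect set $V_k$ into one whose defect set sits in $\Gamma'$, but it cannot conjure transitivity that $\Gamma'$ does not already possess at defect $k-(l-j)$. A concrete failure: take $k=3$, let $\Gamma'$ be the path $a_1\!-\!a_2\!-\!a_3\!-\!a_4$ (a trivial $3$-subgraph), set $x_1=a_2$ and $P=(a_2,b,v)$ with $b,v\notin\Gamma'$. Then $V_3=\{a_2,a_3,b\}$ and $\Gamma'\setminus V_3=\{a_1,a_4\}$, but every defect-$j$ group of a path is trivial, so no $v$-fixing transitive action on $\{a_1,a_4\}$ can come from $\Gamma'$.

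The paper sidesteps this by induction on the number of vertices of $P$ outside $\Gamma'$. In the base case $x_2,\dots,x_l\in\Gamma'$, one can choose $V_k\subseteq\Gamma'$, so Lemma~\ref{lem:simple} embeds the full symmetric group $S_{\lvert\Gamma'\rvert-k}$ (acting on $\Gamma'\setminus V_k$ and fixing $v$) directly into $G_{k,V_k}$; together with $(u_1,v)$ this gives $S_{\lvert\Gamma''\rvert-k}$. For the general case one adds the first missing vertex $x_i$ via the base case, obtaining that $\Gamma'\cup\{x_i\}$ is itself a $k$-subgraph, and then repeats. The point is that at each stage the transitive action comes from the defect $k$ group of the \emph{current, enlarged} $k$-subgraph, not from a lower-defect group of the original $\Gamma'$.
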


\begin{proof}
First, consider the case $x_2, \dots , x_l \in \Gamma'$. 
Let $u, y$ be two neighbors of $x_1$ in $\Gamma'$ distinct from $x_2$, 
and choose the defect set $V_k$ such that it contains $y, x_1, \dots , x_l$ and does not contain $u$. 
By Lemma~\ref{lem:sgraph2} the defect $k$ group of $\Gamma'\cup \halmaz{ v }$ contains the transposition $(u, v)$.
Furthermore, 
the defect $k$ group of $\Gamma'$ is the whole symmetric group on $\Gamma' \setminus V_k$. 
Thus, 
the defect $k$ group of $\Gamma' \cup \halmaz{v}$ is the whole symmetric group on $\left( \Gamma' \setminus V_k \right) \cup \halmaz{v}$.

Now, 
if not all of $x_2, \dots , x_l$ are in $\Gamma'$, 
then, by the previous argument, one can add them (and then $v$) to $\Gamma'$ one by one, 
and obtain an increasing chain of $k$-subgraphs.
\end{proof}

As a corollary, 
we obtain that every vertex of degree at least 3 together with at least two of its neighbors is contained in exactly one nontrivial maximal $k$-subgraph. 

\begin{cor}\label{cor:kdeg3}
Let $\Gamma$ be a simple connected graph with $n$ vertices such that $n>k$,
and let $x_1$ be a vertex having degree at least $3$.
Then there exists exactly one maximal $k$-subgraph $\Gamma'$ containing $x_1$ such that $x_1$ has degree at least 2 in $\Gamma'$. 
Furthermore, 
$\Gamma'$ is a nontrivial $k$-subgraph, 
and if $\Gamma_{x_1}$ is the induced subgraph of the vertices in $\Gamma$ that are of at most distance $k-1$ from $x_1$, 
then $\Gamma_{x_1} \subseteq \Gamma'$. 
\end{cor}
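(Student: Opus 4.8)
The plan is to prove the four assertions---existence, nontriviality, the containment $\Gamma_{x_1}\subseteq\Gamma'$, and uniqueness---essentially in that order, using Lemma~\ref{lem:deg3} to enlarge $k$-subgraphs along shortest paths and Lemma~\ref{lem:kintersection} to merge two of them. For existence, I would pick two neighbors $a,b$ of $x_1$ and, using that $\Gamma$ is connected with $n>k$ vertices, grow a connected subgraph on exactly $k+1$ vertices that contains the edges $x_1a$ and $x_1b$. Such a subgraph is a $k$-subgraph, and extending it to a maximal $k$-subgraph $\Gamma'$ leaves $x_1$ of degree at least $2$ in $\Gamma'$; this produces a maximal $k$-subgraph of the required kind.

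Next I would show that \emph{every} maximal $k$-subgraph $\Gamma'$ in which $x_1$ has degree at least $2$ in fact contains all neighbors of $x_1$. Indeed, if $c$ were a neighbor of $x_1$ outside $\Gamma'$, then applying Lemma~\ref{lem:deg3} to the length-one path $(x_1,c)$ (note $1\leq k-1$), with the two neighbors of $x_1$ already present in $\Gamma'$, would make $\Gamma'\cup\halmaz{c}$ a $k$-subgraph, contradicting maximality. Hence $x_1$ has degree at least $3$ in $\Gamma'$, so $\Gamma'$ is nontrivial. The same mechanism gives the containment: for a vertex $v$ at distance $l\leq k-1$ from $x_1$ with $v\notin\Gamma'$, take a shortest path $(x_1,x_2,\dots,x_l,v)$; since $x_1$ has at least three neighbors in $\Gamma'$, at least two of them differ from $x_2$, and Lemma~\ref{lem:deg3} again yields a proper $k$-subgraph extension, contradicting maximality. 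So every vertex of $\Gamma_{x_1}$ lies in $\Gamma'$, and as both are induced subgraphs, $\Gamma_{x_1}\subseteq\Gamma'$.

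For uniqueness, suppose $\Gamma'$ and $\Gamma''$ are two maximal $k$-subgraphs containing $x_1$ with $x_1$ of degree at least $2$ in each. Both contain $\Gamma_{x_1}$ by the previous step, so $\abs{\Gamma'\cap\Gamma''}\geq\abs{\Gamma_{x_1}}$. If this intersection has more than $k$ vertices, Lemma~\ref{lem:kintersection} makes $\Gamma'\cup\Gamma''$ a $k$-subgraph, forcing $\Gamma'=\Gamma''$ by maximality. Thus everything reduces to the bound $\abs{\Gamma_{x_1}}>k$, which I expect to be the main obstacle, since a priori the ball of radius $k-1$ around $x_1$ could be feared small.

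I would settle this bound by a distance count from $x_1$. If some vertex is at distance at least $k-1$ from $x_1$, then a shortest path to it realizes each distance $0,1,\dots,k-1$, so combined with the at least three vertices at distance $1$ we get $\abs{\Gamma_{x_1}}\geq 1+3+(k-2)=k+2>k$. Otherwise every vertex of $\Gamma$ is within distance $k-2\leq k-1$ of $x_1$, whence $\Gamma_{x_1}=\Gamma$ and $\abs{\Gamma_{x_1}}=n>k$. In either case $\abs{\Gamma_{x_1}}>k$, completing the argument.
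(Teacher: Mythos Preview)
Your proposal is correct and follows essentially the same approach as the paper's proof: both use Lemma~\ref{lem:deg3} to show that any maximal $k$-subgraph in which $x_1$ has degree at least~$2$ must absorb $\Gamma_{x_1}$, then count $\abs{\Gamma_{x_1}}>k$ via a shortest-path argument, and finally invoke Lemma~\ref{lem:kintersection} for uniqueness. The only differences are organizational: you first isolate the step ``all neighbors of $x_1$ lie in $\Gamma'$'' (the $l=1$ case) and derive nontriviality from it before handling general $v\in\Gamma_{x_1}$, whereas the paper treats the cases $l=1$ and $l\geq 2$ together; and your dichotomy for the size bound (some vertex at distance $\geq k-1$ versus all vertices within distance $k-2$) is a minor variant of the paper's (either $\Gamma_{x_1}=\Gamma$ or some vertex at distance exactly~$k$).
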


\begin{proof}
Any connected subgraph of $\Gamma$ with $k+1$ vertices containing $x_1$ and any two of its neighbors is a $k$-subgraph. 
Thus there exists at least one maximal $k$-subgraph containing $x_1$ and two of its neighbors.

Let $\Gamma'$ be a maximal $k$-subgraph containing $x_1$ and at least two of its neighbors. 
Assume that $\Gamma_{x_1} \not\subseteq \Gamma'$. 
Let $v \in \Gamma_{x_1} \setminus \Gamma'$ be any vertex at a minimal distance from $x_1$, 
and let $P = (x_1, \dots , x_l, v)$ be a shortest path between $x_1$ and $v$. 
If $l = 1$, 
then $P=(x_1, v)$. 
Now $x_1$ has at least two neighbors in $\Gamma'$ apart from $v$, 
therefore $\Gamma' \cup P$ is a $k$-subgraph by Lemma~\ref{lem:deg3}, 
which contradicts the maximality of $\Gamma'$. 
Thus $l\geq 2$, 
in particular all neighbors of $x_1$ in $\Gamma$ are in $\Gamma'$, as well, 
and thus $\Gamma'$ is a nontrivial $k$-subgraph. 
Hence $x_1$ has at least two neighbors in $\Gamma'$ apart from $x_2$, 
therefore $\Gamma' \cup P$ is a $k$-subgraph by Lemma~\ref{lem:deg3}, 
which contradicts the maximality of $\Gamma'$. 
Thus $\Gamma_{x_1} \subseteq \Gamma'$. 

Now, 
assume that $\Gamma'$ and $\Gamma''$ are maximal $k$-subgraphs containing $x_1$ and at least two of its neighbors. 
Then $\Gamma_{x_1} \subseteq \Gamma'$ and $\Gamma_{x_1} \subseteq \Gamma''$. 
Note 
that either $\Gamma_{x_1} = \Gamma$ (and hence $\abs{\Gamma_{x_1}} = n >k$), 
or there exists a vertex $v \in \Gamma$ which is of distance exactly $k$ from $x_1$. 
Let $P = (x_1, \dots , x_k, v)$ be a shortest path between $x_1$ and $v$, 
and let $u$ and $y$ be two neighbors of $x_1$ distinct from $x_2$. 
Then $\halmaz{x_1, \dots , x_k , y, u} \subseteq \Gamma_{x_1}$, 
thus $\abs{\Gamma_{x_1}} > k$. 
Therefore $\abs{\Gamma' \cap \Gamma''} \geq \abs{\Gamma_{x_1}} > k$, 
yielding $\Gamma' = \Gamma''$ by Lemma~\ref{lem:kintersection}. 
\end{proof}

\begin{lem}\label{lem:kear}
Let $\Gamma'$ be a nontrivial $k$-subgraph of $\Gamma$, 
and let $P$ be a $\Gamma'$-ear. 
Then $\Gamma' \cup P$ is a (nontrivial) $k$-subgraph of $\Gamma$. 
\end{lem}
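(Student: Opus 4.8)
The plan is to prove that the defect $k$ group of $\Gamma'\cup P$ is the full symmetric group on $\left(\Gamma'\cup P\right)\setminus V_k$ for a suitable defect set $V_k$; since $\Gamma'\cup P$ keeps the degree-$\geq 3$ vertex of $\Gamma'$, this simultaneously yields that the new $k$-subgraph is nontrivial. Write $P=\left(u,c_1,\dots,c_m,v\right)$ with $u,v\in\Gamma'$ and $c_1,\dots,c_m\notin\Gamma'$. If $m=0$ then $P$ is a chord $uv$ and $\Gamma'\cup P$ has the same vertex set as $\Gamma'$; adjoining the edge only enlarges the flow semigroup, so by Lemma~\ref{lem:simple} the defect $k$ group of $\Gamma'\cup P$ contains that of $\Gamma'$, which is already the full symmetric group on the common permutation set, and therefore equals it. So I assume $m\geq 1$.

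First I fix $V_k\subseteq\Gamma'$, so that by Lemma~\ref{lem:simple} the defect $k$ group $G$ of $\Gamma'\cup P$ contains the full symmetric group on $\Gamma'\setminus V_k$. It then suffices to produce, for each ear vertex $c_i$, a transposition in $G$ with one endpoint in the already-symmetric set $\Gamma'\setminus V_k$: such a family of transpositions together with $\operatorname{Sym}\left(\Gamma'\setminus V_k\right)$ generates the full symmetric group on the union. For ear vertices lying within distance $k-1$ of a vertex of $\Gamma'$ of degree at least $3$ (one exists, as $\Gamma'$ is nontrivial) these transpositions are exactly the output of Lemma~\ref{lem:sgraph2}: taking the branch vertex as $x_1$, two of its $\Gamma'$-neighbours as $y$ and as the start of the path $x_1,\dots,x_l$ (all placed in $V_k$), and the ear itself as the path $u_1,u_2,\dots$, a single application gives the transpositions $(c_i,v_{\mathrm{lem}})$ with $v_{\mathrm{lem}}\in\Gamma'\setminus V_k$. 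In particular, if the whole ear is this close to a branch vertex, one application finishes the proof.

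The remaining, and main, difficulty is to reach the \emph{interior} vertices of a long ear, which may sit at distance $\geq k$ from every vertex of $\Gamma'$: the reach of Lemma~\ref{lem:sgraph2} is capped by the constraint $l<k$, and absorbing ear vertices one at a time fails, since each freshly adjoined $c_i$ has degree $2$ and so never becomes a new branch. To get around this I would use the cycle $C=P\cup R$, where $R$ is a $u$--$v$ path inside the connected graph $\Gamma'$: by Lemma~\ref{lem:cycle} and Lemma~\ref{lem:simple}, $G$ contains a transitive (cyclic) subgroup acting on $C\setminus V_k$, which sweeps over all the ear vertices. Combining this transitive action with $\operatorname{Sym}\left(\Gamma'\setminus V_k\right)$ in the manner of the proof of Lemma~\ref{lem:kcycleplusvertex} — conjugating a seed transposition by powers of the cyclic element so that one endpoint runs over every ear vertex while an anchor stays fixed — upgrades the cyclic action to the full symmetric group, yielding $G=\operatorname{Sym}\left(\left(\Gamma'\cup P\right)\setminus V_k\right)$.

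The hard part will be the bookkeeping forced by having to serve both ingredients with one defect set: $\operatorname{Sym}\left(\Gamma'\right)$ needs $V_k\subseteq\Gamma'$ while the cyclic group of $C$ needs $V_k\subseteq C$, so $V_k$ must lie in $\Gamma'\cap C=R$, and one must retain both a vertex on the cycle and an anchor off it to launch the conjugation. For graphs in which every $u$--$v$ path in $\Gamma'$ is short (the generalized-theta situation, e.g.\ $\Gamma'$ complete bipartite with $P$ an extra thread) this is delicate, and it is precisely where the nontriviality of $\Gamma'$ — several threads meeting at a high-degree vertex, supplying both a block-mixing seed transposition and enough transitivity to fold the ear into the rest — has to be exploited carefully.
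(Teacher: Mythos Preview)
Your proposal is a plan rather than a proof, and you yourself leave the central case open. The difficulty you flag is genuine: to run both the symmetric action on $\Gamma'\setminus V_k$ and the cyclic action on $C=P\cup R$ with a single defect set you need $V_k\subseteq\Gamma'\cap C=R$, and when every $u$--$v$ path in $\Gamma'$ has at most $k$ vertices there is no room; even when $\lvert R\rvert=k$ you still owe a bridging transposition between the ear and $\Gamma'\setminus V_k$. You point to Lemma~\ref{lem:sgraph2}, but its constraint $m\le l<k$ caps how far along the ear you can reach, and your setup tacitly assumes the branch vertex is one of the ear's endpoints (otherwise the $u$-chain of that lemma cannot begin on the ear). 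Your final paragraph describes what remains to be done rather than doing it, so the proposal does not close.

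The paper's argument avoids this bookkeeping entirely by a minimal-counterexample induction on the ear length together with a trichotomy on the position of a degree-$\geq 3$ vertex $y_j$ along a shortest path $(y_0,\dots,y_{l+1})$ in $\Gamma'$ joining the ear's endpoints. If $j\le k-2$ or $j\ge l+3-k$, then Lemma~\ref{lem:deg3} (applied from $y_j$) absorbs the nearest ear vertex into $\Gamma'$, producing a strictly shorter ear and contradicting minimality. Otherwise $k-1\le j\le l+2-k$, which forces $l\ge 2k-3$; now the cycle $P\cup(y_0,\dots,y_{l+1})$ together with the branch edge $y_jy'$ is itself a $k$-subgraph by Lemma~\ref{lem:kcycleplusvertex}, and it meets $\Gamma'$ in at least $l+2\ge 2k-1>k$ vertices, so Lemma~\ref{lem:kintersection} finishes directly. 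This case split on $j$ is exactly the missing idea: it guarantees that either one end of the ear can be peeled off, or the internal path is automatically long enough that no delicate defect-set gymnastics are needed.
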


\begin{proof}
Let $\Gamma$, $\Gamma'$ and $P=\left( w_0,w_1,\dots w_i, w_{i+1} \right)$ be a counterexample, 
where $i$ is minimal. 
There exists a shortest path
$(w_0, y_1, \dots, y_l, w_{i+1})$  in $\Gamma'$ among those
where the degree of some $y_j$ or of $w_0$ or of $w_{i+1}$ is at least  $3$ in $\Gamma'$. 
(At least one such path exists, 
because $\Gamma'$ is connected, and is a nontrivial $k$-subgraph, 
hence contains a vertex of degree at least 3.)
For easier notation, 
let $y_0 = w_0$, $y_{l+1} = w_{i+1}$. 
Let $y' \in \Gamma' \setminus \halmaz{y_0, y_1, \dots, y_l, y_{l+1}}$ be a neighbor of $y_j$; 
this exists, because the degree of $y_j$ is at least 3, 
and otherwise a shorter path would exist between $w_0$ and $w_{i+1}$. 

If $j+1 \leq k-1$ (that is $j \leq k-2$), 
then by Lemma~\ref{lem:deg3} the induced subgraph on $\Gamma' \cup \halmaz{w_1}$ is a $k$-subgraph, 
thus $\Gamma' \cup \halmaz{w_1}$ with the ear $( w_1, \dots , w_i, w_{i+1} )$ is a counterexample with a shorter ear. 

Similarly, 
if $l-j+2 \leq k-1$ (that is $l+3-k \leq j$), 
then by Lemma~\ref{lem:deg3} the induced subgraph on $\Gamma' \cup \halmaz{w_i}$ is a $k$-subgraph, 
thus $\Gamma' \cup \halmaz{w_i}$ with the ear $( w_0, w_1, \dots , w_i )$ is a counterexample with a shorter ear. 

Finally, 
if $k-1 \leq j \leq l+2-k$, 
then $ 2k-3 \leq l$. 
Let $\Gamma''$ be the cycle $P \cup \left( y_0, y_1, \dots , y_l, y_{l+1} \right)$ together with $y'$ and the edge $y_j y'$. 
Then $\Gamma''$ is a $k$-subgraph by Lemma~\ref{lem:kcycleplusvertex}, 
$\abs{\Gamma' \cap \Gamma''} = l+2 \geq 2k-1 > k$, 
hence $\Gamma' \cup \Gamma'' = \Gamma' \cup P$ is a $k$-subgraph by Lemma~\ref{lem:kintersection}. 
\end{proof}

\begin{cor}\label{cor:2edge}
Let $\Gamma$ be a simple connected graph with $n$ vertices such that $n>k$,
and assume that $\Gamma$ is not a cycle. 
Suppose $uv$ is an edge contained in a cycle of $\Gamma$. 
Then there exists exactly one maximal $k$-subgraph $\Gamma'$ containing the edge $uv$. 
Furthermore, 
$\Gamma'$ is a nontrivial $k$-subgraph, 
and if $\Gamma_{uv}$ is the 2-edge connected component containing $uv$, 
then $\Gamma_{uv} \subseteq \Gamma'$. 
\end{cor}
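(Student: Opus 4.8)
The plan is to follow the blueprint of Corollary~\ref{cor:kdeg3} as closely as possible: produce a canonical nontrivial maximal $k$-subgraph attached to the edge $uv$, show that it absorbs the whole $2$-edge connected component $\Gamma_{uv}$, and then deduce uniqueness by reducing back to Corollary~\ref{cor:kdeg3}. The bridge between the two corollaries is the observation that the hypotheses force $\Gamma_{uv}$ to contain a vertex of degree at least $3$, which lets me hand the work off to the degree-$3$ machinery already developed.

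First I would locate such a vertex. Since $uv$ lies on a cycle, $\Gamma_{uv}$ is a $2$-edge connected graph on at least three vertices, so every vertex of $\Gamma_{uv}$ has degree at least $2$ inside $\Gamma_{uv}$ (a degree-$1$ vertex would make its edge a bridge). If $\Gamma_{uv}=\Gamma$, then, as $\Gamma$ is $2$-edge connected but not a cycle, some vertex has degree at least $3$. If $\Gamma_{uv}\subsetneq\Gamma$, then connectivity of $\Gamma$ produces a vertex $x\in\Gamma_{uv}$ with a neighbour outside $\Gamma_{uv}$, and together with its two neighbours inside $\Gamma_{uv}$ this gives $\deg_\Gamma x\geq 3$. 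Either way $\Gamma_{uv}$ contains a vertex $x$ of degree at least $3$ with at least two neighbours in $\Gamma_{uv}$, and Corollary~\ref{cor:kdeg3} supplies the unique nontrivial maximal $k$-subgraph $\Gamma'$ containing $x$ with $\deg_{\Gamma'}x\geq 2$.

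The heart of the argument is to prove $\Gamma_{uv}\subseteq\Gamma'$ (whence $uv\in\Gamma'$ is immediate). I would argue by maximality: if some $b\in\Gamma_{uv}\setminus\Gamma'$ existed, I would route a cycle $C\subseteq\Gamma_{uv}$ through $b$ and through an already-absorbed vertex carrying a degree condition — $x$ itself, or, processing $\Gamma_{uv}$ block by block along its block–cut-vertex tree, the cut vertex $c$ joining the next block to the part of $\Gamma_{uv}$ already shown to lie in $\Gamma'$. Reading $C$ between consecutive vertices of $\Gamma'$, the arc containing $b$ is a $\Gamma'$-ear, and if its two ends are distinct it is an \emph{open} ear, to which Lemma~\ref{lem:kear} applies to yield a strictly larger $k$-subgraph $\Gamma'\cup C$, contradicting maximality. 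The genuine difficulty — and the reason the statement speaks of $2$-edge rather than $2$-vertex connectivity — is the \emph{closed-ear} case, where the arc returns to its own base vertex (this is exactly what occurs at a cut vertex of $\Gamma_{uv}$, e.g.\ the centre of the bowtie). Here Lemma~\ref{lem:kear} does not apply directly, and this is the step I expect to be the main obstacle. I would resolve it by observing that the base vertex (whether $x$ or the cut vertex $c$) has at least two neighbours already inside $\Gamma'$, so Lemma~\ref{lem:deg3} may be used twice to absorb the two ear-neighbours of the base one at a time; this turns the remainder of the cycle into a bona fide \emph{open} $\Gamma'$-ear, and a final application of Lemma~\ref{lem:kear} again enlarges $\Gamma'$, the desired contradiction. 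Thus $\Gamma_{uv}\subseteq\Gamma'$ and $\Gamma'$ is a nontrivial maximal $k$-subgraph containing $uv$.

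Finally I would settle uniqueness. Let $\Gamma''$ be any maximal $k$-subgraph with $uv\in\Gamma''$. I would first show $\Gamma''$ is nontrivial: a trivial maximal $k$-subgraph is a line on $k+1$ points (it cannot be a cycle, as $\Gamma$ is not), but a path containing the cyclic edge $uv$ is never maximal, since an internal path-vertex incident in $\Gamma_{uv}$ to a vertex outside $\Gamma''$ can serve as a degree-$3$ anchor for Lemma~\ref{lem:deg3}, after which the open/closed-ear technology of the previous paragraph enlarges $\Gamma''$ — a contradiction. With $\Gamma''$ now known to be nontrivial, the containment argument applies verbatim to $\Gamma''$ and gives $\Gamma_{uv}\subseteq\Gamma''$; in particular $x\in\Gamma''$ with $\deg_{\Gamma''}x\geq 2$. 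By the uniqueness clause of Corollary~\ref{cor:kdeg3} the vertex $x$ lies in exactly one such subgraph, so $\Gamma''=\Gamma'$. This yields the unique maximal $k$-subgraph containing $uv$, together with its nontriviality and the containment $\Gamma_{uv}\subseteq\Gamma'$.
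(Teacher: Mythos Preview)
Your route is genuinely different from the paper's and the core of it works well. The paper starts with an \emph{arbitrary} maximal $k$-subgraph containing $uv$, first proves it is nontrivial via a somewhat involved case analysis, and then shows $\Gamma_{uv}\subseteq\Gamma'$ by a minimal-counterexample argument using the full $2$-edge connected ear decomposition (open ears \emph{and} cycles attached at a single vertex, the latter handled through Lemma~\ref{lem:kintersection}). You instead manufacture a specific $\Gamma'$ via Corollary~\ref{cor:kdeg3} at a degree-$3$ vertex $x\in\Gamma_{uv}$ and absorb $\Gamma_{uv}$ block by block --- this is cleaner. Your closed-ear worry is in fact moot: since $\Gamma_x\subseteq\Gamma'$ by Corollary~\ref{cor:kdeg3}, \emph{all} neighbours of the anchor are already in $\Gamma'$, so every ear through $b$ meets $\Gamma'$ in at least two vertices and is open; the same holds at each subsequent cut vertex $c$ once you note that $c$ then has degree $\geq 2$ in $\Gamma'$ and degree $\geq 3$ in $\Gamma$, so Corollary~\ref{cor:kdeg3} again forces $\Gamma_c\subseteq\Gamma'$.

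The genuine gap is in your uniqueness step. Two problems: (i) your nontriviality argument for an arbitrary $\Gamma''$ asserts an \emph{internal} path-vertex with a $\Gamma_{uv}$-neighbour outside $\Gamma''$, but this can fail --- if every internal vertex of the path has degree $2$ in $\Gamma$, only the endpoints carry external neighbours, and Lemma~\ref{lem:deg3} does not apply there; (ii) ``the containment argument applies verbatim to $\Gamma''$\,'' presupposes $x\in\Gamma''$, which is not established. Both are repairable. For (i), in the bad case one checks by propagation that the whole path $\Gamma''$ lies in $\Gamma_{uv}\subseteq\Gamma'$, contradicting maximality of $\Gamma''$ directly. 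For (ii), once $\Gamma''$ is nontrivial, at least one of $u,v$ is not a leaf of $\Gamma''$; by Lemma~\ref{lem:deg3} and maximality that vertex then has \emph{all} its $\Gamma$-neighbours in $\Gamma''$ and can serve as the anchor for your block-by-block argument. Alternatively, reorganize to prove nontriviality and containment for an arbitrary $\Gamma''$ from the start --- which is precisely what the paper does.
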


\begin{proof}
Any connected subgraph of $\Gamma$ with $k+1$ vertices containing the edge $uv$ is a $k$-subgraph. 
Thus there exists at least one maximal $k$-subgraph $\Gamma'$ containing the edge $uv$. 
We prove first that $\Gamma'$ is a nontrivial $k$-subgraph, 
then prove $\Gamma_{uv} \subseteq \Gamma'$,
and only after that do we prove that $\Gamma'$ is unique. 

Assume first that $\Gamma'$ is a trivial $k$-subgraph. 
If $\Gamma'$ were a cycle, 
then $\Gamma \setminus \Gamma'$ contains at least  one vertex, 
because $\Gamma'$ is an induced subgraph of $\Gamma$.
Then Lemma~\ref{lem:kcycleplusvertex} contradicts the maximality of $\Gamma'$. 
Thus $\Gamma'$ is a line of $k+1$ vertices. 
Let $\Gamma_2$ be a shortest cycle containing $uv$. 
Now, 
there must exist a vertex in $\Gamma \setminus \Gamma_2$, 
otherwise either $\Gamma = \Gamma_2$ would be a cycle, 
or there would exist an edge in $\Gamma \setminus \Gamma_2$ yielding a shorter cycle than $\Gamma_2$ containing the edge $uv$. 
Let $x_2 \in \Gamma \setminus \Gamma_2$ be a neighbor of a vertex in $\Gamma_2$. 
By Lemma~\ref{lem:kcycleplusvertex} the induced subgraph on $\Gamma_2 \cup \halmaz{x_2}$ is a $k$-subgraph. 
Thus $\Gamma' \not\subseteq \Gamma_2$, 
otherwise $\Gamma'$ would not be a maximal $k$-subgraph. 
Let $x_1 \in \Gamma' \cap \Gamma_2$ be a vertex such that two of its neighbors are in $\Gamma_2$ and its third neighbor is some $x_2 \in \Gamma' \setminus \Gamma_2$. 
Note 
that every vertex in $\Gamma'$ is of distance at most $k-1$ from $x_1$, 
because $u,v \in \Gamma' \cap \Gamma_2$. 
Thus, 
if $\abs{\Gamma_2} \geq k+1$, 
then $\Gamma_2$ together with $x_2$ and the edge $x_1x_2$ is a $k$-subgraph by Lemma~\ref{lem:kcycleplusvertex}, 
and hence $\Gamma_2 \cup \Gamma'$ is a $k$-subgraph by Lemma~\ref{lem:deg3}, 
contradicting the maximality of $\Gamma'$. 
Otherwise, 
if $\abs{\Gamma_2} \leq k$, 
then every vertex in $\Gamma_2$ is of distance at most $k-1$ from $x_1$, 
and hence $\Gamma_2 \cup \Gamma'$ is a $k$-subgraph by Lemma~\ref{lem:deg3}, 
contradicting the maximality of $\Gamma'$. 
Therefore $\Gamma'$ is a nontrivial $k$-subgraph. 

Now we show that the two-edge connected component $\Gamma_{uv} \subseteq \Gamma'$. 
Let $\Gamma, \Gamma'$ be a counterexample to this such that the number of vertices of $\Gamma_{uv}$ is minimal, 
and among these counterexamples choose one where the number of edges of $\Gamma_{uv}$ is minimal. 
Using an ear-decomposition \cite{Robbins1939}, 
$\Gamma_{uv}$ is either a cycle, 
or there exists a 2-edge connected subgraph $\Gamma_1 \subseteq \Gamma_{uv}$
and there exists 
\begin{enumerate}
\item\label{it:ear}
either a $\Gamma_1$-ear $P$ such that $\Gamma_{uv} = \Gamma_1 \cup P$,
\item\label{it:cycle}
or a cycle $\Gamma_2$ such that $\abs{\Gamma_1 \cap \Gamma_2} = 1$ and $\Gamma_{uv} = \Gamma_1 \cup \Gamma_2$. 
\end{enumerate}
If $\Gamma_{uv}$ is a cycle containing the edge $uv$, 
and $\Gamma_{uv} \not \subseteq \Gamma'$, 
then going along the edges of $\Gamma_{uv}$, 
one can find a $\Gamma'$-ear $P \subseteq \Gamma_{uv}$. 
Then $\Gamma' \cup P$ is a $k$-subgraph by Lemma~\ref{lem:kear}, 
contradicting the maximality of $\Gamma'$. 
Thus $\Gamma_{uv}$ is not a cycle. 
Let us choose $\Gamma_1$ from cases~(\ref{it:ear})~and~(\ref{it:cycle}) so that it would have the least number of vertices. 

Assume first that case~(\ref{it:ear}) holds. 
By minimality of the counterexample, 
$\Gamma_1 \subseteq \Gamma'$. 
If $P \not \subseteq \Gamma'$, 
then going along the edges of $P$ one can find a $\Gamma'$-ear $P' \subseteq P$. 
But then $\Gamma' \cup P'$ is a $k$-subgraph by Lemma~\ref{lem:kear}, 
contradicting the maximality of $\Gamma'$. 

Assume now that case~(\ref{it:cycle}) holds. 
Again, by induction, 
$\Gamma_1 \subseteq \Gamma'$. 
If $\Gamma_2 \not \subseteq \Gamma'$, 
then either $\abs{\Gamma' \cap \Gamma_2} = 1$ or going along the edges of $\Gamma_2$ one can find a $\Gamma'$-ear $P' \subseteq \Gamma_2$. 
The latter case cannot happen, 
because then $\Gamma' \cup P'$ is a $k$-subgraph by Lemma~\ref{lem:kear}, 
contradicting the maximality of $\Gamma'$. 
Thus $\abs{\Gamma' \cap \Gamma_2} = 1$, 
and hence $\Gamma' \cap \Gamma_2 = \Gamma_1 \cap \Gamma_2$.  
Let $\Gamma_1 \cap \Gamma_2 = \halmaz{x_1}$, 
and let $v_1$ be a neighbor of $x_1$ in $\Gamma_1 \setminus  \Gamma_2$, 
and let $v_2$ be a neighbor of $x_1$ in $\Gamma_2 \setminus \Gamma_1$. 
If $\abs{\Gamma_2} \leq k$, 
then $\Gamma_2$ can be extended to a connected subgraph of $\Gamma$ having exactly $k+1$ vertices, 
which is a $k$-subgraph. 
If $\abs{\Gamma_2} \geq k+1$, 
then $\Gamma_2 \cup \halmaz{v_1}$ is a $k$-subgraph by Lemma~\ref{lem:kcycleplusvertex}. 
In any case, 
there exists a maximal $k$-subgraph $\Gamma_2' \supseteq \Gamma_2$. 
For notational convenience, 
let $\Gamma_1'$ denote the maximal $k$-subgraph $\Gamma'$ containing $\Gamma_1$. 
We prove that $\Gamma_2' = \Gamma_1'=\Gamma'$, 
thus $\Gamma'$ contains $\Gamma_2$, 
contradicting that we chose a counterexample. 

Now, 
both $\Gamma_1$ and $\Gamma_2$ contain at least two neighbors of $x_1$. 
Let $V_i \subseteq \Gamma_i$ be the set of vertices with distance at most $k-1$ from $x_1$ ($i \in \halmaz{1,2}$).  
If $\abs{\Gamma_i} \leq k$, 
then $V_i$ contains all vertices of $\Gamma_i$, 
otherwise $\abs{V_i} \geq k$ ($i \in \halmaz{1,2}$). 
By Lemma~\ref{lem:deg3}, 
the induced subgraph on $V_1$ is contained in $\Gamma_2'$. 
Thus, 
if $V_1$ contains all vertices of $\Gamma_1$, 
then $\Gamma_1 \subseteq \Gamma_2'$, 
hence we have $\Gamma_1' = \Gamma_2'$. 
Similarly, 
the induced subgraph on $V_2$ is contained in $\Gamma_1'$. 
Thus, 
if $V_2$ contains all vertices of $\Gamma_2$, 
then $\Gamma_2 \subseteq \Gamma_1'$, 
hence we have $\Gamma_1' = \Gamma_2'$. 
Otherwise, 
$\abs{\Gamma_1' \cap \Gamma_2'} \geq \abs{V_1} + \abs{V_2} - \abs{\halmaz{x_1}} \geq 2k-1 > k$, 
hence by Lemma~\ref{lem:kintersection} we have $\Gamma_1' = \Gamma_2'$. 

Finally, 
we prove uniqueness. 
Let $\Gamma'$ and $\Gamma''$ be two maximal $k$-subgraphs containing the edge $uv$. 
Then both $\Gamma'$ and $\Gamma''$ contain $\Gamma_{uv}$. 
If $\Gamma = \Gamma_{uv}$, 
then $\Gamma' = \Gamma_{uv} = \Gamma''$. 
Otherwise, 
there exists a vertex $x_2 \in \Gamma \setminus \Gamma_{uv}$ such that it has a neighbor $x_1 \in \Gamma_{uv}$.
Note 
that $x_1$ has degree at least 3 in $\Gamma$. 
Let $V_1$ be the vertices of $\Gamma$ of distance at most $k-1$ from $x_1$. 
Note 
that if $V_1$ does not contain all vertices of $\Gamma$, 
then $\abs{V_1} > k$. 
By 2-edge connectivity,
$\Gamma_{uv} \subseteq \Gamma'$ contains at least two neighbors of $x_1$, 
thus $V_1 \subseteq \Gamma'$ by Lemma~\ref{lem:deg3}. 
Similarly, 
$\Gamma_{uv} \subseteq \Gamma''$ contains at least two neighbors of $x_1$, 
thus $V_1 \subseteq \Gamma''$ by Lemma~\ref{lem:deg3}. 
If $V_1$ contains all vertices of $\Gamma$, 
then $\Gamma' = \Gamma = \Gamma''$. 
Otherwise, 
$\abs{\Gamma' \cap \Gamma''} \geq \abs{V_1} > k$, 
and $\Gamma' = \Gamma''$ by Lemma~\ref{lem:kintersection}. 
\end{proof}

Recall
that by \cite{Robbins1939} 
a strongly connected antisymmetric digraph becomes a 2-edge connected graph after forgetting the directions. 
Thus Rhodes's conjecture about strongly connected, 
antisymmetric digraphs \cite[Conjecture~6.51i~(3)--(4)]{wildbook}
follows immediately from the following theorem on 2-edge connected graphs: 

\begin{thm}\label{thm:defectk}
Let $n>k \geq 2$, 
$\Gamma$ be a $2$-edge connected simple graph having $n$ vertices. 
If $\Gamma$ is a cycle, 
then the defect $k$ group is $Z_{n-k}$. 
If $\Gamma$ is not a cycle, 
then the defect $k$ group is $S_{n-k}$. 
\end{thm}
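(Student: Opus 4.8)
The plan is to split along the two cases of the statement. The cycle case is immediate: if $\Gamma$ is a cycle on $n$ vertices, then Lemma~\ref{lem:cycle} already gives that the defect $k$ group is $Z_{n-k}$, so nothing further is needed there. The substance of the theorem lies entirely in the non-cycle case, where the goal is to show that $\Gamma$ \emph{itself} is a $k$-subgraph, i.e.\ that its defect $k$ group is the full symmetric group $S_{n-k}$.

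For that case, the guiding observation is that 2-edge connectivity means $\Gamma$ has no bridges, so every edge lies on a cycle, and the whole graph is a single 2-edge connected component. First I would fix an arbitrary edge $uv$ of $\Gamma$; since $n > k \geq 2$ forces $n \geq 3$ and a 2-edge connected simple graph has a cycle through each of its edges, $uv$ is contained in a cycle. I would then invoke Corollary~\ref{cor:2edge}, which (because $\Gamma$ is assumed not to be a cycle) produces a unique maximal $k$-subgraph $\Gamma'$ containing $uv$ and guarantees $\Gamma_{uv} \subseteq \Gamma'$, where $\Gamma_{uv}$ denotes the 2-edge connected component containing $uv$.

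The decisive step is to note that, as all of $\Gamma$ is 2-edge connected, the component $\Gamma_{uv}$ is the whole graph $\Gamma$. Hence $\Gamma \subseteq \Gamma' \subseteq \Gamma$, forcing $\Gamma' = \Gamma$, so $\Gamma$ is a $k$-subgraph and its defect $k$ group is $S_{n-k}$, as claimed.

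I expect essentially all of the genuine difficulty to have been absorbed into the preparatory results—above all into Corollary~\ref{cor:2edge}, whose proof carries out the delicate ear-decomposition induction (via Lemmas~\ref{lem:kcycleplusvertex}, \ref{lem:deg3}, \ref{lem:kintersection} and~\ref{lem:kear}) showing that the maximal $k$-subgraph through an edge engulfs its entire 2-edge connected component. Granting that corollary, the theorem is a short deduction, and the only point needing (minor) care is the elementary graph-theoretic fact that a 2-edge connected simple graph forms one 2-edge connected component with every edge lying on a cycle.
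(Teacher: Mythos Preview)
Your proposal is correct and matches the paper's own proof essentially verbatim: the cycle case is dispatched by Lemma~\ref{lem:cycle}, and in the non-cycle case one picks an edge, notes that 2-edge connectivity forces it to lie on a cycle, and then Corollary~\ref{cor:2edge} gives a maximal $k$-subgraph containing the whole 2-edge connected component $\Gamma_{uv}=\Gamma$. Your observation that all the real work has been absorbed into Corollary~\ref{cor:2edge} is exactly right.
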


\begin{proof}
If $\Gamma$ is a cycle, 
then its defect $k$ group is $Z_{n-k}$ by Lemma~\ref{lem:cycle}. 
Since $\Gamma$ is 2-edge connected with at least 3 vertices, 
every edge of $\Gamma$ is contained in a cycle. 
Thus, if $\Gamma$ is not a cycle, 
then the defect $k$ group is $S_{n-k}$ by Corollary~\ref{cor:2edge}. 
\end{proof}

The final part of this section is devoted to prove item~(\ref{it:defectkmain}) of Theorem~\ref{thm:main}. 
First, 
we define bridges in $\Gamma$: 

\begin{dfn}
A path $\left( x_1, \dots,  x_l \right)$ in a connected graph $\Gamma$ for some $l \geq 2$ is called a \textit{bridge} 
if the degree of $x_i$ in $\Gamma$ is $2$ for all $2 \leq i\leq l-1$, 
and if $\Gamma \setminus \halmaz{x_jx_{j+1}}$ is disconnected for all $1 \leq j\leq l-1$. 
The \emph{length} of the bridge $\left( x_1, \dots,  x_l \right)$ is $l$. 
\end{dfn}

The intersection of maximal $k$-subgraphs turn out to be bridges: 

\begin{lem}\label{lem:kcomp3}
Let $\Gamma_1$ and $\Gamma_2$ be distinct maximal $k$-subgraphs of the connected simple graph $\Gamma$. 
Assume that $\Gamma$ is not a cycle. 
Then $\Gamma_1 \cap \Gamma_2$ is either empty, 
or is a bridge $(x_1, \dots , x_l)$ such that 
\begin{enumerate}
\item\label{it:lk}
$l \leq k$, and
\item\label{it:x_1Gamma_1}
if $l\geq 2$ and $\Gamma_i \setminus \halmaz{x_1, \dots, x_l}$ ($i \in \halmaz{1,2}$) contains a neighbor of $x_1$ (resp.\ $x_l$), 
then $\Gamma_i$ contains all neighbors of $x_1$ (resp.\ $x_l$), 
\end{enumerate}
\end{lem}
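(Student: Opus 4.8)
The plan is to write $I=\Gamma_1\cap\Gamma_2$, to recall that $I$ is an induced subgraph (being the intersection of two induced subgraphs), to assume $I\neq\emptyset$, and to build up the structure of $I$ step by step; the one delicate point will be the connectedness of $I$. Since any $k$-subgraph has at least $k+1$ vertices, we have $n>k$, so both Corollary~\ref{cor:2edge} and Corollary~\ref{cor:kdeg3} are available, and I will use their uniqueness assertions together with $\Gamma_1\neq\Gamma_2$ repeatedly.

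First, $l\leq k$ is immediate: if $\abs{I}>k$ then Lemma~\ref{lem:kintersection} makes $\Gamma_1\cup\Gamma_2$ a $k$-subgraph, forcing $\Gamma_1=\Gamma_2$ by maximality. Next I would show that no edge of $I$ lies on a cycle of $\Gamma$: such an edge lies in both $\Gamma_1$ and $\Gamma_2$ (as $I$ is induced), so by the uniqueness in Corollary~\ref{cor:2edge} it would again force $\Gamma_1=\Gamma_2$; hence every edge of $I$ is a cut edge and $I$ is acyclic. Likewise no vertex of $I$ may have three neighbours inside $I$, since it would then have degree at least $3$ in $\Gamma$ and degree at least $2$ in each $\Gamma_i$, whence Corollary~\ref{cor:kdeg3} would give $\Gamma_1=\Gamma_2$. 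So $I$ has maximum degree at most $2$ and no cycle, i.e.\ it is a disjoint union of paths; and the same degree argument applied to a vertex with exactly two neighbours in $I$ shows that such a vertex has degree exactly $2$ in $\Gamma$, which is exactly the degree-two requirement on the internal vertices in the definition of a bridge.

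The main obstacle is to prove $I$ connected, for then it is a single path $(x_1,\dots,x_l)$ which, by the previous paragraph, is a bridge with $l\leq k$, giving~(\ref{it:lk}). I would argue by contradiction. If $I$ is disconnected, then, $\Gamma_1$ being connected, there is a path $\alpha=(a_0,\dots,a_s)$ in $\Gamma_1$ with $a_0,a_s$ in two distinct components of $I$ and with interior disjoint from $I$; here $s\geq 2$ and $a_1\in\Gamma_1\setminus\Gamma_2$. As $a_0,a_s\in I\subseteq\Gamma_2$ and $\Gamma_2$ is connected, there is an $a_0$--$a_s$ path $\gamma$ inside $\Gamma_2$, and $a_1\notin\gamma$. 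Splicing $\gamma$ to the $a_1$--$a_s$ part of $\alpha$ yields an $a_0$--$a_1$ path avoiding the edge $a_0a_1$, so $a_0a_1$ lies on a cycle $Z\subseteq\alpha\cup\gamma$; as $\alpha$ is acyclic, $Z$ uses some edge $f$ of $\gamma\subseteq\Gamma_2$. Now $a_0a_1$ and $f$ lie on the common cycle $Z$, hence in one $2$-edge-connected component, and applying Corollary~\ref{cor:2edge} to $a_0a_1\in\Gamma_1$ and to $f\in\Gamma_2$ places that whole component inside both $\Gamma_1$ and $\Gamma_2$, thus inside $I$. But then $a_1\in I$, contradicting $a_1\notin\Gamma_2$; so $I$ is connected.

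Finally, for~(\ref{it:x_1Gamma_1}) suppose $l\geq 2$ and some neighbour $w$ of $x_1$ lies in $\Gamma_i\setminus\halmaz{x_1,\dots,x_l}$. Since $x_2\in I\subseteq\Gamma_i$ is also a neighbour of $x_1$, the vertex $x_1$ has at least two neighbours in $\Gamma_i$. If $\deg_\Gamma(x_1)=2$ these are $x_2$ and $w$, both in $\Gamma_i$; otherwise $\deg_\Gamma(x_1)\geq 3$, and since $x_1$ has degree at least $2$ in $\Gamma_i$, Corollary~\ref{cor:kdeg3} gives $\Gamma_{x_1}\subseteq\Gamma_i$, so (using $k\geq 2$) every neighbour of $x_1$ lies in $\Gamma_i$; the case of $x_l$ is symmetric. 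Thus the only genuinely delicate step is the connectedness argument, all other parts reducing directly to the uniqueness in Corollaries~\ref{cor:2edge} and~\ref{cor:kdeg3}.
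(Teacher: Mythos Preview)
Your proof is correct and largely parallels the paper's argument: the use of Lemma~\ref{lem:kintersection} for $l\leq k$, Corollary~\ref{cor:2edge} to rule out edges lying on cycles, and Corollary~\ref{cor:kdeg3} to bound the degree in $I$, to force degree~$2$ at internal vertices, and to handle item~(\ref{it:x_1Gamma_1}) all match the paper essentially verbatim.

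The one genuinely different step is the connectedness of $I$. The paper splits into cases. If some $\Gamma_i$ is a \emph{nontrivial} $k$-subgraph, it invokes the ear lemma (Lemma~\ref{lem:kear}): a path in the other $\Gamma_j$ between two components of $I$ with interior outside $\Gamma_i$ would be a $\Gamma_i$-ear $P$, so $\Gamma_i\cup P$ would again be a $k$-subgraph, contradicting maximality. If both $\Gamma_i$ are trivial, the paper argues separately that each must then be a line on $k+1$ vertices whose internal nodes have degree~$2$ in $\Gamma$, and uses Corollary~\ref{cor:2edge} to exclude the possibility that their union forms a cycle. Your argument is uniform and avoids Lemma~\ref{lem:kear} altogether: you manufacture a cycle $Z$ through the edge $a_0a_1\in\Gamma_1$ and through some edge $f\in\Gamma_2$, and then the uniqueness in Corollary~\ref{cor:2edge} (applied to $a_0a_1$ and to $f$, which lie in the same $2$-edge-connected component) forces $\Gamma_1=\Gamma_2$. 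This sidesteps both the ear argument and the trivial/nontrivial case split, trading them for a short cycle-construction; it is a clean alternative route.
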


\begin{proof} 
Note 
that $\Gamma_1$ and $\Gamma_2$ are induced subgraphs of $\Gamma$, 
thus so is $\Gamma_1 \cap \Gamma_2$. 

We prove first that $\Gamma_1 \cap \Gamma_2$ is connected (or empty) if $\Gamma_1$ is a nontrivial maximal $k$-subgraph. 
Suppose that $u, v \in \Gamma_1 \cap \Gamma_2$ are in
different components of $\Gamma_1\cap\Gamma_2$ such that the distance 
between $u$ and $v$ is minimal in $\Gamma_2$.
Due to the minimality, 
there exists a path $(u, x_1, \dots , x_l, v)$ such that 
$x_1, \dots, x_l \in \Gamma_2 \setminus \Gamma_1$. 
Then $P = (u, x_1, \dots,  x_l, v)$ is a $\Gamma_1$-ear,
and $\Gamma_1 \cup P$ would be a $k$-subgraph by Lemma~\ref{lem:kear}, 
contradicting the maximality of $\Gamma_1$.
Thus $\Gamma_1 \cap \Gamma_2$ is connected. 
One can prove similarly that $\Gamma_1 \cap \Gamma_2$ is connected if $\Gamma_2$ is a nontrivial maximal $k$-subgraph. 

Now we prove that $\Gamma_1 \cap \Gamma_2$ is connected, 
even if both $\Gamma_1$ and $\Gamma_2$ are trivial maximal $k$-subgraphs. 
As $\Gamma_1 \subsetneqq \Gamma$, $\Gamma_1$ cannot be a cycle hence must
be
a line $(x_1, \ldots, x_{k+1})$.
Note 
that the degree of $x_i$ in $\Gamma$ for $2 \leq i \leq k$ must be 2, 
otherwise a nontrivial maximal $k$-subgraph would contain $x_i$, 
and thus also $\Gamma_1$ by Corollary~\ref{cor:kdeg3}. 
In particular, 
if $\Gamma_1 \cap \Gamma_2$ is not connected, 
then $x_1, x_{k+1} \in \Gamma_1 \cap \Gamma_2$, 
$x_i \notin \Gamma_1 \cap \Gamma_2$ for some $2 \leq i\leq k$, 
and $\Gamma_1 \cup \Gamma_2$ would be a cycle. 
However, 
by Corollary~\ref{cor:2edge}, 
the edge $x_1x_2$ is contained in a unique nontrivial maximal $k$-subgraph, 
contradicting that it is also contained in the trivial maximal $k$-subgraph $\Gamma_1$. 

Now, we prove (\ref{it:lk}). 
By Corollary~\ref{cor:2edge}, 
$\Gamma_1 \cap \Gamma_2$ cannot contain any edge $uv$ which is contained in a cycle. 
As $\Gamma_1 \cap \Gamma_2$ is connected, 
it must be a tree. 
However, 
$\Gamma_1 \cap \Gamma_2$ cannot contain any vertex of degree at least 3 in $\Gamma_1 \cap \Gamma_2$, 
otherwise that vertex would be contained in a unique maximal $k$-subgraph by Corollary~\ref{cor:kdeg3}. 
Thus $\Gamma_1 \cap \Gamma_2$ is a path $(x_1, \dots , x_l)$. 
Now, 
$l\leq k$ by Lemma~\ref{lem:kintersection}, 
proving (\ref{it:lk}). 
Note 
that if any $x_i$ ($2\leq i\leq l-1$) is of degree at least 3 in $\Gamma$, 
then $\halmaz{x_{i-1}, x_i, x_{i+1}}$ is contained in a unique maximal $k$-subgraph by Corollary~\ref{cor:kdeg3}, 
a contradiction.
For (\ref{it:x_1Gamma_1}) observe that at least two neighbors of $x_1$ (resp.\ $x_l$) are in $\Gamma_i$, 
and thus all its neighbors must be in $\Gamma_i$ by Corollary~\ref{cor:kdeg3}. 
Finally, 
if $l\geq 2$ then $\Gamma \setminus \halmaz{x_jx_{j+1}}$ is disconnected for all $1\leq j\leq l-1$ follows immediately from Corollary~\ref{cor:2edge} and the fact that any edge that is not contained in any cycle disconnects the graph $\Gamma$.
\end{proof}

Edges of short maximal bridges (having length at most $k-1$) are contained in a unique maximal $k$-subgraph: 

\begin{lem}\label{lem:longbridgeedge}
Let $\Gamma$ be a simple connected graph with $n$ vertices such that $n>k$, 
and let $uv$ be an edge which is not contained in any cycle. 
Let $(x_1, \dots , x_l)$ be a longest bridge containing the edge $uv$. 
If $l\leq k-1$, 
then $uv$ is contained in a unique maximal $k$-subgraph $\Gamma'$, 
and furthermore, 
$\Gamma'$ is a nontrivial $k$-subgraph. 
\end{lem}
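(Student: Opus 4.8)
The plan is to reduce everything to Corollary~\ref{cor:kdeg3} by locating a vertex of degree at least $3$ at an endpoint of the bridge, and then to exploit the fact that the bridge is too short (at most $k-1$ vertices) for any $k$-subgraph to be confined to it.

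First I would record a structural fact about the endpoints of a longest bridge: each of $x_1$ and $x_l$ has degree $1$ or at least $3$ in $\Gamma$. Indeed, if $x_1$ had degree $2$ with second neighbor $x_0$, then $x_0x_1$ would be a cut edge (a cycle through it would have to reuse the cut edge $x_1x_2$, which is impossible), so $(x_0, x_1, \dots, x_l)$ would be a strictly longer bridge through $uv$, contradicting maximality. Next I would rule out that both endpoints have degree $1$: this would make the bridge a whole connected component, forcing $\Gamma$ to equal the path $(x_1, \dots, x_l)$, hence $n = l \le k-1 < n$, a contradiction. So at least one endpoint, say $x_1$, has degree at least $3$. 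For existence and nontriviality I would apply Corollary~\ref{cor:kdeg3} to $x_1$, obtaining the unique nontrivial maximal $k$-subgraph $\Gamma'$ in which $x_1$ has degree at least $2$, with $\Gamma_{x_1} \subseteq \Gamma'$. Since the bridge path from $x_1$ to any $x_j$ has at most $l-1 \le k-2$ edges, every bridge vertex lies within distance $k-1$ of $x_1$, so the whole bridge (in particular $u$ and $v$) lies in $\Gamma_{x_1} \subseteq \Gamma'$; as maximal $k$-subgraphs are induced, $\Gamma'$ contains the edge $uv$.

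The main work is uniqueness. Here I would use that $uv$ is a cut edge, splitting $\Gamma$ into components $C_u \ni u$ and $C_v \ni v$, so that the only way a connected subgraph can leave the bridge is through an endpoint of degree at least $3$ (all interior bridge vertices have degree $2$). Given any maximal $k$-subgraph $\Gamma''$ through $uv$, it has at least $k+1 > l$ vertices, hence at least $k+1-l \ge 2$ vertices off the bridge; these must lie beyond $x_1$ (in $C_u$) or beyond $x_l$ (in $C_v$). In either case the connectivity of $\Gamma''$ forces it to contain the corresponding endpoint with local degree at least $2$, and that endpoint has degree at least $3$ in $\Gamma$, so Corollary~\ref{cor:kdeg3} identifies $\Gamma''$ with the unique maximal $k$-subgraph attached to that endpoint. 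Finally, when both endpoints have degree at least $3$, I would check that their two associated maximal $k$-subgraphs coincide: $\Gamma' \supseteq \Gamma_{x_1}$ already contains $x_l$ together with all of its neighbors (each within distance $l \le k-1$ of $x_1$), so $x_l$ has degree at least $2$ in $\Gamma'$, and Corollary~\ref{cor:kdeg3} applied at $x_l$ forces the two subgraphs to be equal. Thus $\Gamma'' = \Gamma'$ in all cases.

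The hard part will be the uniqueness bookkeeping: making precise that a connected $k$-subgraph containing $uv$ but too large to fit on the short bridge must pass through a degree-$\ge 3$ endpoint with local degree $\ge 2$, and that the two endpoint-subgraphs coincide when both endpoints qualify. Everything else is a direct application of Corollary~\ref{cor:kdeg3} together with the length bound $l \le k-1$.
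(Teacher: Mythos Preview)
Your proof is correct and follows essentially the same route as the paper's: both arguments note that at least one endpoint of the maximal bridge has degree at least~$3$, use the size bound $|\Gamma''|\geq k+1>l$ to force any maximal $k$-subgraph through $uv$ to exit the bridge via a degree-$\geq 3$ endpoint where it has local degree at least~$2$, and then invoke Corollary~\ref{cor:kdeg3} at that endpoint (and, when both endpoints have degree~$\geq 3$, use the bound $l\leq k-1$ to see that the two resulting maximal $k$-subgraphs coincide). Your cut-edge bookkeeping is a slightly more explicit version of the paper's connectivity argument, but the substance is the same.
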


\begin{proof}
As $uv$ is not part of any cycle in $\Gamma$, 
$uv$ is a bridge of length 2. 
Note 
that a longest bridge $(x_1, \dots , x_l)$ containing $uv$ is unique, 
because as long as the degree of at least one of the path's end vertices is 2 in $\Gamma$, 
the path can be extended in that direction. 
The obtained path is the unique longest bridge containing $uv$. 

Let $\Gamma'$ be a maximal $k$-subgraph containing $uv$, 
and assume $l\leq k-1$. 
Note 
that the distance of $x_1$ and $x_l$ is $l-1 \leq k-2$. 
As $\abs{\Gamma} \geq k+1$, 
at least one of $x_1$ and $x_l$ has degree at least 3 in $\Gamma$, 
say $x_1$. 
We distinguish two cases according to the degree of $x_l$. 

Assume first that $x_l$ is of degree 1. 
As $\Gamma'$ is a connected subgraph having at least $k+1$ vertices, 
$\Gamma'$ must contain $x_1$ and at least two of its neighbors. 
Then by Corollary~\ref{cor:kdeg3} it contains all vertices of $\Gamma$ of distance at most $k-1$ from $x_1$. 
In particular, 
$\Gamma'$ must contain the bridge $(x_1, \dots , x_l)$.
However, 
there is a unique (nontrivial) maximal $k$-subgraph $\Gamma_1'$ containing $x_1$ and two of its neighbors by Corollary~\ref{cor:kdeg3}, 
and thus $\Gamma' = \Gamma_1'$ is that unique maximal $k$-subgraph. 

Assume now that $x_l$ is of degree at least 3. 
As $\Gamma'$ is a connected subgraph having at least $k+1$ vertices, 
$\Gamma'$ must contain $x_1$ and at least two of its neighbors, 
or $x_l$ and at least two of its neighbors. 
If $\Gamma'$ contains $x_1$ and at least two of its neighbors, 
then by Corollary~\ref{cor:kdeg3} it contains all vertices of $\Gamma$ of distance at most $k-1$ from $x_1$. 
In particular, 
$\Gamma'$ must contain the bridge $(x_1, \dots , x_l)$ and all of the neighbors of $x_l$. 
Similarly, 
one can prove that if $\Gamma'$ contains $x_l$ and two of its neighbors, 
then it also contains the bridge $(x_1, \dots , x_l)$ and all of the neighbors of $x_1$. 
However, 
there is a unique (nontrivial) maximal $k$-subgraph $\Gamma_1'$ containing $x_1$ and two of its neighbors by Corollary~\ref{cor:kdeg3}, 
and also a unique (nontrivial) maximal $k$-subgraph $\Gamma_l'$ containing $x_l$ and two of its neighbors by Corollary~\ref{cor:kdeg3}. 
Therefore $\Gamma'$ must equal to both $\Gamma_1'$ and $\Gamma_l'$, 
and hence is unique. 
\end{proof}

In particular, 
in non-cycle graphs trivial maximal $k$-subgraphs or intersections of two different maximal $k$-subgraphs consist of edges that are contained in long bridges 
(having length at least $k$). 
The key observation in proving item~(\ref{it:defectkmain}) of Theorem~\ref{thm:main} is that a defect $k$ group cannot move a vertex across a bridge of length at least $k$: 

\begin{lem}\label{lem:bridge}
Let $2\leq k\leq l$,  
$\Gamma_1$ and $\Gamma_2$ be disjoint connected subgraphs of the connected graph $\Gamma$, 
and $\left( x_1, x_2, \dots,  x_l \right)$ be a bridge in $\Gamma$
such that
$x_1 \dots,  x_{l} \notin \Gamma_1 \cup \Gamma_2$, 
$x_1$ has only neighbors in $\Gamma_1$ (except for $x_2$), 
$x_l$ has only neighbors in $\Gamma_2$ (except for $x_{l-1}$). 
Assume $\Gamma$ has no more vertices than $\Gamma_1 \cup \Gamma_2 \cup \left( x_1, \dots, x_l \right)$. 
Let the defect set be $V_k = \halmaz{x_1, \dots , x_k}$.
Then for any $u \in \Gamma_1$ and $v \in \Gamma_2$ there does not exist any permutation in $G_{k,V_k}$ which moves $u$ to $v$.
\end{lem}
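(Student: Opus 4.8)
\emph{Setup and the walk observation.} The plan is to write an arbitrary $g\in G_{k,V_k}$ as $g = s\restriction_{V\setminus V_k}$ for a single $s=e_1\cdots e_r\in S$ with $Vs = V\setminus V_k$, and to track the orbit of each vertex through the partial products $s_t = e_1\cdots e_t$ (with $s_0=\mathrm{id}$). If $e_{t+1}=e_{ab}$, then $w s_{t+1}$ equals $b$ when $w s_t = a$ and equals $w s_t$ otherwise; hence the trajectory $\left( w s_t \right)_t$ is a walk in $\Gamma$, each step either staying put or traversing one edge. In particular, if some permutation in $G_{k,V_k}$ sent $u$ to $v$, the $u$-trajectory would be a walk from $\Gamma_1$ to $\Gamma_2$; since the bridge is the only connection between the two sides and its interior vertices have degree $2$, this walk would have to pass through $x_1,\dots,x_l$ in turn.

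\emph{The hole reformulation.} The decisive structural fact is that the tokens starting at vertices of $V\setminus V_k$ never collide: if $w s_t = w' s_t$ for distinct $w,w'\in V\setminus V_k$, the equality persists for all later times, contradicting that $s\restriction_{V\setminus V_k}$ is a bijection of $V\setminus V_k$. Thus $s_t$ is injective on $V\setminus V_k$ at every time $t$, so these $n-k$ tokens always occupy $n-k$ distinct positions, leaving exactly $k$ positions free of such tokens; call these the $k$ \emph{holes} and let $F_t$ denote their set. A single collapsing either fixes the holes or slides one token of $V\setminus V_k$ into an adjacent hole, pushing that hole back along the edge. Hence the configuration evolves like a token-sliding puzzle with exactly $k$ holes, and the $u$-token advances along the bridge only by stepping into a hole in front of it.

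\emph{The potential.} Assign weights $\mathrm{wt}(w)=0$ for $w\in\Gamma_1$, $\mathrm{wt}(x_j)=j$ for $1\le j\le l$, and $\mathrm{wt}(w)=l+1$ for $w\in\Gamma_2$; by the bridge hypotheses every edge of $\Gamma$ joins vertices whose weights differ by at most $1$. With $p_t:=\mathrm{wt}(u s_t)$, I would consider
\[
\Phi(t) = p_t + \abs{\halmazpont{f\in F_t}{\mathrm{wt}(f) > p_t}}.
\]
I claim $\Phi$ is constant while the $u$-token sits on the bridge. When the $u$-token itself steps from $x_j$ to $x_{j+1}$, the hole in front of it drops one level behind, so $p$ rises by $1$ while the count drops by $1$; the same holds for the two boundary steps $\Gamma_1\to x_1$ and $x_l\to\Gamma_2$. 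When the $u$-token is parked at a bridge vertex $x_j$, no other move can change the count, because a hole could only alter it by crossing the threshold between weight $j$ and weight $j+1$, and the unique edge joining those two weight levels has $x_j$ as an endpoint; since $x_j$ carries the $u$-token (hence is not a hole), such a crossing would force a move of the $u$-token itself. This is the step I expect to be the main obstacle, and it rests squarely on the no-collision property together with the uniqueness of the bridge edge between consecutive weight levels.

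\emph{Conclusion.} Suppose for contradiction the $u$-token reaches $\Gamma_2$; let $T$ be the first such time and $t_0<T$ the last time before $T$ at which it lies in $\Gamma_1$. For $t_0<t<T$ one has $p_t\in\halmaz{1,\dots,l}$, so the $u$-token is on the bridge throughout, and the two crossing steps at the ends are also $\Phi$-preserving; hence $\Phi(t_0)=\Phi(T)$. Now $\Phi(t_0)=0+(k-h_0)$, where $h_0\ge 0$ is the number of holes lying in $\Gamma_1$ at time $t_0$, while $\Phi(T)=(l+1)+0=l+1$ since no weight exceeds $l+1$. This forces $h_0 = k-l-1<0$ because $k\le l$, a contradiction. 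Therefore no permutation in $G_{k,V_k}$ moves $u$ to $v$.
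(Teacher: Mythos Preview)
Your token-sliding argument is essentially correct and is genuinely different from the paper's proof. The paper does not follow a single token; instead it starts from the idempotent power $s_0$ of $g$, so that every partial product $s_d$ has defect exactly $k$, and then tracks the \emph{images} of the whole sets $\Gamma_1$ and $\Gamma_2$ under $s_d$. Two indices $i(s_d)$ and $j(s_d)$ record how far $\Gamma_1 s_d$ has advanced to the right and how far $\Gamma_2 s_d$ has advanced to the left along the bridge; each changes by at most $1$ per step and not both at once, so the first time $i\geq j$ forces $\Gamma_1 s_d \cap \Gamma_2 s_d \neq \emptyset$, contradicting injectivity. Your approach replaces this ``two fronts meet'' picture with a potential counting holes ahead of the $u$-token, which is a pleasant $15$-puzzle reformulation and arguably more intuitive; the paper's approach avoids introducing the potential but needs the preliminary step of replacing $g$ by $s_0 g$ to guarantee constant defect along the way.

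There is one slip you should repair. Your claim that the crossing step $x_l\to\Gamma_2$ is $\Phi$-preserving is not correct in general: when $p_t$ jumps from $l$ to $l+1$, the count $\abs{\halmazpont{f\in F_t}{\mathrm{wt}(f)>p_t}}$ drops by the number of holes in $\Gamma_2$ at time $T-1$, not necessarily by $1$, because weight level $l+1$ consists of all of $\Gamma_2$ rather than a single vertex (unlike every intermediate level and unlike the entry step $\Gamma_1\to x_1$, where the target level is the singleton $\{x_1\}$). The fix is immediate: stop at $T-1$ instead. You have shown $\Phi$ is constant on $[t_0,T-1]$, so $k-h_0=\Phi(t_0)=\Phi(T-1)=l+h_2$ with $h_2\geq 1$ (there is at least the hole the $u$-token is about to occupy). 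Hence $k\geq l+1+h_0\geq l+1$, contradicting $k\leq l$. Alternatively, note that the final crossing can only \emph{decrease} $\Phi$, so $\Phi(T)\leq\Phi(t_0)$ already gives $l+1\leq k-h_0\leq k$, the same contradiction.
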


\begin{proof}
Let $S = S_{\Gamma}$. 
Assume that there exists $u \in \Gamma_1$, 
$v \in \Gamma_2$, 
and a transformation $g \in S$ of defect $V_k$ such that $g \restriction_{V \setminus V_k} \in G_{k,V_k}$ and $ug=v$. 
Let $s_0 \in G_{k,V_k}$ be the unique idempotent power of $g$, 
that is $s_0$ is a transformation of defect $V_k$ that acts as the identity on $\Gamma \setminus V_k$. 
Then there exists a series of elementary collapsings $e_1, \dots , e_m$ such that $g = e_1 \dots e_m$.
For every $1 \leq d \leq m$ let $s_d = s_0e_1 \dots e_d$. 
Now, $s_m = s_0 e_1 \dots e_m = s_0 g = gs_0 = g$. 
In particular, 
both $s_m$ and $s_0$ are of defect $k$, 
hence $s_d$ is of defect $k$ for all $1\leq d \leq m$. 
Consequently, 
$\abs{\Gamma_1 s_d} = \abs{\Gamma_1}$, 
$\abs{\Gamma_2 s_d} = \abs{\Gamma_2}$ and 
$\Gamma_1 s_d \cap \Gamma_2 s_d = \emptyset$ for all $1\leq d \leq m$. 

For an arbitrary $s \in S$, 
let 
\begin{align*}
i(s) &= 
\begin{cases}
0, & \text{if } \Gamma_1 s \subseteq \Gamma_1, \\
l+1, & \text{if } \Gamma_1 s \not\subseteq \Gamma_1 \cup \halmaz{x_1, \dots , x_l}, \\
\displaystyle{\min_{1\leq i \leq l}\halmaz{\Gamma_1 s \subseteq \Gamma_1 \cup \halmaz{x_1, \dots , x_i}}}, & \text{otherwise}.
\end{cases}
\intertext{Similarly, let}
j(s) &= 
\begin{cases}
l+1, & \text{if } \Gamma_2 s \subseteq \Gamma_2, \\
0, & \text{if } \Gamma_2 s \not\subseteq \Gamma_2 \cup \halmaz{x_1, \dots , x_l}, \\
\displaystyle{\max_{1\leq j \leq l}\halmaz{\Gamma_2 s \subseteq \Gamma_2 \cup \halmaz{x_j, \dots , x_l}}}, & \text{otherwise}.
\end{cases}
\end{align*}
Note 
that for arbitrary $s \in S$ and elementary collapsing $e$, 
we have $\abs{i(s)-i({se})} \leq 1$,
$\abs{j(s)-j({se})} \leq 1$. 
Furthermore, 
both $\abs{i(s_d)-i({s_de})} = 1$ and $\abs{j(s_d)-j({s_de})}=1$ cannot happen at the same time for any $1\leq d \leq m$, 
because that would contradict $\Gamma_1 s_d \cap \Gamma_2 s_d \neq \emptyset$. 

For $s_0$ we have $i({s_0}) = 0  < l+1 = j({s_0})$, 
for $s_m$ we have $i({s_m}) = l+1 \geq j({s_m})$. 
Let $1 \leq d \leq m$ be minimal such that $i({s_d}) \geq j({s_d})$. 
Then $i({s_{d-1}}) < j({s_{d-1}})$. 
From $s_{d-1}$ to $s_d$ either $i$ or $j$ can change and by at most 1, 
thus $i({s_d}) = j({s_d})$. 
If $i({s_d}) = j({s_d}) \in \halmaz{1, \dots , l}$, 
then $x_{i(s_d)} \in \Gamma_1 s_d \cap \Gamma_2 s_d$, 
contradicting $\Gamma_1 s_d \cap \Gamma_2 s_d = \emptyset$. 
Thus $i({s_d}) = j({s_d}) \notin \halmaz{1, \dots , l}$. 
Assume $i(s_d) = j(s_d) = l+1$, 
the case $i(s_d) = j(s_d) = 0$ can be handled similarly.

Now, 
$j(s_d) = l+1$ yields $\Gamma_2 s_d \subseteq \Gamma_2$. 
Furthermore, 
$\abs{\Gamma_2 s_d} = \abs{\Gamma_2}$, 
thus $\Gamma_2 s_d = \Gamma_2$. 
From $i(s_d) = l+1$ we have $\Gamma_1 s_d \cap \Gamma_2 \neq \emptyset$. 
Thus $\Gamma_1 s_d \cap \Gamma_2 s_d = \Gamma_1 s_d \cap \Gamma_2 \neq \emptyset$, 
a contradiction. 
\end{proof}

\begin{cor}\label{cor:dir}
Let $\Gamma_1$ and $\Gamma_2$ be connected subgraphs of $\Gamma$ such that $\Gamma_1 \cap \Gamma_2$ is a length $k$ bridge in $\Gamma$. 
Let $V_k = \Gamma_1 \cap \Gamma_2$ be the defect set. 
Let $G_i$ be the defect $k$ group of $\Gamma_i$, 
$G$ be the defect $k$ group of $\Gamma_1 \cup \Gamma_2$. 
Then 
\[
G = G_1 \times G_2. 
\]
\end{cor}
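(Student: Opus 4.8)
The plan is to prove the two inclusions $G_1\times G_2\le G$ and $G\le G_1\times G_2$ separately, identifying each $G_i$ with its image inside $G$ and viewing $G$ as a permutation group on the disjoint union $A\sqcup B$, where $A=\Gamma_1\setminus V_k$ and $B=\Gamma_2\setminus V_k$. Note that $(\Gamma_1\cup\Gamma_2)\setminus V_k=A\sqcup B$, since $A\cap B=(\Gamma_1\cap\Gamma_2)\setminus V_k=\emptyset$ because $\Gamma_1\cap\Gamma_2=V_k$ is exactly the bridge.

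First I would establish $G_1\times G_2\le G$. Each $\Gamma_i$ is a connected subgraph of $\Gamma_1\cup\Gamma_2$ containing the whole bridge $V_k$, so it has at least $k$ vertices (and, whenever its defect $k$ group is nontrivial, at least $k+1$). Lemma~\ref{lem:simple} then embeds $G_1$ and $G_2$ into $G$ as subgroups that act on $A$, respectively $B$, and fix every remaining vertex. Since the supports $A$ and $B$ are disjoint, the two images commute elementwise and intersect trivially, whence $\left< G_1,G_2\right> = G_1\times G_2\le G$.

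The heart of the matter is the reverse inclusion, and it splits into a \emph{no-crossing} step and a \emph{projection} step. For no-crossing I would invoke Lemma~\ref{lem:bridge} with the bridge $\left( x_1,\dots,x_k \right)$ of length $l=k$, taking its two banks to be $A$ and $B$: the interior bridge vertices have degree $2$, so $x_1$ attaches only to the $A$-side and $x_k$ only to the $B$-side, and the whole graph is $A\cup\halmaz{x_1,\dots,x_k}\cup B$. Lemma~\ref{lem:bridge} then yields that no defect $k$ permutation with defect set $V_k$ moves any $u\in A$ to any $v\in B$, so every $g\in G$ preserves the partition $A\sqcup B$, and $g\mapsto\left(g\restriction_A,\,g\restriction_B\right)$ is an injective homomorphism $G\hookrightarrow \mathrm{Sym}(A)\times\mathrm{Sym}(B)$. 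For the projection step I would write $g$ as a product of elementary collapsings of $\Gamma_1\cup\Gamma_2$; since no edge joins $A$ to $B$, each such collapsing is an edge of $\Gamma_1$ or of $\Gamma_2$ (bridge edges belong to both). Deleting the collapsings lying strictly on the $B$-side should leave a word in $S_{\Gamma_1}$ that still realizes $g\restriction_A$ as a defect $k$ permutation of $\Gamma_1$, giving $g\restriction_A\in G_1$ and, symmetrically, $g\restriction_B\in G_2$; hence $G\le G_1\times G_2$, and combined with the first step $G=G_1\times G_2$.

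The hard part will be making the projection step rigorous. Two things need care. First, to apply Lemma~\ref{lem:bridge} I must extract from the hypothesis ``$\Gamma_1\cap\Gamma_2$ is a length-$k$ bridge'' that $A$ and $B$ genuinely lie on opposite banks with clean attachment at $x_1$ and $x_k$; this uses the degree-$2$ condition on the interior vertices together with $\Gamma_1\cap\Gamma_2=V_k$. Second, and more delicately, justifying that deleting the $B$-side collapsings does not disturb $g\restriction_A$ requires showing that, throughout the computation, every vertex originating in $A$ remains inside $A\cup V_k$ and never crosses into $B$. This is exactly the position-tracking phenomenon controlled by the functions $i(s)$ and $j(s)$ in the proof of Lemma~\ref{lem:bridge}, so the cleanest route is probably to rerun that tracking argument once more; alternatively, once both projections are seen to be well defined one can conclude $\rho_1(G)\subseteq G_1$ and $\rho_2(G)\subseteq G_2$ by an order count, since $G_1\times G_2\le G\hookrightarrow G_1\times G_2$ forces equality.
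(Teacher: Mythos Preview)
Your overall strategy matches the paper's: Lemma~\ref{lem:simple} gives $G_1\times G_2\le G$, and Lemma~\ref{lem:bridge} (applied to $A=\Gamma_1\setminus V_k$ and $B=\Gamma_2\setminus V_k$) supplies the no-crossing statement for the reverse inclusion. The paper is in fact terser than you are at the final step---after no-crossing it simply writes ``Therefore $G\le G_1\times G_2$''---so you are right to isolate the projection step as the place where something substantive remains to be checked.

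Your projection argument (delete the $B$-side collapsings and rerun the $i(s)/j(s)$ tracking from the proof of Lemma~\ref{lem:bridge}) is correct and is the natural way to fill this in. That tracking argument actually shows $i(s_d)<j(s_d)$ for \emph{every} intermediate $d$: the contradiction derived there uses only the minimality of $d$, not the terminal crossing hypothesis. Hence $A s_d\subseteq A\cup V_k$ throughout, and no deleted collapsing ever touches the $A$-trajectory. Prepending an idempotent of $S_{\Gamma_1}$ with range $A$ then produces an element of $S_{\Gamma_1}$ witnessing $g\restriction_A\in G_1$, and symmetrically for $G_2$.

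However, the ``alternative'' you propose at the end is circular. No-crossing only yields $G\hookrightarrow\mathrm{Sym}(A)\times\mathrm{Sym}(B)$, not $G\hookrightarrow G_1\times G_2$; from $|G_1|\,|G_2|\le|G|\le|\rho_A(G)|\,|\rho_B(G)|$ together with $G_i\le\rho_i(G)$ you cannot conclude $\rho_i(G)=G_i$. So the sentence ``$G_1\times G_2\le G\hookrightarrow G_1\times G_2$ forces equality'' presupposes exactly what you are trying to prove. The projection argument (or something equivalent) is genuinely required; there is no cardinality shortcut here.
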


\begin{proof}
By Lemma~\ref{lem:simple} we have $G_1, G_2 \leq G$. 
Since $G_1$ and $G_2$ act on disjoint vertices, their elements commute.
Thus $G_1\times G_2 \leq G$. 
Now, $V_k$ is a bridge of length $k$, 
thus by Lemma~\ref{lem:bridge} 
(applied to the disjoint subgraphs $\Gamma_1 \setminus V_k$ and $\Gamma_2 \setminus V_k$) 
there exists no element of $G$ moving a vertex from $\Gamma_1$ to $\Gamma_2$ or vice versa. 
Therefore $G \leq G_1\times G_2$.
\end{proof}

Finally, 
we are ready to prove item~(\ref{it:defectkmain}) of Theorem~\ref{thm:main}. 

\begin{proof}[Proof of item~(\ref{it:defectkmain}) of Theorem~\ref{thm:main}.]
If $\Gamma$ is a cycle, 
then its defect $k$ group is $Z_{n-k}$ by Lemma~\ref{lem:cycle}. 
Otherwise, 
we prove the theorem by induction on the number of maximal $k$-subgraphs of $\Gamma$. 
If $\Gamma$ is a maximal $k$-subgraph, 
then the theorem holds,
and the defect $k$ group of $\Gamma$ is $S_{n-k}$. 
In the following we assume that $\Gamma$ contains $m$-many maximal $k$-subgraphs for some $m\geq 2$, 
and that the theorem holds for all graphs with at most $(m-1)$-many maximal $k$-subgraphs. 

We consider two cases. 
Assume first that there exists a degree 1 vertex $x_1 \in \Gamma$, 
such that there exists a path $(x_1, \dots, x_{k+1})$ which is a bridge. 
Let $\Gamma_1$ be the path $(x_1, \dots, x_{k+1})$, 
and let $\Gamma_2$ be $\Gamma \setminus \halmaz{x_1}$. 
Now, 
$\Gamma_1$ is a trivial maximal $k$-subgraph, 
hence $\Gamma_2$ contains the same maximal $k$-subgraphs as $\Gamma$ except $\Gamma_1$. 
Furthermore, 
$\Gamma_2$ is connected, 
and cannot be a cycle because the degree of $x_2$ in $\Gamma_2$ is 1. 
Let the sizes of the maximal $k$-subgraphs of $\Gamma_2$ be $n_2, \dots , n_m$, 
then by induction the defect $k$ group of $\Gamma_1$ is $S_{n_2-k} \times \dots \times S_{n_m-k}$. 
The size of $\Gamma_1$ is $n_1 = k+1$, 
its defect $k$-group is $S_{n_1-k}$. 
Furthermore, 
$\Gamma_1 \cap \Gamma_2$ is a bridge of length $k$. 
By Corollary~\ref{cor:dir} the defect $k$-group of $\Gamma$ is $S_{n_1-k} \times S_{n_2-k} \times \dots \times S_{n_m-k}$.

In the second case, no degree 1 vertex $x_1$ is in a path $(x_1, \dots , x_{k+1})$ which is a bridge. 
Then any maximal bridge $(x_1, \dots, x_l)$  with a degree 1 vertex  $x_1$ has length $l \leq k$, 
and, as the bridge cannot be extended, 
$x_l$ must have degree at least 3. 
Moreover, $(x_1, ... , x_l)$ lies in a maximal $k$-subgraph containing $x_l$ and all its neighbors by Lemma~\ref{lem:longbridgeedge}~and~Corollary~\ref{cor:kdeg3}.  
In particular every bridge in $\Gamma$ of length at least $k+1$ occurs between nodes of degree at least 3. 
Hence every bridge of length at least $k+1$ occurs between two nontrivial maximal $k$-subgraphs by Corollary~\ref{cor:kdeg3}. 
For every vertex $v$ having degree at least 3 in $\Gamma$, 
let $\Gamma_v$ be the unique maximal $k$-subgraph containing $v$ and all its neighbors (Corollary~\ref{cor:kdeg3}). 
By definition, 
these are all the nontrivial maximal $k$-subgraphs of $\Gamma$. 

Let $\Gamma^k$ be the graph whose vertices are the nontrivial maximal $k$-subgraphs, 
and $\Gamma_u\Gamma_v$ is an edge in $\Gamma^k$ (for $\Gamma_u \neq \Gamma_v$) if and only if 
there exists a bridge in $\Gamma$ between a vertex $u' \in \Gamma_u$ of degree at least 3 in $\Gamma_u$ 
and a vertex $v' \in \Gamma_v$ of degree at least 3 in $\Gamma_v$. 
By Corollary~\ref{cor:2edge}, 
$\Gamma_u = \Gamma_v$ if $u$ and $v$ are in the same 2-edge connected component. 
As the 2-edge connected components of $\Gamma$ form a tree, 
the graph $\Gamma^k$ is a tree. 

Now, $\Gamma^k$ has $m$ vertices. 
Let $\Gamma_1$ be a leaf in $\Gamma^k$, 
and let $\Gamma_m$ be its unique neighbor in $\Gamma^k$. 
Let $x_1 \in \Gamma_1$ and $x_l \in \Gamma_m$ be the unique vertices of degree at least 3 in $\Gamma_i$ ($i \in \halmaz{1, l}$) such that there exists a bridge $P = ( x_1, \dots, x_l)$ in $\Gamma$. 
Note that the length of $P$ is at least $k$, 
otherwise 
$\Gamma_1 = \Gamma_m$ would follow by Lemma~\ref{lem:longbridgeedge}. 
Furthermore, 
any other bridge having an endpoint in $\Gamma_1$ must be of length at most $k$, 
because every degree 1 vertex is of distance at most $k-1$ from a vertex of degree at least 3. 
Thus every bridge other than $P$ and having an endpoint in $\Gamma_1$ is a subset of $\Gamma_1$ by Corollary~\ref{cor:kdeg3}. 

Let $\Gamma_2 = \left( \Gamma \setminus \Gamma_1 \right) \cup P$. 
Now, 
$\Gamma_1$ is a maximal $k$-subgraph, 
$\Gamma_2$ has one less maximal $k$-subgraphs than $\Gamma$. 
Furthermore, 
$\Gamma_2$ is connected, 
because every bridge other than $P$ and having an endpoint in $\Gamma_1$ is a subset of $\Gamma_1$. 
Finally, 
$\Gamma_2$ is not a cycle, 
because it contains the vertex $x_1$ which is of degree 1 in $\Gamma_2$. 
Let the sizes of the maximal $k$-subgraphs of $\Gamma_2$ be $n_2, \dots , n_m$, 
then by induction the defect $k$ group of $\Gamma_1$ is $S_{n_2-k} \times \dots \times S_{n_m-k}$. 
Let the size of $\Gamma_1$ be $n_1$, 
its defect $k$-group is $S_{n_1-k}$. 
Furthermore, 
$\Gamma_1 \cap \Gamma_2$ is a bridge of length $k$. 
By Corollary~\ref{cor:dir} the defect $k$-group of $\Gamma$ is $S_{n_1-k} \times S_{n_2-k} \times \dots \times S_{n_m-k}$. 
\end{proof}

\section{An algorithm to calculate the defect $k$ group}\label{sec:algorithm}
Note 
that by items~(\ref{it:graph})~and~(\ref{it:2-vertex}) of Theorem~\ref{thm:main} the defect 1 group can be trivially computed in $O \left( \abs{E} \right)$ time by first determining the 2-vertex connected components \cite{Tarjan2vertex}, 
and 
whether each is a cycle, 
the exceptional graph (Figure~\ref{fig:exceptionalgraph})
or if not, whether or not it is bipartite.  

For $k \geq 2$ one can check first 
if $\Gamma$ is a cycle (and then the defect group is $Z_{n-k}$) or a path (and then the defect group is trivial). 
In the following, 
we give a linear algorithm (running in $O\left( \abs{E} \right)$ time) to determine the maximal $k$-subgraphs ($k \geq 2$) of a connected graph $\Gamma$ 
having $n$ vertices, $\abs{E}$ edges where at least one vertex is of degree at least 3.

During the algorithm we color the vertices. 
Let us call a maximal subgraph with vertices having the same color a \emph{monochromatic component}. 
First, 
one finds all 2-edge connected components and the tree of two-edge connected components in $O \left( \abs{E} \right)$ time using e.g.~\cite{Tarjan2edge}. 
Color the vertices of the nontrivial (i.e.\ having size greater than 1) 2-edge connected components such that two distinct vertices have the same color if and only if they are in the same nontrivial 2-edge connected component. 
Furthermore, 
color the uncolored vertices having degree at least 3 by different colors from each other and from the colors of the 2-edge connected components. 
Then the monochromatic components are each contained in a unique nontrivial maximal $k$-subgraph by Corollaries~\ref{cor:kdeg3}~and~\ref{cor:2edge} 
(a nontrivial maximal $k$-subgraph may contain more than one of these monochromatic components). 
Furthermore, 
the monochromatic components and the degree 1 vertices are connected by bridges. 
If any of the bridges connecting two monochromatic components is of length at most $k-1$, 
then recolor the two monochromatic components at the ends of the bridge and the vertices of the bridge by the same color, 
because these are contained in the same maximal $k$-subgraph by Corollary~\ref{cor:kdeg3}. 
Similarly, if any of the bridges connecting a monochromatic component and a degree 1 vertex is of length at most $k-1$, 
then recolor the monochromatic component and the vertices of the bridge by the same color, 
because these are contained in the same maximal $k$-subgraph by Lemma~\ref{lem:longbridgeedge}. 
Repeat recoloring along all bridges of length at most $k-1$ in $O \left( \abs{E} \right)$ time. 
Then we obtain monochromatic components $\Gamma_1, \dots , \Gamma_l$ connected by long bridges (i.e.\ bridges of length at least $k$), 
and possibly some long bridges to degree 1 vertices. 
Now, we have finished coloring. 

For every $1\leq i\leq l$,
 let $\Gamma_i'$ be the induced subgraph having all vertices of distance at most $k-1$ from $\Gamma_i$, 
which can be obtained in $O\left( \abs{E} \right)$ time by adding the appropriate $k-1$ vertices of the long bridges to the appropriate monochromatic component. 
Note 
that the obtained induced subgraphs are not necessarily disjoint. 
Then $\Gamma_1', \dots , \Gamma_l'$ are the nontrivial maximal $k$-subgraphs of $\Gamma$ by Lemma~\ref{lem:longbridgeedge}. 
Again, 
by Lemma~\ref{lem:longbridgeedge}, 
the trivial maximal $k$-subgraphs of $\Gamma$ are the paths containing exactly $k+1$ vertices in a long bridge. 
These can also be computed in $O \left( \abs{E} \right)$ time by going through all long bridges. 
By item~(\ref{it:defectkmain}) of Theorem~\ref{thm:main}, 
the defect $k$ group of $\Gamma$ as a permutation group is the direct product of the defect $k$ groups of $\Gamma_1', \dots \Gamma_l'$, 
and the defect $k$ groups of the trivial maximal $k$-subgraphs.

\section{Complexity of the flow semigroup of (di)graphs}\label{sec:cpx}

In this section we apply our results and the complexity lower bounds of \cite{ComplexityLowerBounds} 
to verify \cite[Conjecture~6.51i (1)]{wildbook} for 2-vertex connected graphs.
That is, 
we prove that the Krohn--Rhodes (or group-) complexity of the flow semigroup of a 2-vertex connected graph with $n$ vertices is $n-2$ (item~\ref{it:compl2vertex} of Theorem~\ref{thm:main}). 
Then we derive item~\ref{it:compl2edge} of Theorem~\ref{thm:main} as a further consequences of our results.

For standard definitions on wreath product of semigroups, 
we refer the reader to e.g.~\cite[Definition~2.2]{wildbook}. 
A finite semigroup $S$ is called \emph{combinatorial} if and only if every maximal subgroup of $S$ has one element. 
Recall that the \emph{Krohn--Rhodes (or group-) complexity of a finite semigroup $S$} (denoted by $\cpx{S}$) is the smallest non-negative integer $n$ such that $S$ is a homomorphic image of a subsemigroup of the iterated wreath product
\[
C_n \wr G_n \wr \dots \wr C_1 \wr G_1 \wr C_0, 
\]
where $G_1, \dots , G_n$ are finite groups, 
$C_0, \dots , C_n$ are finite combinatorial semigroups, 
and $\wr$ denotes the wreath product 
(for the precise definition, see e.g.~\cite[Definition~3.13]{wildbook}). 
The definition immediately implies that 
if a finite semigroup $S$ is the homomorphic image of a subsemigroup of $T$, 
then $\cpx{S} \leq \cpx{T}$. 
More can be found on the complexity of semigroups in e.g.~\cite[Chapter~3]{wildbook}. 
We need the following results on the complexity of semigroups. 

\begin{lem}[{\cite[Prop.~6.49(b)]{wildbook}}]\label{lem:K_n}
The flow semigroup $K_n$ of the complete graph on $n \geq 2$ vertices 
has $\cpx{K_n} = n-2$. 
\end{lem}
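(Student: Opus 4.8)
The plan is to compute the complexity of $K_n$ by establishing matching upper and lower bounds. For the lower bound I would exhibit a subsemigroup or homomorphic image of $K_n$ whose complexity is known to be at least $n-2$, using the complexity lower bound machinery of \cite{ComplexityLowerBounds} referenced in this section. The natural candidate is to exploit the rich subgroup structure: by Theorem~\ref{thm:defectk} (or item~(\ref{it:defectk}) of Theorem~\ref{thm:main}), since $K_n$ is $2$-edge connected and is not a cycle for $n \geq 4$, its defect $k$ group is the full symmetric group $S_{n-k}$ for every $1 \leq k \leq n-2$. Thus $K_n$ contains a long chain of symmetric groups sitting at successive defect levels, and one expects the complexity to grow by one each time we pass to a deeper defect. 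The lower-bound theorems of \cite{ComplexityLowerBounds} are designed precisely to read off complexity from such a tower of nontrivial maximal subgroups lying in a descending $\leq_{\mathcal{J}}$-chain, so I would verify that the idempotents realizing the defect $k$ groups for $k=0,1,\dots,n-2$ form such a chain and that consecutive groups are genuinely nonabelian simple (or have nonsolvable sections) where needed, yielding $\cpx{K_n} \geq n-2$.

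For the upper bound I would construct an explicit decomposition of $K_n$ as a homomorphic image of a subsemigroup of an iterated wreath product with exactly $n-2$ group levels. The idea is to peel off one ``collapse'' at a time: a transformation of defect $k$ can be understood as first choosing which vertices have been identified (combinatorial data) and then a permutation of the surviving $n-k$ points. I would build the wreath product so that the top group level handles the defect $1$ permutations in $S_{n-1}$, the next handles defect $2$ in $S_{n-2}$, and so on down to defect $n-2$, with combinatorial semigroups $C_i$ absorbing the bookkeeping of which points survive and how the collapsings act. Since there are $n-2$ nontrivial group coordinates (defects $1$ through $n-2$; defect $n-1$ has a one-point permutation group, which is trivial and hence combinatorial), this gives $\cpx{K_n} \leq n-2$.

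The main obstacle I anticipate is the lower bound rather than the upper bound. Producing a wreath-product decomposition achieving $n-2$ is essentially an unraveling of the transformation semigroup along its $\mathcal{J}$-classes and is routine once the defect structure is in hand. The delicate part is invoking the lower-bound criteria of \cite{ComplexityLowerBounds} correctly: one must check the precise hypotheses on the arrangement of the maximal subgroups $S_{n-1}, S_{n-2}, \dots, S_2$ in the ideal structure of $K_n$ and confirm that they force the complexity to increase at each level, rather than merely being present. In particular I expect the crux to be verifying that the relevant ``$\mathcal{R}$- or $\mathcal{L}$-chain'' condition of the lower-bound theorem is met, so that the nonsolvable groups at distinct defect levels cannot be compressed into fewer group layers. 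Since this lemma is cited as \cite[Prop.~6.49(b)]{wildbook}, I would in practice lean on Rhodes's established computation, using the lower-bound technology only to the extent needed to make the argument self-contained for the larger $2$-vertex connected result that follows.
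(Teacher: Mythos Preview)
The paper does not prove this lemma at all: it is stated with a citation to \cite[Prop.~6.49(b)]{wildbook} and used as a black box. So there is no ``paper's own proof'' to compare your proposal against, and you correctly note this at the end of your sketch.

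That said, a couple of remarks on your plan. Your lower-bound idea of using the tower $S_{n-1}, S_{n-2}, \dots$ at successive defect levels is morally what the paper does, but for the \emph{general} 2-vertex connected graph, in the proof of item~\ref{it:compl2vertex}. There the argument is not a single invocation of a $\mathcal{J}$-chain theorem; it is an induction on $n$ using only the defect~1 group and the $T_1$-semigroup lower bound (Lemma~\ref{lem:LB}), reducing to a smaller complete graph or cycle at each step. So the paper in effect contains a proof of $\cpx{K_n}\geq n-2$ as a special case of that argument. What the paper genuinely imports from \cite{wildbook} is the \emph{upper} bound $\cpx{K_n}\leq n-2$, which is then used to cap the complexity of every 2-vertex connected $\Gamma$ via $S_\Gamma \subseteq K_n$.

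Your upper-bound sketch (``peel off one collapse at a time'') is the right intuition, but be aware that this is where the real content of \cite[Prop.~6.49(b)]{wildbook} lies; turning that intuition into an honest wreath-product covering with only $n-2$ group coordinates is not automatic, and you would need either to reproduce Rhodes's construction or cite it. Since you explicitly say you would lean on the cited result, your plan is sound.
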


\begin{lem}[{\cite[Sec.~3.7]{ComplexityLowerBounds}}]\label{lem:F_n}
The complexity of the full transformation semigroup $F_n$ on $n$ points is $\cpx{F_n}=n-1$. 
\end{lem}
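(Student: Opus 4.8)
The plan is to prove the two inequalities $\cpx{F_n}\le n-1$ and $\cpx{F_n}\ge n-1$ separately, the first being essentially bookkeeping and the second carrying all the content.

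For the upper bound I would run the Krohn--Rhodes holonomy decomposition on the natural action of $F_n$ on $X=\halmaz{1,\dots,n}$. Stratifying the image subsets by cardinality, the holonomy group attached to the orbit of a $j$-element subset is the full symmetric group $S_j$, because $F_n$ induces every permutation of any set it maps onto. The resulting iterated wreath product therefore has group coordinates $S_n, S_{n-1},\dots,S_2$ (the one-element level contributes only the trivial group), interleaved with combinatorial reset components; counting the $n-1$ nontrivial group layers yields $\cpx{F_n}\le n-1$. This direction is routine once the holonomy machinery is in place.

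For the lower bound I would argue by induction on $n$, the base case being $\cpx{F_2}=1$ (the transposition generates a $Z_2$, so $F_2$ is noncombinatorial). The inductive step reduces to showing $\cpx{F_n}\ge\cpx{F_{n-1}}+1$. Here I would first record the easy half: the singular part of $F_n$---the transformations of rank $<n$, which by a classical theorem are generated by the rank-$(n-1)$ idempotents, i.e.\ by all elementary collapsings---is exactly the flow semigroup $K_n$ of the complete graph. Hence Lemma~\ref{lem:K_n} together with monotonicity of complexity under division gives $\cpx{F_n}\ge\cpx{K_n}=n-2$ immediately.

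The hard part---and the reason this statement is quoted rather than reproved---is closing the remaining gap of $+1$: one must show that adjoining the group of units $S_n$ on top of the aperiodic collapsing structure genuinely forces an additional group layer. Division and monotonicity alone cannot detect this, since a naive wreath-product estimate allows a group to sit \emph{for free} above an aperiodic semigroup. I would therefore invoke the lower-bound criterion of \cite{ComplexityLowerBounds}: one isolates inside $F_n$ the configuration consisting of a point stabilizer (a copy of $F_{n-1}$), the transitive $S_n$-action on the $n$ points, and the connecting rank-$(n-1)$ idempotents, and checks that this data satisfies the hypotheses of the criterion, which then certifies $\cpx{F_n}\ge\cpx{F_{n-1}}+1$. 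Combined with the inductive hypothesis $\cpx{F_{n-1}}\ge n-2$ this gives $\cpx{F_n}\ge n-1$, matching the upper bound and establishing $\cpx{F_n}=n-1$.
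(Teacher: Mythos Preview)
The paper does not prove this lemma: it is stated as a citation to \cite{ComplexityLowerBounds} and used as a black box, so there is no proof in the paper to compare your proposal against.

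That said, your sketch is reasonable in outline but makes the lower bound look harder than it is with the tools already on the table. Note that $F_n$ is itself a noncombinatorial $T_1$-semigroup: it is generated by the $\mathcal L$-chain $S_n \succeq_{\mathcal L} \{e\}$ for any rank-$(n{-}1)$ idempotent $e$. Lemma~\ref{lem:LB} then gives $\cpx{F_n}\ge 1+\cpx{EG(F_n)}$. By Howie's theorem the singular part of $F_n$ is exactly the idempotent-generated subsemigroup $K_n$, so $EG(F_n)=K_n\cup\{1\}$ and hence $\cpx{EG(F_n)}=\cpx{K_n}=n-2$ by Lemma~\ref{lem:K_n}. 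This yields $\cpx{F_n}\ge n-1$ in one step, without the inductive detour through $F_{n-1}$ or the somewhat vague ``configuration'' you describe. (One must of course check that the proof of Lemma~\ref{lem:K_n} in \cite{wildbook} does not itself rely on Lemma~\ref{lem:F_n}, or else argue for $\cpx{K_n}\ge n-2$ independently; your inductive scheme would face the same issue.) Your upper bound via holonomy is fine.
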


The well-known \emph{$\mathcal{L}$-order} is a pre-order, 
i.e.\  a transitive and reflexive binary relation,
on the elements of a semigroup $S$ given by 
$s_1 \succeq_{\mathcal{L}} s_2$ if $s_1 = s_2$ or $ss_1 = s_2$ for  some $s \in S$. 
The \emph{$\mathcal{L}$-classes}  of $S$ are the equivalence classes of the $\mathcal{L}$-order. 
The $\mathcal L$-classes are thus partially ordered by  $L_1 \succeq_{\mathcal{L}} L_2$ if and only if   $SL_1 \cup L_1 \supseteq SL_2 \cup L_2$.
One says that a finite semigroup $S$ is a \emph{$T_1$-semigroup} if it is generated by some $\succeq_{\mathcal{L}}$-chain of  its $\mathcal{L}$-classes, i.e.\ 
if  there exist $\mathcal L$-classes $L_1 \succeq_{\mathcal{L}} \dots \succeq_{\mathcal{L}} L_m$ of  $S$ such that $S = \langle  L_1 \cup \ldots \cup  L_m \rangle$.
Equivalently, $S$ is a $T_1$-semigroup if there exist $U_i \subseteq L_i $ ($1 \leq i \leq m$)  for such a chain of $\mathcal L$-classes of $S$ such that $S=  \langle U_1 \cup \ldots \cup U_m \rangle$.

\begin{lem}[{\cite[Lemma 3.5(b)]{ComplexityLowerBounds}}]\label{lem:LB}
Let $S$ be a noncombinatorial $T_1$-semigroup. 
Then
  \[
  \cpx{S} \geq 1 + \cpx{EG(S)},
\]
  where $EG(S)$ is the subsemigroup of $S$ generated by all its idempotents.
\end{lem}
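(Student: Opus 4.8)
The inequality $\cpx{EG(S)} \le \cpx{S}$ is immediate, since $EG(S)$ is a subsemigroup of $S$ and complexity is monotone under division (a subsemigroup is in particular a homomorphic image of a subsemigroup); thus the entire content of the lemma is the strict gain of one group layer, and the plan is to show that the noncombinatorial $T_1$-structure forces a nontrivial group to sit strictly above everything needed to build $EG(S)$. First I would record that ``noncombinatorial'' supplies a maximal subgroup of $S$ with more than one element, so that $\cpx{S} \ge 1$ and the right-hand side is meaningful even when $EG(S)$ is combinatorial; this pins down that the extra layer must come from the group part of $S$ that is not itself idempotent-generated.

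For the main argument I would fix an optimal decomposition realizing $c := \cpx{S}$, i.e.\ a division of $S$ into an iterated wreath product $C_c \wr G_c \wr \dots \wr G_1 \wr C_0$ with exactly $c$ group layers, and then use the generating $\mathcal{L}$-chain $L_1 \succeq_{\mathcal{L}} \dots \succeq_{\mathcal{L}} L_m$ to locate the group structure of $S$ in the outermost group layer. The point of the $T_1$ hypothesis is that the $\succeq_{\mathcal{L}}$-order of the classes is compatible with the wreath coordinates: elements of a $\succeq_{\mathcal{L}}$-maximal class can only enter as generators, and their group parts are exactly what feeds the outermost group; every element lying strictly $\mathcal{L}$-below them acts on that outermost coordinate in a flattened, combinatorial fashion. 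I would then check that each idempotent of $S$ lies $\mathcal{L}$-below (or acts as a constant relative to) those generators, so that every idempotent, and hence all of $EG(S)$, can be represented using only the remaining layers. This would exhibit a division $EG(S) \prec C' \wr G_{c-1} \wr \dots \wr G_1 \wr C_0'$ with $c-1$ group layers, giving $\cpx{EG(S)} \le c-1$, which is precisely the claim.

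The main obstacle will be the confinement step: showing rigorously that products of idempotents never reconstruct the outermost group layer, so that $EG(S)$ genuinely loses one layer rather than merely avoiding it on the chosen generators. This is exactly where the $T_1$ hypothesis is indispensable, since without the $\mathcal{L}$-chain one could manufacture group elements of the top class from lower classes and the confinement would collapse. I would control this either by inducting on the length $m$ of the chain, peeling off the top $\mathcal{L}$-class at each stage and tracking how $EG(\cdot)$ behaves, or by invoking the standard lower-bound (Presentation Lemma / Depth Decomposition) machinery for complexity, which packages precisely this ``the top group is essential and idempotent-free'' phenomenon; in either route the delicate point is to rebuild the combinatorial factors $C_i$ and the connecting actions after deleting one group layer without silently introducing a spurious group layer elsewhere.
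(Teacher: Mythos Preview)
The paper does not prove this lemma: it is quoted from \cite[Lemma~3.5(b)]{ComplexityLowerBounds} and used as a black box, so there is no in-paper argument to compare your proposal against.

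On the substance of your sketch: the intuition is right, but the plan has a genuine gap rather than merely missing details. You propose to fix an \emph{arbitrary} optimal wreath decomposition of $S$ and then argue that idempotents avoid the outermost group coordinate. But coordinates in a generic division have no obligation to respect the $\mathcal{L}$-order of $S$; your assertion that elements strictly $\mathcal{L}$-below the top class act on the outermost coordinate ``in a flattened, combinatorial fashion'' is precisely the content to be proved, not something one can read off from the $T_1$ hypothesis. (A smaller inaccuracy: not every idempotent lies strictly $\mathcal{L}$-below the generating chain---the identity of the maximal subgroup sitting in $L_1$ is itself an idempotent in the top class.) The Rhodes--Tilson argument does not dissect a pre-given decomposition; it uses the Depth Decomposition / Type~II machinery to manufacture one adapted to the $\mathcal{L}$-chain, and that machinery \emph{is} the ``standard lower-bound'' technology you invoke in your last sentence. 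So your closing remark is correct in spirit, but it absorbs the entire proof rather than handling a residual technicality.
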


Now we prove \cite[Conjecture~6.51i (1)]{wildbook} for 2-vertex connected graphs. 

\begin{proof}[Proof of item~\ref{it:compl2vertex} of Theorem~\ref{thm:main}]
Let $\Gamma$ be a 2-vertex connected simple graph with $n \geq 2$ vertices. 
Let $K_n$ denote the flow semigroup of the complete graph on vertices $V$, 
where $\abs{V} = n$. 
Then $\cpx{S_{\Gamma}} \leq \cpx{K_n} = n-2$ by Lemma~\ref{lem:K_n}.
We proceed by induction on $n$.
If $n \leq 3$, then $\Gamma$ is a complete graph, 
and $\cpx{S_{\Gamma}}=n-2$ by Lemma~\ref{lem:K_n}. 
From now on we assume $n>3$ and $\Gamma = (V, E)$. 

\textbf{Case 1.} Assume first that $\Gamma$ is not a cycle. 
Let $(u,v)$ and $(x,y)$ be two disjoint edges in $\Gamma$.
 Let $G_1$ be the defect~1 group with defect set $V\setminus\halmaz{u}$ and idempotent $e_{uv}$ as its identity element. 
 Then $e_{uv}  \succeq_{\mathcal{L}} e_{xy}e_{uv} = e_{uv}e_{xy}$.  
 Let $T$ be $\left\langle G_1 \cup \halmaz{e_{uv}e_{xy}} \right\rangle$. 
 Since $G_1 \succeq_{\mathcal{L}} \halmaz{e_{uv}e_{xy}}$ is an ${\mathcal{L}}$-chain  in  $T$,  
  $T$ is a $T_1$-semigroup.   
  Furthermore, $T$ is noncombinatorial since $G_1$ is nontrivial. 
  Thus, by Lemma~\ref{lem:LB}
    \begin{equation}\label{eq:LB1}
  \cpx{T} \geq 1 + \cpx{EG(T)}.
  \end{equation}
 
   Let $\Gamma'$ be the complete graph on $V\setminus\halmaz{u}$.
  Let $a,b\in V\setminus\halmaz{u}$ be arbitrary distinct vertices.   
   By item~(\ref{it:2-vertex}) of Theorem~\ref{thm:main}, $G_1$ is 2-transitive.  
   Let $\pi \in G_1$ be such that $\pi(x)=a$ and $\pi(y)=b$.
   There is a positive integer $\omega>1$, with $\pi^{\omega}=e_{uv}$.
In particular, 
$e_{uv}$ commutes with $\pi$. 
   Observe that 
\begin{align*}
\pi^{\omega-1} e_{uv}e_{xy} \pi = e_{uv} \left( \pi^{\omega-1} e_{xy} \pi \right) &= e_{uv} e_{ab}, \text{ and thus} \\
\left(\pi^{\omega-1} e_{xy}e_{uv} \pi\right)\restriction_{V\setminus\halmaz{u}} = e_{ab}.
\end{align*}
That is, 
we obtain the generators $e_{ab}$ of $S_{\Gamma'}$ by restricting the idempotents $e_{uv}  e_{ab} \in T$ to $V \setminus \halmaz{u}$. 
Therefore, 
$S_{\Gamma'}$ is a homomorphic image of a subsemigroup of $EG(T)$, 
yielding 
\[
\cpx{EG(T)}\geq \cpx{S_{\Gamma'}}.
\]
By induction,    $\cpx{S_{\Gamma'}}=n-3$.
Applying \eqref{eq:LB1}, we obtain $\cpx{T}\geq n-2$.
Since $T$ is a subsemigroup of $S_\Gamma$, 
we obtain $\cpx{S_\Gamma} \geq \cpx{T}\geq n-2$.
  
  \textbf{Case 2.} Assume now that $\Gamma$ is the $n$-node cycle $(u, v_1,\dots,v_{n-1})$.
   Then $(u,v_1)$ and $(v_2,v_3)$ are disjoint edges.
   Let $G_1 \simeq Z_{n-1}$ be the defect~1 group with defect set $V\setminus\halmaz{u}$ and idempotent $e_{uv_1}$ as its identity element. 
   Let $\pi$ be a generator of $G_1$ with cycle structure $\left( v_1, \dots , v_{n-1}\right)$.
 Then $e_{uv_1}  \succeq_{\mathcal{L}} e_{v_2v_3}e_{uv_1} = e_{uv_1} e_{v_2v_3}$.  
 Let $T$ be $\left\langle G_1 \cup \halmaz{e_{uv_1}e_{v_2v_3}} \right\rangle$. 
 Since $G_1 \succeq_{\mathcal{L}} \halmaz{e_{uv_1}e_{v_2v_3}}$ is an ${\mathcal{L}}$-chain  in  $T$,  
  $T$ is a $T_1$-semigroup.   
  Furthermore, $T$ is noncombinatorial since $G_1$ is nontrivial. 
   Thus, by Lemma~\ref{lem:LB}
    \begin{equation}\label{eq:LB2}
  \cpx{T} \geq 1 + \cpx{EG(T)}.
  \end{equation} 

Let $\Gamma'$ be an $(n-1)$-node cycle with nodes $V \setminus \halmaz{u} = \halmaz{v_1, \dots, v_{n-1}}$.
Note 
that $e_{uv_1} = \pi^{n-1}$, 
and therefore $e_{u v_1}$ commutes with $\pi$. 
  Let $v_{i-1}, v_i, v_{i+1} \in V\setminus\halmaz{u}$ be three neighboring nodes in $\Gamma'$, where the
  indices are in $\halmaz{1,\dots,n-1}$ taken modulo $n-1$.   
    Observe that
\begin{align*}
\pi^{n-2} e_{uv_1}  e_{v_{i-1}v_i }\pi = e_{uv_1}  \left(\pi^{n-2} e_{v_{i-1}v_i } \pi \right) &= e_{uv_1}  e_{v_i v_{i+1}}, \text{ and thus} \\
\left(\pi^{n-2} e_{uv_1}  e_{v_{i-1}v_i }\pi\right)\restriction_{V\setminus\halmaz{u}} &= e_{v_i v_{i+1}}.
\end{align*}
That is, 
we obtain the generators $e_{v_i v_{i+1}}$ of $S_{\Gamma'}$ by restricting the idempotents $e_{u v_1}  e_{v_i v_{i+1}} \in T$ to $V \setminus \halmaz{u}$. 
Therefore, 
$S_{\Gamma'}$ is a homomorphic image of a subsemigroup of $EG(T)$, 
yielding 
\[
\cpx{EG(T)}\geq \cpx{S_{\Gamma'}}.
\]
By induction,    $\cpx{S_{\Gamma'}}=n-3$.
Applying \eqref{eq:LB2}, we obtain $\cpx{T}\geq n-2$.
Since $T$ is a subsemigroup of $S_\Gamma$, 
we have  $\cpx{S_\Gamma} \geq \cpx{T}\geq n-2$.
 \end{proof}
 
Note 
that by Lemma~\ref{lem:reverseedge} a strongly connected digraph has the same flow semigroup as the corresponding graph. 
Thus, 
item~\ref{it:compl2vertex} of Theorem~\ref{thm:main} proves Rhodes's conjecture \cite[Conjecture~6.51i (1)]{wildbook} for 2-vertex connected strongly connected digraphs, 
as well. 
The following lemma bounds the complexity in the remaining cases. 

\begin{lem}
Let $k$ be the smallest positive integer such that for a graph $\Gamma$ the flow semigroup $S_{\Gamma}$ has defect $k$ group $S_{n-k}$.
Then $\cpx{S_{\Gamma}}\geq n-1-k$.
\end{lem}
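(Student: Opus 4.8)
The plan is to exhibit the full transformation semigroup $F_{n-k}$ on the $n-k$ points of the permutation set as a homomorphic image of a subsemigroup of $S_\Gamma$. Since $\cpx{F_{n-k}} = n-k-1$ by Lemma~\ref{lem:F_n}, and since the complexity of a homomorphic image of a subsemigroup of $S_\Gamma$ never exceeds $\cpx{S_\Gamma}$, this at once yields $\cpx{S_\Gamma} \geq n-k-1 = n-1-k$. If $n-k \leq 1$ the inequality is vacuous, so I assume $n-k\geq 2$; I note that the minimality of $k$ is not actually used in the argument, it only ensures that the resulting bound is as strong as possible.

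First I would fix a convenient defect set. By Lemma~\ref{lem:stronglyconnected} the isomorphism type (as a permutation group) of the defect $k$ group is independent of the choice of $V_k$, so since $\Gamma$ is connected with $n-k \geq 2$ I may pick any edge $ab \in E$ and then choose $V_k$ with $\abs{V_k} = k$ and $a, b \notin V_k$; this is possible because $\abs{V \setminus \halmaz{a,b}} = n-2 \geq k$. By hypothesis the defect $k$ group acting on $V \setminus V_k$ is still $S_{n-k}$.

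Next I would set
\[
T = \halmazvonal{s \in S_\Gamma}{(V \setminus V_k)s \subseteq V \setminus V_k},
\]
which is a subsemigroup of $S_\Gamma$, and let $\rho \colon T \to F_{n-k}$ be the restriction map $\rho(s) = s\restriction_{V\setminus V_k}$, where $F_{n-k}$ is the full transformation semigroup on $V\setminus V_k$. A direct check using that elements of $T$ map $V\setminus V_k$ into itself shows that $\rho$ is a semigroup homomorphism. Finally I would identify the image: every transformation realizing the defect $k$ group lies in $T$, so $\rho(T) \supseteq S_{n-k}$; moreover $e_{ab} \in T$ (it only moves $a$, and $a,b \in V\setminus V_k$), and $\rho(e_{ab})$ is an idempotent of rank $n-k-1$, collapsing $a$ onto $b$ and fixing the remaining points. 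Invoking the classical fact that $F_m$ is generated by $S_m$ together with any single idempotent of rank $m-1$, I obtain $\rho(T) = F_{n-k}$, and hence $\cpx{S_\Gamma} \geq \cpx{F_{n-k}} = n-1-k$.

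The only genuinely non-formal input, and thus the main obstacle, is the generation statement $F_m = \left\langle S_m,\ \epsilon \right\rangle$ for a rank $m-1$ idempotent $\epsilon$; this is standard (the singular part of $F_m$ is idempotent-generated, and all rank $m-1$ idempotents form a single two-sided $S_m$-orbit), but I would cite or briefly justify it. The only other point requiring care is arranging that $V\setminus V_k$ contains an edge, which is exactly what the freedom granted by Lemma~\ref{lem:stronglyconnected} provides.
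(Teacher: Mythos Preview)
Your argument is correct and follows the same approach as the paper: choose a defect set $V_k$ avoiding the endpoints of some edge, observe that the defect $k$ group $S_{n-k}$ together with the elementary collapsing along that edge yields (after restriction to $V\setminus V_k$) the full transformation semigroup $F_{n-k}$, and invoke $\cpx{F_{n-k}}=n-k-1$. The paper compresses the restriction step and the generation fact $\langle S_m,\ \text{rank }m{-}1\text{ idempotent}\rangle = F_m$ into a single sentence, whereas you spell out the subsemigroup $T$, the homomorphism $\rho$, and the appeal to Lemma~\ref{lem:stronglyconnected} explicitly; the substance is identical.
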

\begin{proof}
Assume first $k=n-1$. Then the lemma holds trivially. 
From now on, assume $k\leq n-2$.  Let $uv$ be an edge in $\Gamma$. 
Let $V_k$ be an arbitrary $k$-element subset of the vertex set $V$ disjoint from $\halmaz{u,v}$.
Let $G_k$ be the defect $k$ group with defect set $V_k$.
Let $S$ be the subsemigroup of $S_{\Gamma}$ generated by  $G_k$ and $e_{uv}$.
As $G_k \simeq S_{n-k}$, we have that $S$ is the semigroup of all transformations on $V\setminus V_k$.
Hence, $\cpx{S}=\cpx{F_{n-k}}=n-k-1$ by Lemma~\ref{lem:F_n}.  
Whence,
$\cpx{S_{\Gamma}} \geq \cpx{S}= n-k-1$.
\end{proof}

By Theorem~\ref{thm:defectk}, 
it immediately follows that the complexity of the flow semigroup of a 2-edge connected graph $\Gamma$ is at least $n-3$.
Furthermore, 
$\cpx{S_{\Gamma}} \leq \cpx{K_n} = n-2$ by Lemma~\ref{lem:K_n}.
This finishes the proof of item~\ref{it:compl2edge} of Theorem~\ref{thm:main}.

\bibliographystyle{abbrv}
\bibliography{../Bib/graphsemi}

\end{document}